\renewcommand{\le}{\leqslant}
\renewcommand{\leq}{\leqslant}
\renewcommand{\ge}{\geqslant}
\renewcommand{\geq}{\geqslant}
\newcommand{\ass}{\operatorname{Ass}}
\newcommand{\ann}{\operatorname{Ann}}
\newcommand{\coht}{\operatorname{coht}}
\newcommand{\supp}{\operatorname{Supp}}
\def\littleo{\operatorname{o}}
\renewcommand{\leq}{\leqslant}
\renewcommand{\geq}{\geqslant}
\newcommand{\nfix}{\mathsf{F}}
\newcommand{\entropy}{\mathsf{h}}
\newcommand{\ugr}{\mathsf{g}}
\newtheorem{theorem}{Theorem}[section]
\newtheorem{lemma}[theorem]{Lemma}
\newtheorem{corollary}[theorem]{Corollary}
\newtheorem{proposition}[theorem]{Proposition}
\theoremstyle{definition}
\newtheorem{example}[theorem]{Example}
\newtheorem{remark}[theorem]{Remark}
\title[The dynamical zeta function]{The dynamical zeta function for commuting automorphisms of zero-dimensional groups}
\author{Richard Miles and Thomas Ward}
\email{richard.miles@math.uu.se}
\email{t.b.ward@durham.ac.uk}
\subjclass[2010]{37A45, 37B05, 37C25, 37C30, 37C85, 37P99}
\thanks{The authors gratefully acknowledge the
support of London Mathematical Society
Scheme~IV grant number~$41352$.}
\begin{document}

\begin{abstract}
For a~$\mathbb{Z}^d$-action~$\alpha$ by commuting homeomorphisms of a compact metric space, Lind introduced a dynamical zeta function that generalizes the dynamical zeta function of a single transformation. In this article, we investigate this function when~$\alpha$ is generated by continuous automorphisms of a compact abelian zero-dimensional group. We address Lind's conjecture concerning the existence of a natural boundary for the zeta function and prove this for two significant classes of actions, including both zero entropy and positive entropy examples. The finer structure of the periodic point counting function is also examined and, in the zero entropy case, we
show how this may be severely restricted for subgroups of prime index in~$\mathbb{Z}^d$. We also consider a related open problem concerning the appearance of a natural boundary for the dynamical zeta function of a single automorphism, giving further weight to the P{\'o}lya--Carlson dichotomy proposed by Bell and the authors.
\end{abstract}

\maketitle


\section{Introduction}

For a~$\mathbb{Z}^d$-action~$\alpha$ generated by~$d$ commuting homeomorphisms of a compact metric space~$X$, Lind~\cite{MR1411232} introduced a dynamical zeta function that generalizes the well-known dynamical zeta function of a single transformation introduced by Artin and Mazur~\cite{MR0176482}.
Lind's dynamical zeta function is defined as follows.
Denote the set of finite index subgroups of~$\mathbb{Z}^d$ by~$\mathcal{L}_d$ and for any~$\Lambda\in\mathcal{L}_d$, let~$[\Lambda]=|\mathbb{Z}^d/\Lambda|$. For any~$\mathbf{n}\in\mathbb{Z}^d$, let~$\alpha^{\mathbf{n}}$ denote the element of the action corresponding to~$\mathbf{n}$ and let
$\nfix_\alpha(\Lambda)$ denote the cardinality of the set of points~$x\in X$ with~$\alpha^{\mathbf{n}}(x)=x$ for all~$\mathbf{n}\in \Lambda$. The~\emph{dynamical zeta function} of~$\alpha$ is defined as a formal power series by
\[
\zeta_\alpha(z)
=
\exp\left(
\sum_{\Lambda\in\mathcal{L}_d}
\frac{\nfix_\alpha(\Lambda)}{[\Lambda]}z^{[\Lambda]}
\right).
\]
When~$\alpha$ is generated by a single transformation
(that is, a~$\mathbb{Z}$-action),~$\zeta_\alpha$ agrees with the dynamical zeta function considered by Artin and Mazur.

The number of subgroups
of index~$n$ in~$\mathbb{Z}^d$ is polynomially bounded in~$n$ and so, as observed by Lind~\cite[Th.~5.3]{MR1411232}, it follows from the usual Hadamard formula that~$\zeta_\alpha$ has radius of convergence~$e^{-\ugr(\alpha)}$, where
\[
\ugr(\alpha)=\limsup_{[\Lambda]\rightarrow\infty}
\frac{1}{[\Lambda]}\log\nfix_\alpha(\Lambda)
\]
is the \emph{upper growth rate of periodic points}, which may or may not coincide with the topological entropy~$\entropy(\alpha)$.
Lind conjectures~\cite[Sec.~7]{MR1411232} that for a~$\mathbb{Z}^d$-action with~$d\geqslant 2$, the circle~$|z|=e^{-\mathsf{h}(\alpha)}$ is a natural boundary for~$\zeta_\alpha$ and that~$\zeta_\alpha$ is meromorphic inside this circle. Part of this complexity comes from the subgroup structure of the acting
group: as Lind points out, for~$d\geqslant2$ the zeta function
of the trivial~$\mathbb{Z}^d$-action on a point has a natural boundary.

An important class of~$\mathbb{Z}^d$-actions
particularly amenable to study are the algebraic ones (see the monograph of Schmidt~\cite{MR1345152}
for an overview), that is, those
generated by commuting automorphisms of compact
metric abelian groups. We assume throughout that the group~$X$ under consideration is metrizable. When~$\alpha$ is mixing and~$X$ is connected with finite topological dimension, the first author has shown that Lind's conjecture holds~\cite{MR3336617}. Motivated
in part by examples such as
Ledrappier's~$\mathbb{Z}^2$-action~\cite{MR512106}
that do not fall into this class, our purpose here is to investigate~$\zeta_\alpha$ for algebraic systems where~$X$ is zero-dimensional (equivalently, $X$ is totally disconnected). Notably, Ledrappier's example plays an important role in Lind's initial investigation~\cite[Ex.~3.4]{MR512106} and in many subsequent studies of algebraic ~$\mathbb{Z}^d$-actions. For example, it is a prototypical example of a~$\mathbb{Z}^d$-action of a compact abelian zero-dimensional group where every element of the action has finite entropy, that is, an~\emph{entropy rank one action}~\cite{MR2031042}. If~$\alpha$ is an entropy rank one action, then clearly~$\entropy(\alpha)=0$ if~$d\ge2$.

As well as having the motivation of Lind's conjecture, we also consider a separate problem that concerns the dynamical zeta function for a~$\mathbb{Z}$-action generated by a single automorphism. In this case, if~$\zeta_\alpha$ is not rational, there is strong evidence to suggest that~$\zeta_\alpha$ admits a natural boundary. For automorphisms of connected groups this problem is considered in~\cite{MR3217030}, and the conjectured
dichotomy discussed there is further supported by the following.

\begin{theorem}\label{single_automorphism_theorem}
Let~$X$ be a compact abelian zero-dimensional group and suppose~$\alpha$ is generated by a single ergodic automorphism of~$X$ with finite entropy. If~$\zeta_\alpha$ has radius of convergence~$e^{-\entropy(\alpha)}$, then either
\[
\zeta_\alpha(z)=(1-e^{\entropy(\alpha)} z)^{-1}
\]
or the circle~$|z|=e^{-\entropy(\alpha)}$ is a natural boundary for the function~$\zeta_\alpha$.
\end{theorem}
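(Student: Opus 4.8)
The plan is to reduce the statement to a structure theorem for the periodic point counting function of a single ergodic automorphism of a zero-dimensional group, and then invoke a Pólya–Carlson type dichotomy for the resulting power series. First I would recall that for an ergodic automorphism $\alpha$ of a compact abelian zero-dimensional group $X$ with finite entropy, Pontryagin duality turns $X$ into a module $M=\widehat{X}$ over $\mathbb{Z}[t^{\pm 1}]$, and ergodicity together with finite entropy places strong constraints on $M$: the action of $t$ has no eigenvalues that are roots of unity, and the entropy is the logarithmic Mahler measure of a suitable characteristic polynomial. In the zero-dimensional setting, $M$ is (up to finite pieces) a module over $\mathbb{F}_p[t^{\pm 1}]$ for a single prime $p$, so the periodic point data is governed by $p$-adic rather than archimedean information. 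The key point is that $\nfix_\alpha(n)$, the number of points fixed by $\alpha^n$, has the shape $\nfix_\alpha(n)=\prod_{i}\lvert \det(\alpha^n - \mathrm{Id}\mid V_i)\rvert_{p_i}^{-1}$ type expressions, which for a zero-dimensional group reduce to counting solutions of polynomial congruences and ultimately take the form $\nfix_\alpha(n)=c\cdot p^{a_n}$ (at least along the relevant cofinite set of $n$), where $(a_n)$ is an integer sequence with $\limsup a_n/n = \log_p$ of the growth rate.

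The second step is to analyze the generating function $\sum_n \frac{\nfix_\alpha(n)}{n}z^n$ under the hypothesis that its radius of convergence equals $e^{-\entropy(\alpha)}$. I would show that, after the $p$-adic analysis, the sequence $\nfix_\alpha(n)$ is eventually given by a closed form of the type $\nfix_\alpha(n) = \sum_j b_j \theta_j^n$ where the $\theta_j$ are algebraic integers (coming from Frobenius eigenvalues on $\mathbb{F}_p$-vector spaces), possibly multiplied by a periodic function of $n$ — equivalently, $\nfix_\alpha$ satisfies a linear recurrence. When such a sequence gives $\zeta_\alpha$ a finite nonzero radius of convergence equal to $e^{-\entropy(\alpha)}$, the logarithm $\sum_n \frac{\nfix_\alpha(n)}{n}z^n$ is, up to elementary terms, a $\mathbb{Z}$-linear combination of functions $-\log(1-\theta_j z)$, so $\zeta_\alpha$ is a finite product of terms $(1-\theta_j z)^{\pm m_j}$ — hence rational. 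One then forces rationality to collapse: the only way a rational $\zeta_\alpha$ of an ergodic finite-entropy automorphism can have its radius of convergence pinned exactly at $e^{-\entropy(\alpha)}$ with no spurious poles or zeros inside is $\zeta_\alpha(z)=(1-e^{\entropy(\alpha)}z)^{-1}$, using that $e^{\entropy(\alpha)}$ must itself be (a power of) $p$ here and that ergodicity rules out the competing factors. If the sequence $\nfix_\alpha$ is \emph{not} eventually given by such a finite closed form — i.e. the $p$-adic module data is genuinely infinite — then the power series has integer coefficients, finite radius of convergence, and (by the arithmetic of the $p$-adic counting) fails the rationality criterion, so the Pólya–Carlson theorem (in the form: a power series with integer coefficients and radius of convergence $1$ is either rational or has the unit circle as natural boundary, suitably rescaled) yields the natural boundary on $|z|=e^{-\entropy(\alpha)}$.

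The main obstacle I anticipate is twofold. First, making the reduction to a clean closed form for $\nfix_\alpha(n)$ is delicate because a general finite-entropy ergodic module over $\mathbb{F}_p[t^{\pm1}]$ need not be finitely generated as an $\mathbb{F}_p[t^{\pm1}]$-module, so $\nfix_\alpha(n)$ need not literally satisfy a linear recurrence; one must instead argue that \emph{under the radius-of-convergence hypothesis} the relevant contributions are controlled and the dichotomy (rational vs. natural boundary) still applies, presumably by an argument showing that any deviation from the recurrent case injects enough ``extra'' growth along a sparse set of $n$ to either break the radius hypothesis or produce the natural boundary directly. Second, applying Pólya–Carlson requires the coefficients of $\zeta_\alpha$ (not just of $\log\zeta_\alpha$) to be integers and the series to be $\mathbb{Z}$-valued after rescaling by $e^{-\entropy(\alpha)}$; verifying integrality of the exponentiated series and that the rescaling lands one exactly in the hypotheses of Pólya–Carlson (radius exactly $1$, integer coefficients) is the technical heart, and I would handle it by combining the product formula $\nfix_\alpha(n)=\prod_p \lvert\cdots\rvert_p^{-1}$ with the observation that in the zero-dimensional ergodic case only finitely many primes contribute and the archimedean contribution is trivial, so $e^{\entropy(\alpha)}$ is an integer and the rescaled series is $\mathbb{Z}$-integral.
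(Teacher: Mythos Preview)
Your approach has a genuine gap at its technical heart, and the paper in fact takes a different route precisely because the P\'olya--Carlson strategy cannot be made to work here.

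The central difficulty is the integrality step you flag at the end. Write~$\nfix_\alpha(n)=e^{\entropy n}a_n$ as in the paper, so that~$a_n=\prod_i\prod_{v\in S_i'}|t^n-1|_v$ is a product of non-archimedean absolute values at places where~$|t|_v\le1$. These~$a_n$ are rational numbers with~$0<a_n\le1$; they are reciprocals of integers, not integers. Consequently neither~$F(z)=\sum a_nz^n$ nor~$\zeta_\alpha(e^{-\entropy}z)$ has integer coefficients, and the P\'olya--Carlson theorem simply does not apply after your rescaling. There is no evident way to clear denominators uniformly, since the denominators of the~$a_n$ are unbounded (indeed, that is exactly what drives the natural boundary). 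Your first pillar is also unreliable: the sequence~$\nfix_\alpha(n)$ typically does \emph{not} satisfy a linear recurrence. Already in the prototype example~$\nfix_\alpha(n)=p^{\,n-|n|_p^{-1}}$, and~$n\mapsto|n|_p^{-1}$ is not eventually periodic along any arithmetic progression whose common difference is divisible by~$p$, so no generalised power-sum formula~$\sum_jb_j\theta_j^n$ exists.

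The paper's proof avoids both problems by using \emph{overconvergence} instead of P\'olya--Carlson. Starting from the same decomposition~$\nfix_\alpha(n)=e^{\entropy n}a_n$ and the logarithmic derivative relation~$F(e^{\entropy}z)=z\zeta_\alpha'(z)/\zeta_\alpha(z)$, one observes that either every~$S_i'$ is the trivial set~$\{(t)\}$, in which case~$a_n\equiv1$ and~$\zeta_\alpha(z)=(1-e^{\entropy}z)^{-1}$, or there is a place~$w$ with~$|t|_w=1$ and residue degree~$d_w\ge1$. Along the sparse sequence~$n_k=\ell_wp^k$ one has~$|t^{n_k}-1|_w=p^{-d_wp^k}$, hence~$a_{n_k}^{1/n_k}\to p^{-d_w/\ell_w}<1$. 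This means~$F$ has a subsequence of partial sums converging uniformly on a disk strictly larger than the unit disk, i.e.\ $F$ is overconvergent; by the Ostrowski--Hadamard gap theory this forces the unit circle to be a natural boundary for~$F$, and hence~$|z|=e^{-\entropy}$ is a natural boundary for~$\zeta_\alpha$. The key mechanism is thus the extreme $p$-adic shrinking of~$a_n$ along a lacunary subsequence, not any integrality or rationality criterion.
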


In~\cite{MR1702897}, the second author considered a natural
uncountable
family of ergodic automorphisms
of zero-dimensional groups parameterised
by a probability space,
and showed that an irrational zeta function should be expected almost surely for an automorphism in this family. Furthermore, the first author shows in~\cite{MR2441142} that any ergodic finite entropy  automorphism of a compact abelian zero-dimensional group has a sequence of periodic point counts that is simply a finite product of the sequences dealt with in~\cite{MR1702897}. It is also shown in~\cite{MR1702897} that~$\zeta_\alpha$ has radius of convergence~$e^{-\entropy(\alpha)}$ almost surely, so from this perspective Theorem~\ref{single_automorphism_theorem} points to the occurrence of a natural boundary for~$\zeta_\alpha$ as typical behaviour, even for a single automorphism.

A crucial step in the proof of the main
result in~\cite{MR3336617} is the
beautiful
theorem of P{\'o}lya and Carlson which states that a power series with integer coefficients and radius of convergence~$1$ is either rational or has the
unit circle as a natural boundary (see~\cite{MR1544479},~\cite{MR1512473} and~\cite{MR2376066}). Using this result, it may be shown that if~$\zeta_\alpha$ has radius of convergence 1 and~$d\geqslant 2$, then~$\zeta_\alpha$ admits the unit circle as a natural boundary (see~Proposition~\ref{pc_route_to_natural_boundary}).
In using the P{\'o}lya--Carlson theorem in these
contexts both the integrality of the coefficients
and the radius of convergence create problems,
and some argument is needed.
Consequently, Lind's conjecture is shown to hold in
the setting of~\cite{MR3336617} by demonstrating that the upper growth rate of periodic points is zero. This is achieved with the help of an upper estimate for~$\nfix_\alpha(\Lambda)$ obtained using techniques from~\cite{MR3074380} and a theorem of Corvaja and Zannier~\cite{MR2130274} concerning bounds on quantities related to greatest common divisors for rings of~$S$-integers. In the setting of this article, similar results are unavailable (see~\cite{MR3082249} and~\cite{MR2052363}). However, by further developing existing periodic point counting techniques for entropy rank one actions~\cite{MR2308145} and by appealing more directly to the geometry of~$\mathcal{L}_d$, we adopt an alternative approach to show that~$\ugr(\alpha)=0$ for all mixing entropy rank one~$\mathbb{Z}^d$-actions by automorphisms of compact abelian zero-dimensional groups, with~$d\geqslant 2$.
Thus, we are able to prove the following.

\begin{theorem}\label{main_result_natural_boundary_entropy_rank_one}
Let~$\alpha$ be a mixing entropy rank one~$\mathbb{Z}^d$-action by automorphisms of a compact abelian zero-dimensional group~$X$, with~$d\geqslant 2$. Then~$\zeta_\alpha$ has radius of convergence~$1$ and the unit circle is a natural boundary for the function.
\end{theorem}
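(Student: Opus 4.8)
The plan is to reduce the whole statement to the single assertion that the upper growth rate of periodic points vanishes, $\ugr(\alpha)=0$. Granting this, the Hadamard-type formula recalled in the introduction shows that $\zeta_\alpha$ has radius of convergence $e^{-\ugr(\alpha)}=1$, and since $d\geqslant2$ the conclusion that the unit circle is a natural boundary then follows at once from Proposition~\ref{pc_route_to_natural_boundary}. All of the work therefore lies in the estimate $\ugr(\alpha)=0$, and the strategy for it is to prove a periodic point bound of the shape
\[
\nfix_\alpha(\Lambda)\leqslant Q^{\,\lambda(\Lambda)}
\]
valid for every finite-index subgroup $\Lambda\leqslant\mathbb{Z}^d$, where $Q=Q(\alpha)$ is a constant independent of $\Lambda$ and $\lambda(\Lambda)=\min\{\|\mathbf{n}\|_\infty:\mathbf{n}\in\Lambda\setminus\{\mathbf{0}\}\}$, and then to feed in Minkowski's theorem, which gives $\lambda(\Lambda)\leqslant[\Lambda]^{1/d}$.

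For the periodic point bound I would pass to the Pontryagin dual $M=\widehat{X}$, a Noetherian torsion module over $R=\mathbb{Z}[\mathbb{Z}^d]\cong\mathbb{Z}[u_1^{\pm1},\dots,u_d^{\pm1}]$, to which the structure theory of entropy rank one actions~\cite{MR2031042} and the periodic point techniques of~\cite{MR2308145} apply. Writing $I_\Lambda=(u^{\mathbf{m}}-1:\mathbf{m}\in\Lambda)$, one has $\nfix_\alpha(\Lambda)=|M/I_\Lambda M|$ by duality, a quantity the mixing hypothesis will force to be finite, and choosing a prime filtration of $M$ with cyclic quotients $R/\mathfrak{p}$ reduces matters, by right-exactness of $-\otimes_R R/I_\Lambda$, to bounding $|(R/\mathfrak{p})/I_\Lambda(R/\mathfrak{p})|$ for each $\mathfrak{p}$. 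Every such $\mathfrak{p}$ contains a rational prime $p$, so $A:=R/\mathfrak{p}$ is a finitely generated $\mathbb{F}_p$-algebra which is a domain; entropy rank one forces $\dim A\leqslant1$, and when $\mathfrak{p}$ is an associated prime of $M$ the mixing hypothesis (Schmidt's criterion~\cite{MR1345152}) excludes $u^{\mathbf{n}}-1\in\mathfrak{p}$ for $\mathbf{n}\neq\mathbf{0}$, which rules out $\dim A=0$ and realises $A$ as an order in a global function field $\mathbb{K}$ over a finite field with each $u_j$ a unit of $A$; any other $\mathfrak{p}$ in the filtration strictly contains an associated prime, so there $\dim A=0$ and the contribution to $\nfix_\alpha(\Lambda)$ is the bounded quantity $|A|$.

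Now fix a nonzero $\mathbf{n}\in\Lambda$. For a one-dimensional quotient, $|(R/\mathfrak{p})/I_\Lambda(R/\mathfrak{p})|\leqslant|A/(u^{\mathbf{n}}-1)A|$ because $I_\Lambda\supseteq(u^{\mathbf{n}}-1)$, and on a smooth projective model of $\mathbb{K}$ the $S$-unit $u^{\mathbf{n}}$, where $S$ is the finite set of places at which some $u_j$ has a zero or a pole, is either a nonidentity constant — in which case $u^{\mathbf{n}}-1$ is a unit and the quotient is trivial — or a nonconstant function whose divisor of zeros has the same degree as its polar divisor; since the poles of $u^{\mathbf{n}}$ are supported on $S$, a term-by-term estimate in the valuations $v(u_j)$ with $v\in S$ shows this degree is at most $C\,\|\mathbf{n}\|_\infty$ for a constant $C$ depending only on $A$. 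Hence $|A/(u^{\mathbf{n}}-1)A|\leqslant p^{\,C\|\mathbf{n}\|_\infty}$, up to a bounded correction arising from the finitely many singular points of $\spec A$, and multiplying over the filtration yields $\nfix_\alpha(\Lambda)\leqslant Q^{\|\mathbf{n}\|_\infty}$ for every nonzero $\mathbf{n}\in\Lambda$, with $Q=Q(\alpha)$. Taking $\mathbf{n}$ to realise $\lambda(\Lambda)$ and applying Minkowski's convex body theorem to cubes gives $\lambda(\Lambda)\leqslant[\Lambda]^{1/d}$, so
\[
\frac{\log\nfix_\alpha(\Lambda)}{[\Lambda]}\leqslant\frac{\lambda(\Lambda)\log Q}{[\Lambda]}\leqslant\frac{\log Q}{[\Lambda]^{\,1-1/d}}\longrightarrow0\quad\text{as }[\Lambda]\to\infty,
\]
since $d\geqslant2$; as $\nfix_\alpha(\Lambda)\geqslant1$ always, this forces $\ugr(\alpha)=0$, and the theorem follows.

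I expect the main obstacle to be the content of the third paragraph: extracting from the theory of entropy rank one actions, and sharpening it where necessary, the precise statement that after passing to the prime filtration a single equation $u^{\mathbf{n}}=1$ already controls $\nfix_\alpha(\Lambda)$ by an explicit constant multiple of $\|\mathbf{n}\|_\infty$, with the constant uniform in $\Lambda$; and, within that, carefully handling the places at infinity, the possible non-reducedness of $M$ and the singularities of $\spec A$, and the correct use of mixing to exclude the degenerate possibility $u^{\mathbf{n}}\equiv1$ when $\mathbf{n}$ is a shortest vector of $\Lambda$. Once this bound is available the geometry-of-numbers input is soft and the remainder of the argument is formal.
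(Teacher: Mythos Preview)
Your overall strategy matches the paper's: establish $\ugr(\alpha)=0$ via a bound $\nfix_\alpha(\Lambda)\leqslant Q^{[\Lambda]^{1/d}}$ obtained by combining a single-element estimate $\nfix_\alpha(\mathbf{n})\leqslant Q^{\|\mathbf{n}\|}$ with Minkowski's theorem, and then invoke Proposition~\ref{pc_route_to_natural_boundary}. The paper carries this out as Theorem~\ref{cos_if_i_win_ill_be_so_confused} and Corollary~\ref{minkowski_bound}, and your sketch of the single-element bound via places of the function field $\mathbb{K}(\mathfrak{p})$ is essentially the paper's argument in different language.

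There is, however, a genuine gap: you assert that the dual module $M$ is Noetherian, but the theorem does not assume this (contrast Theorems~\ref{main_result_restricted_on_primes} and~\ref{main_result_natural_boundary_z2}, which do carry that hypothesis). Without Noetherianity there is no finite prime filtration to iterate over, and your product-of-bounds argument breaks down. The paper supplies the missing reduction as Lemma~\ref{i_dont_mind_if_i_lose}: one constructs a Noetherian submodule $L\subset M$ such that $\ass(M/L)$ consists entirely of maximal ideals, and then shows that the (possibly infinite) tower of finite quotients sitting above $L$ cannot push $|M/(u^{\mathbf{n}}-1)M|$ beyond $|L/(u^{\mathbf{n}}-1)L|$. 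The existence of such an $L$ is not formal---the key observation is that each coheight-one quotient $R_d/\mathfrak{p}$ contributes entropy at least $\log 2$ to any ergodic element, so the finite-entropy hypothesis caps how many such quotients can occur before only maximal primes remain. You flag non-reducedness of $M$ and singularities of $\spec A$ as the delicate points, but those are routine here; the Noetherian reduction is the substantive step you are missing.
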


In addition to showing that the upper growth rate of periodic points is zero for entropy rank one actions, we also investigate the finer structure of the periodic point counting function~$\nfix_\alpha:\mathcal{L}_d\rightarrow\mathbb{N}$, with a view to identifying a significant class of subgroups upon which this function is in fact bounded. This emerged as a possibility
from examples such as Ledrappier's. The following result applies to algebraic~$\mathbb{Z}^d$-actions that are Noetherian, which in our
zero-dimensional setting equates to the dynamical property of expansiveness~\cite{MR1036904} (see Section~\ref{periodic_points_section} for a formal definition).

\begin{theorem}\label{main_result_restricted_on_primes}
Let~$\alpha$ be a mixing Noetherian entropy rank one~$\mathbb{Z}^d$-action by automorphisms of a compact abelian zero-dimensional group~$X$, with~$d\geqslant 2$. Then there is a set of indices~$Q$ of positive density in the rational primes such that~$\{\nfix_\alpha(\Lambda):[\Lambda]\in Q\}$ is a finite set. When~$d\geqslant 3$, such a set~$Q$ may be found with density~$1$.
\end{theorem}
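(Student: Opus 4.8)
The plan is to pass, via a prime filtration of the Pontryagin dual module, to the case of an irreducible one-dimensional curve over~$\mathbb{F}_p$, to extract from this a closed formula for~$\nfix_\alpha(\Lambda)$ when the index~$[\Lambda]$ is a prime~$\ell\neq p$, and then to bound the resulting intersection numbers uniformly in the subgroup provided~$\ell$ avoids a sparse set of primes. Write~$R=\mathbb{Z}[t_1^{\pm1},\dots,t_d^{\pm1}]$ and let~$M=\widehat{X}$ be the~$R$-module dual to~$\alpha$, so that~$\nfix_\alpha(\Lambda)=|M/I_\Lambda M|$ where~$I_\Lambda$ is the ideal of~$R$ generated by~$\{t^{\mathbf{n}}-1:\mathbf{n}\in\Lambda\}$. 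Expansiveness means~$M$ is finitely generated over~$R$, and~$X$ being zero-dimensional means~$M$ is a torsion abelian group; a prime filtration and sub-multiplicativity of~$\nfix_\alpha$ along short exact sequences then bound~$\nfix_\alpha(\Lambda)$ by a fixed power product of the quantities~$|(R/\mathfrak{q})/I_\Lambda|$ over the finitely many associated primes~$\mathfrak{q}$ of~$M$, each of which contains a rational prime. Mixing forbids associated primes of coheight~$0$ and entropy rank one forbids coheight~$\geq 2$, so it suffices to bound~$|(R/\mathfrak{q})/I_\Lambda|$ on a large set of prime indices for a single coheight-one prime~$\mathfrak{q}=\mathfrak{p}$ containing~$p$, the induced~$\mathbb{Z}^d$-action on~$\widehat{R/\mathfrak{p}}$ being mixing.

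Geometrically~$A=\mathbb{F}_p[t_1^{\pm1},\dots,t_d^{\pm1}]/\overline{\mathfrak{p}}$ is the coordinate ring of an irreducible affine curve~$V(\mathfrak{p})\subseteq\mathbb{G}_m^d$, and mixing says precisely that no monomial~$\bar{t}^{\mathbf{n}}$ with~$\mathbf{n}\neq 0$ is a root of unity in the function field of~$A$, equivalently that~$V(\mathfrak{p})$ lies in no translate of a proper subtorus. For a prime~$\ell\neq p$, a subgroup with~$[\Lambda]=\ell$ is the kernel of a surjection~$\mathbf{n}\mapsto\mathbf{v}\cdot\mathbf{n}\bmod\ell$ for some~$\mathbf{v}\in\mathbb{F}_\ell^d\setminus\{0\}$, and~$R/I_\Lambda\cong\mathbb{F}_p[u]/(u^\ell-1)$ is \'etale over~$\mathbb{F}_p$. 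Feeding this into the valuation-theoretic periodic-point formalism for entropy rank one actions from~\cite{MR2308145}, one obtains a factorisation of~$\nfix_\alpha(\Lambda)$ into local contributions: the finitely many places of~$A$ at which every~$\bar{t}_i\equiv 1$ contribute a quantity~$B_0=B_0(\mathfrak{p})$ which, because~$\ell\neq p$ makes these local extensions tame, is independent of~$\ell$ and of~$\mathbf{v}$; and the remaining places contribute~$p^{N(\mathbf{v},\ell)}$, where
\[
N(\mathbf{v},\ell)=\#\bigl(V(\mathfrak{p})(\overline{\mathbb{F}}_p)\cap C_{\mathbf{v},\ell}\bigr),\qquad
C_{\mathbf{v},\ell}=\bigl\langle(\zeta_\ell^{v_1},\dots,\zeta_\ell^{v_d})\bigr\rangle
\]
counts the geometric points in which~$V(\mathfrak{p})$ meets the cyclic subgroup~$C_{\mathbf{v},\ell}$ of~$\mathbb{G}_m^d$ of order~$\ell$. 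Since~$\nfix_\alpha(\Lambda)\in\mathbb{N}$, the assertion to be proved is the boundedness of~$N(\mathbf{v},\ell)$ over all~$\mathbf{v}$ as~$\ell$ ranges over~$Q$.

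To bound~$N(\mathbf{v},\ell)$ I split on~$\mathbf{v}$. Fix a finite generating set of~$\mathfrak{p}$, and let~$\mathcal{U}$ be the finite set consisting of the standard basis vectors together with all differences~$\mathbf{u}-\mathbf{u}'$ of distinct exponents occurring in these generators. If~$\mathbf{v}\cdot\mathbf{m}\equiv 0\bmod\ell$ for some~$\mathbf{m}\in\mathcal{U}$---in particular whenever some coordinate of~$\mathbf{v}$ is~$\equiv 0$---then~$C_{\mathbf{v},\ell}$ lies in the fixed proper subtorus~$\{y^{\mathbf{m}}=1\}$; by mixing this subtorus is not contained in~$V(\mathfrak{p})$, so a degree (B\'ezout) bound controls~$N(\mathbf{v},\ell)$ by a constant depending only on~$\mathfrak{p}$, for every~$\ell$. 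For the remaining directions the exponents of any chosen generator~$f\in\mathfrak{p}$ are pairwise distinct modulo~$\ell$, so the restriction of~$f$ to~$C_{\mathbf{v},\ell}$ is a Laurent polynomial in~$\zeta_\ell$ with exactly~$|\supp(f)|$ nonzero terms, and~$N(\mathbf{v},\ell)$ is at most the number of its roots that are~$\ell$th roots of unity; equivalently,~$N(\mathbf{v},\ell)$ being large forces~$V(\mathfrak{p})$ to carry a point of exact order~$\ell$. Over~$\overline{\mathbb{Q}}$ such points do not occur for large~$\ell$ (finiteness of torsion on subvarieties of~$\mathbb{G}_m^d$ that are not subtorus cosets, via the classification of vanishing sums of roots of unity), so in characteristic~$p$ the failure is governed by the divisibility by~$p$ of certain explicit resultant integers attached to~$\mathfrak{p}$. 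When~$d=2$, so~$V(\mathfrak{p})$ is a hypersurface, such divisibilities do occur infinitely often, but only along a set of primes of density strictly less than~$1$, so its complement~$Q$ has positive density. When~$d\geq 3$ the curve~$V(\mathfrak{p})$ has codimension at least two, the relevant incidence is cut out by at least two independent conditions on a one-dimensional parameter space, and a counting argument exploiting the geometry of~$\mathcal{L}_d$---bounding the successive minima of the lattices~$\Lambda$ and hence the arithmetic complexity of the smallest subtorus through~$C_{\mathbf{v},\ell}$---shows that a cyclic subgroup of prime order~$\ell$ meets~$V(\mathfrak{p})$ in more than a bounded number of points only for~$\ell$ in a set of density zero; taking~$Q$ to be the complement gives density~$1$.

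The main obstacle is this last step: establishing the dichotomy between the hypersurface case~$d=2$ and the higher-codimension case~$d\geq 3$ in a form sharp enough to control, uniformly over all prime-index subgroups simultaneously, how often a cyclic subgroup can be ``aligned'' with the fixed curve~$V(\mathfrak{p})$---and in particular proving the density-zero estimate on the exceptional primes when~$d\geq 3$. By contrast the module-theoretic reduction, the \'etale description of~$R/I_\Lambda$ for prime index, the tameness making~$B_0$ constant, and the B\'ezout bound for directions trapped in a fixed subtorus are all routine once the entropy rank one periodic-point machinery of~\cite{MR2308145} is available.
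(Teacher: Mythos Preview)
Your reduction to a single coheight-one prime~$\mathfrak{p}$ and the valuation-theoretic factorisation~$\nfix_\alpha(\Lambda)=B_0\cdot p^{N(\mathbf{v},\ell)}$ are essentially what the paper does, and your interpretation of~$N(\mathbf{v},\ell)$ as a count of geometric intersection points of the curve~$V(\mathfrak{p})$ with a cyclic~$\ell$-subgroup of~$\mathbb{G}_m^d$ is a correct reformulation of the paper's Lemma~\ref{I_caught_sleep}. The gap is in the final step, which you rightly flag as the obstacle but attempt to handle with the wrong tools.

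The appeal to torsion finiteness over~$\overline{\mathbb{Q}}$ is a red herring: in characteristic~$p$ every~$\overline{\mathbb{F}}_p$-point of~$V(\mathfrak{p})$ is torsion, so no statement about ``divisibility by~$p$ of resultant integers'' will single out a sparse set of bad primes~$\ell$. Likewise, the distinction you draw between~$d=2$ (hypersurface) and~$d\geqslant3$ (higher codimension) is not what drives the density dichotomy:~$V(\mathfrak{p})$ is always one-dimensional, and nothing in the intersection theory of~$V(\mathfrak{p})$ with~$C_{\mathbf{v},\ell}$ changes qualitatively with~$d$.

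The paper's mechanism combines two ingredients you do not assemble. First, the Minkowski bound (Corollary~\ref{minkowski_bound}) gives~$\nfix_\alpha(\Lambda)\leqslant C^{\sqrt{d}\,\ell^{1/d}}$, hence~$N(\mathbf{v},\ell)\ll\ell^{1/d}$, uniformly in~$\mathbf{v}$. Second, any geometric point of~$C_{\mathbf{v},\ell}\cap V(\mathfrak{p})$ away from the~$(t-1)$-locus is defined over~$\mathbb{F}_p(\zeta_\ell)=\mathbb{F}_{p^{m_p(\ell)}}$, and since~$V(\mathfrak{p})$ is defined over~$\mathbb{F}_p$ its Frobenius orbit (of size~$m_p(\ell)$, the multiplicative order of~$p$ modulo~$\ell$) lies entirely in the intersection. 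Thus either~$N(\mathbf{v},\ell)$ is bounded by the fixed contribution of the~$(t-1)$-places, or~$N(\mathbf{v},\ell)\geqslant m_p(\ell)$. Whenever~$m_p(\ell)>\ell^{1/d+\varepsilon}$ the second alternative contradicts the Minkowski bound, and~$\nfix_\alpha(\Lambda)$ is uniformly bounded. The density dichotomy is then a purely arithmetic fact about multiplicative orders (Remark~\ref{enough_primes_observation}): the set of primes~$\ell$ with~$m_p(\ell)>\ell^{1/2+\varepsilon}$ has positive density (this is delicate and uses results surveyed in~\cite{MR2167719}), while the set with~$m_p(\ell)>\ell^{1/3+\varepsilon}$ has density one. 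The exponent~$1/d$ from Minkowski crosses this threshold between~$d=2$ and~$d=3$, which is the actual source of the dichotomy in the statement.
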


The proof of Theorem~\ref{main_result_restricted_on_primes} relies on a strong upper estimate for~$\nfix_\alpha(\Lambda)$ obtained in the proof of  Theorem~\ref{main_result_natural_boundary_entropy_rank_one}
(see Corollary~\ref{minkowski_bound} and Example~\ref{ledrappiers_example}) and contesting lower estimates provided by quantities analogous to greatest common divisors for subrings of global function fields. The proof also involves multiplicative orders of rational primes, and this accounts for the statement involving prime densities (see Remark~\ref{enough_primes_observation}).

Finally, we delve deeper into the dual algebraic structure behind~$\mathbb{Z}^d$-actions in order to obtain a product
decomposition
(Corollary~\ref{pp_count_final_product}) for counting periodic points for~$\mathbb{Z}^2$-actions with positive entropy. This allows us to exploit our earlier estimates for entropy rank one actions in an essential way and to prove Lind's conjecture in the following setting.

\begin{theorem}\label{main_result_natural_boundary_z2}
Let~$\alpha$ be a mixing Noetherian~$\mathbb{Z}^2$-action by automorphisms of a compact abelian zero-dimensional group~$X$. Then~$\zeta_\alpha$ has circle of convergence~$|z|=e^{-\entropy(\alpha)}$, and this is a natural boundary for the function.
\end{theorem}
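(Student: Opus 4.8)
The plan is to dichotomise on the entropy, and when $\entropy(\alpha)>0$ to split off the positive entropy part and reduce to the entropy rank one analysis already carried out. Dualising, write $X=\widehat{M}$ with $M$ a Noetherian module over $\mathbb{Z}[t_1^{\pm1},t_2^{\pm1}]$; since $X$ is zero-dimensional $M$ is a torsion abelian group, hence annihilated by an integer, and after primary decomposition and an appeal to the mixing hypothesis it is enough to treat the case where $M$ is a module over $\mathbb{F}_p[t_1^{\pm1},t_2^{\pm1}]$ for a single prime $p$. If $\entropy(\alpha)=0$, then $M$ has no associated prime of full Krull dimension, so every associated prime has Krull dimension at most one and (using mixing) every element of the action has finite entropy; thus $\alpha$ is a mixing entropy rank one $\mathbb{Z}^2$-action, and Theorem~\ref{main_result_natural_boundary_entropy_rank_one} applies directly, giving radius of convergence $1=e^{-\entropy(\alpha)}$ and the unit circle as natural boundary. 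So assume $\entropy(\alpha)>0$.

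For $\entropy(\alpha)>0$ the crucial tool is the product decomposition of Corollary~\ref{pp_count_final_product}, which I would use in the following shape: there is an entropy rank one $\mathbb{Z}^2$-action $\alpha'$ by automorphisms of a compact abelian zero-dimensional group with
\[
\nfix_\alpha(\Lambda)=e^{\entropy(\alpha)[\Lambda]}\,\nfix_{\alpha'}(\Lambda)\qquad(\Lambda\in\mathcal{L}_2).
\]
Here the positive entropy is carried entirely by the full Krull dimension (free) part of $M$, whose periodic point count equals $e^{\entropy(\alpha)[\Lambda]}$ exactly, since a free module contributes no homological correction; the remaining, strictly lower-dimensional, contributions, together with the homological corrections forced by $M$ not splitting as a direct sum of the quotients in a prime filtration, are all of entropy rank one and are absorbed into $\alpha'$. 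Granting this, the exponential factor passes into the variable: since $e^{\entropy(\alpha)[\Lambda]}z^{[\Lambda]}=(e^{\entropy(\alpha)}z)^{[\Lambda]}$,
\[
\zeta_\alpha(z)=\exp\biggl(\sum_{\Lambda\in\mathcal{L}_2}\frac{\nfix_{\alpha'}(\Lambda)}{[\Lambda]}\bigl(e^{\entropy(\alpha)}z\bigr)^{[\Lambda]}\biggr)=\zeta_{\alpha'}\bigl(e^{\entropy(\alpha)}z\bigr).
\]
The upper estimate of Corollary~\ref{minkowski_bound} gives $\ugr(\alpha')=0$, so $\zeta_{\alpha'}$ has radius of convergence $1$; hence $\ugr(\alpha)=\entropy(\alpha)$ and, by the Hadamard formula, $\zeta_\alpha$ has circle of convergence $|z|=e^{-\entropy(\alpha)}$. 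Moreover $\nfix_{\alpha'}(\Lambda)\geq1$ for every $\Lambda$ and $\mathbb{Z}^2$ has $\sigma(n)$ sublattices of index $n$, so $\zeta_{\alpha'}$ dominates $\prod_{d\geq1}(1-w^d)^{-1}$ coefficientwise; its coefficients therefore grow faster than any polynomial, so $\zeta_{\alpha'}$ is not rational, and Proposition~\ref{pc_route_to_natural_boundary} forces $|w|=1$ to be a natural boundary for $\zeta_{\alpha'}$. Transferring this across the change of variable $w=e^{\entropy(\alpha)}z$ yields $|z|=e^{-\entropy(\alpha)}$ as a natural boundary for $\zeta_\alpha$, completing the argument.

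The main obstacle is proving Corollary~\ref{pp_count_final_product} in exactly this form: one must work inside the dual module, recognise and isolate its full-dimensional part and verify that this part contributes precisely $e^{\entropy(\alpha)[\Lambda]}$ to $\nfix_\alpha(\Lambda)$, and then control the homological corrections coming from a prime filtration of $M$---this is where the strong upper estimates of Corollary~\ref{minkowski_bound} and the entropy rank one counting techniques are essential---uniformly over all of $\mathcal{L}_2$, with the indices $\Lambda$ for which $p\mid[\Lambda]$ (so that the relevant torsion subschemes are non-reduced) requiring separate care. A convenient feature of routing the final step through Proposition~\ref{pc_route_to_natural_boundary} rather than directly through Theorem~\ref{main_result_natural_boundary_entropy_rank_one} is that no mixing hypothesis on $\alpha'$ is needed, which matters because the decomposition can produce a non-mixing $\alpha'$: already $M=(t_1-1,t_2-1)\subseteq\mathbb{F}_p[t_1^{\pm1},t_2^{\pm1}]$ forces a trivial action on a finite group to be split off.
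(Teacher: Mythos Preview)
Your reduction in the zero-entropy case is fine: for~$d=2$, a mixing Noetherian zero-dimensional action with~$\entropy(\alpha)=0$ is automatically of entropy rank one, so Theorem~\ref{main_result_natural_boundary_entropy_rank_one} applies. The difficulty is entirely in the positive-entropy case, and there your argument has a genuine gap.

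You assert an \emph{exact} identity~$\nfix_\alpha(\Lambda)=e^{\entropy[\Lambda]}\nfix_{\alpha'}(\Lambda)$ for some entropy rank one action~$\alpha'$, and then invoke Proposition~\ref{pc_route_to_natural_boundary} for~$\zeta_{\alpha'}$. But Corollary~\ref{pp_count_final_product} does not say this: it produces a constant~$A\in G(\alpha)$ and a function~$\psi:\mathcal{L}_2\to\mathbb{N}$ with~$\nfix_\alpha(\Lambda)=A\,e^{\entropy[\Lambda]}\psi(\Lambda)$ and~$\psi(\Lambda)\le C^{\sqrt{[\Lambda]}}$. The function~$\psi$ arises as a ratio of the filtration quotients~$|D/\mathfrak{b}_\Lambda D|$ and the correction factors~$|(\mathfrak{b}_\Lambda\cap\mathfrak{p})/\mathfrak{a}|$ of Lemma~\ref{pp_count_induction_step_lemma}; there is no reason for such a ratio to be the periodic-point count of any algebraic action, and the paper makes no such claim. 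This matters because Proposition~\ref{pc_route_to_natural_boundary} is proved via the integrality of the Taylor coefficients of~$\zeta_{\alpha'}$, which in turn comes from the orbit-counting interpretation available only when~$\nfix_{\alpha'}$ is a genuine fixed-point count. For an arbitrary~$\psi:\mathcal{L}_2\to\mathbb{N}$ the series~$\exp\bigl(\sum_\Lambda\psi(\Lambda)[\Lambda]^{-1}w^{[\Lambda]}\bigr)$ need not have integer coefficients, so the P\'olya--Carlson step fails. The extra constant~$A$ is a secondary nuisance (it gives~$\zeta_\alpha(z)=\bigl(\text{something}\bigr)^A$ rather than a clean substitution), but the realisability of~$\psi$ is the essential obstruction, and your final paragraph, while acknowledging the difficulty, does not address it.

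The paper circumvents this by never trying to interpret~$\psi$ dynamically. It applies P\'olya--Carlson not to~$\zeta_{\alpha'}$ but to the ordinary generating function~$F(z)=\sum_{n\ge1}a_nz^n$ with~$a_n=\sum_{[\Lambda]=n}\psi(\Lambda)\in\mathbb{N}$, linked to~$\zeta_\alpha$ via~$F(e^{\entropy}z)=z\zeta_\alpha'(z)/(A\zeta_\alpha(z))$. If~$\zeta_\alpha$ continued past its circle of convergence,~$F$ would be rational and~$(a_n)$ a linear recurrence. The paper then rules this out using two further ingredients you do not invoke: Theorem~\ref{main_result_restricted_on_primes}, which forces~$a_p=A\sigma(p)$ along a positive-density set of primes and pins down the shape of the recurrence via Kronecker's lemma, and a Gronwall-type lower bound (Lemma~\ref{lyingunderneaththestars}) showing~$a_n\ge\sigma(n)$ is too large along any arithmetic progression for the recurrence to survive. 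Your route via Proposition~\ref{pc_route_to_natural_boundary} would be cleaner if it worked, but as it stands the exact decomposition you need is not available, and the paper's detour through recurrence sequences and~$\sigma(n)$ is doing real work that your proposal omits.
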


Notably, in the proof of Theorem~\ref{main_result_natural_boundary_z2},
the P{\'o}lya--Carlson
theorem cannot be used in the same way as in the proof of Theorem~\ref{main_result_natural_boundary_entropy_rank_one}, as~$\zeta_\alpha$ may not have the unit circle as the circle of convergence. Nonetheless, we are able to use a functional equation relating the dynamical zeta function to an ordinary generating function, such that, in combination with our product formula for counting periodic points, the P{\'o}lya--Carlson result can be applied.

The paper is organised as follows. In Section~\ref{periodic_points_section}, we develop the periodic point counting results that lead to Theorem~\ref{main_result_natural_boundary_entropy_rank_one} and Theorem~\ref{main_result_restricted_on_primes}. In Section~\ref{single_automorphisms_section}, using existing periodic point counting formulae, we show how the dichotomy of Theorem~\ref{single_automorphism_theorem} arises via the phenomenon of overconvergence, instead of via the P{\'o}lya--Carlson result. In Section~\ref{examples_section}, we  describe the final steps in the proof of Theorem~\ref{main_result_natural_boundary_entropy_rank_one} and consider some examples. In particular, Example~\ref{shift extension_of_ledrappier} is a mixing~$\mathbb{Z}^3$-action~$\alpha$ on a zero-dimensional group for which~$\entropy(\alpha)=0$ and~$\ugr(\alpha)=\frac{2}{3}\log 2$.
The dynamical zeta function for this example also has a natural boundary, and hence further supports Lind's conjecture. However, it arises in an essentially different way to all the other cases we consider, and cannot be dealt with by similar methods. It is also interesting to note that the value of $\ugr(\alpha)$ for this example is smaller than any possible non-zero entropy of a $\mathbb{Z}^d$--action by automorphisms of a compact abelian zero-dimensional group. Finally,  Section~\ref{positive_entropy_section} is devoted to the proof of Theorem~\ref{main_result_natural_boundary_z2}

\section{Periodic points}\label{periodic_points_section}

In this section we assemble and develop periodic point counting results in order to describe the dynamical zeta function sufficiently
explicitly to demonstrate a natural boundary in a variety of settings.

The commutative algebra used to study a~$\mathbb{Z}^d$-action~$\alpha$ by continuous automorphisms of a compact abelian group~$X$ is now familiar (see, for example, Schmidt's monograph~\cite{MR1345152} and Einsiedler and Lind's paper~\cite{MR2031042} which is particularly useful when each automorphism~$\alpha^\mathbf{n}$ has finite entropy, in which case~$\alpha$ is said to be of \emph{entropy rank one}. The starting point is to notice that
the Pontryagin dual of~$X$, denoted~$M=\widehat{X}$, becomes a module over the Laurent polynomial ring~$R_d=\mathbb{Z}[u_1^{\pm 1},\dots,u_d^{\pm 1}]$ by identifying application of the dual automorphism~$\widehat{\alpha}^\mathbf{n}$ with multiplication by~$u^\mathbf{n}=u_1^{n_1}\cdots u_d^{n_d}$, and extending this in a natural way to polynomials. Conversely, using this multiplication rule, any~$R_d$-module~$M$ defines an action~$\alpha_M$ of~$\mathbb{Z}^d$ by continuous automorphisms of~$X=\widehat{M}$.

Additionally, if~$X$ is assumed to be zero-dimensional, via duality, this means every element of~$M$ has finite
additive order, so every associated
prime ideal~$\mathfrak{p}\in\ass(M)$ contains a rational prime. It will also be useful to refer to the coheight of a prime ideal~$\mathfrak{p}\subset R_d$, denoted~$\coht(\mathfrak{p})$; this coincides with the Krull dimension of the domain~$R_d/\mathfrak{p}$. Einsiedler and Lind show that if~$\alpha$ has entropy rank one and~$X$ is zero-dimensional then~$\coht(\mathfrak{p})\leqslant 1$ for all~$\mathfrak{p}\in\ass(M)$. Unless otherwise stated, we will always assume from now on that~$X$ is zero-dimensional. If the~$R_d$-module~$M$ is Noetherian we also say that~$\alpha=\alpha_M$ is {Noetherian}.
In this case, there is a chain of ideals
\begin{equation}\label{module_chain}
M_0\subset M_1\subset\cdots\subset M_r=M,
\end{equation}
for which~$M_0=\{0\}$ and~$M_i/M_{i-1}\cong R_d/\mathfrak{p}_i$ for a list~$\mathfrak{p}_1,\dots,\mathfrak{p}_r\subset R_d$ of prime ideals which are
either associated primes of~$M$, or contain such a prime. In addition, we may always arrange such a filtration so that that the associated primes appear first and so that those associated primes with the greatest coheight appear foremost amongst the associated primes.

Recall that a \emph{global field}~$\mathbb{K}$ of characteristic~$p>0$ is a finite extension of the rational function field~$\mathbb{F}_p(t)$, where~$t$ is an indeterminate. The \emph{places} of~$\mathbb{K}$ are the equivalence classes of absolute values on~$\mathbb{K}$, which are all non-archimedean.
For example, the \emph{infinite place} of~$\mathbb{F}_p(t)$ is given by~$|f/g|_\infty=p^{\deg(f)-\deg(g)}$ and all other places of~$\mathbb{F}_p(t)$ correspond, in the usual way, to valuation rings obtained by localizing the domain~$\mathbb{F}_p[t]$ at its non-trivial prime ideals (generated by irreducible polynomials). The places of~$\mathbb{K}$ are extensions of those just descibed,
and the set of all such places is denoted~$\mathcal{P}(\mathbb{K})$. Such sets of places provide a foundation for the periodic point counting formulae presented here.

For ease of notation, for any~$\mathbf{n}\in\mathbb{Z}^d$, write~$\nfix_\alpha(\langle \mathbf{n}\rangle)=\nfix_\alpha(\mathbf{n})$. In the case that the action~$\alpha$ is generated by a single ergodic automorphism with finite entropy, a formula for counting periodic points is given in \cite{MR2441142}. In particular, there exist function fields~$\mathbb{K}_1,\dots,\mathbb{K}_r$ of the form~$\mathbb{K}_i=\mathbb{F}_{p(i)}(t)$, where each~$p(i)$ is a rational prime, and sets of finite places~$T_i\subset\mathcal{P}(\mathbb{K}_i)$,~$1\leqslant i\leqslant r$, such that
\begin{equation}\label{periodic_point_formula}
\nfix_\alpha(n)=\prod_{i=1}^{r}\prod_{v\in T_i}|t^n-1|^{-1}_v,
\end{equation}
for all positive integers~$n$. Remark 1 in~\cite{MR2441142} also shows that the infinite places and places corresponding to localization at~$(t)$ do not appear in this product. We assume throughout that absolute values are normalised so that the Artin product formula for global fields holds. Using
the product formula, we may also write~(\ref{periodic_point_formula}) as
\begin{equation}\label{periodic_point_formula_infinite_places}
\nfix_\alpha(n)=\prod_{i=1}^{r}\prod_{v\in S_i}|t^n-1|_v,
\end{equation}
where~$S_i=\mathcal{P}(\mathbb{K}_i)\setminus T_i$.

Suppose~$\alpha$ is an entropy rank one~$\mathbb{Z}^d$-action and that the module~$M$ is Noetherian. If there exists~$\mathbf{n}\in\mathbb{Z}^d$ for which~$\alpha^\mathbf{n}$ is ergodic then~\cite[Sec.~2]{MR2308145} shows that~$\coht(\mathfrak{p})=1$ for all~$\mathfrak{p}\in\ass(M)$. In this case, the field of fractions of~$R_d/\mathfrak{p}$ is again a global field of positive characteristic which we denote by~$\mathbb{K}(\mathfrak{p})$. Furthermore, for any~$\mathbf{n}\in\mathbb{Z}^d$ for which~$\alpha^\mathbf{n}$ is ergodic,~\cite[Sec.~3]{MR2308145} shows that
\begin{equation}\label{what_youre_doing_in_my_head}
\nfix_\alpha(\mathbf{n})
=
\prod_{\mathfrak{p}\in\ass(M)}
\prod_{v\in S(\mathfrak{p})}
|\overline{u}^\mathbf{n}-1|^{m(\mathfrak{p})}_v
\end{equation}
where~$\overline{u}^\mathbf{n}$ denotes the image of~$u_1^{n_1}\cdots u_d^{n_d}$ in the domain~$R_d/\mathfrak{p}$,~$m(\mathfrak{p})$ is the dimension of the~$\mathbb{K}(\mathfrak{p})$-vector space~$M\otimes\mathbb{K}(\mathfrak{p})$ and
\begin{equation}\label{places_definition}
S(\mathfrak{p})=\{v\in\mathcal{P}(\mathbb{K}(\mathfrak{p})):|R_d/\mathfrak{p}|_v\mbox{ is an unbounded subset of }\mathbb{R}\}.
\end{equation}
Note that the set~$S(\mathfrak{p})$ is a finite because~$R_d/\mathfrak{p}$ is finitely generated.

In a more general setting, it is difficult to obtain such
an explicit formula, although coarser estimates are also sometimes useful, such as the following.

\begin{lemma}\label{i_dont_mind_if_i_lose}
Let~$\alpha$ be an
entropy rank one~$\mathbb{Z}^d$-action by automorphisms of a compact abelian zero-dimensional group~$X$ and let~$M$ denote the dual~$R_d$-module. Then there is a~$\mathbb{Z}^d$-action~$\beta$ by automorphisms of a compact abelian zero-dimensional group corresponding via duality to a Noetherian submodule of $M$ such that, for each~$\mathbf{n}\in\mathbb{Z}^d$ for which~$\alpha^\mathbf{n}$ is ergodic,~$\beta^\mathbf{n}$ is ergodic and
\[
\nfix_\alpha(\mathbf{n})\leqslant\nfix_\beta(\mathbf{n})<\infty.
\]
\end{lemma}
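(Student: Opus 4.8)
The plan is to realise $\beta$ as the action attached to a carefully chosen finitely generated submodule $N\subseteq M$ (so that $\widehat N$ is a quotient of $X$, making $\beta$ an algebraic factor of $\alpha$, automatically zero-dimensional and ergodic in every direction in which $\alpha$ is), and then to compare periodic point counts by a short homological computation. We may assume $\alpha^{\mathbf n_0}$ is ergodic for some $\mathbf n_0$, since otherwise the statement is vacuous and any Noetherian submodule serves.

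First I would pin down the structure of $M$. Since $\alpha$ has entropy rank one we have $\coht(\mathfrak p)\le 1$ for every $\mathfrak p\in\ass(M)$; and a coheight-zero associated prime would be maximal, so $R_d/\mathfrak p$ would be a finite submodule of $M$ on which a power of $u^{\mathbf n_0}$ acts trivially, contradicting ergodicity of $\alpha^{\mathbf n_0}$. Next, the entropy formula for Noetherian entropy rank one actions shows that an ergodic such action contributes at least $\log 2$ to $h(\cdot)$ from each of its associated primes (since, for ergodic directions, $\overline u^{\mathbf n_0}$ is not a root of unity in $R_d/\mathfrak q$, the relevant places give a logarithmic Mahler-type lower bound which is a nonzero multiple of $\log p$); hence a finitely generated submodule of $M$ with many associated primes, or with large multiplicity at some $\mathfrak p_i$, would have entropy at $\mathbf n_0$ exceeding the finite number $h(\alpha^{\mathbf n_0})$. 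Passing to the supremum over finitely generated submodules, this forces $\ass(M)=\{\mathfrak p_1,\dots,\mathfrak p_k\}$ finite with every $\coht(\mathfrak p_i)=1$ and each $M_{\mathfrak p_i}$ of finite length over $(R_d)_{\mathfrak p_i}$. Then the localisation map $M\to\bigoplus_i M_{\mathfrak p_i}$ is injective and its target has finite length over the semilocalisation $S^{-1}R_d$, where $S=R_d\setminus\bigcup_i\mathfrak p_i$. I would take $N\subseteq M$ to be generated by finitely many elements of $M$ whose images generate $\bigoplus_i M_{\mathfrak p_i}$ over $S^{-1}R_d$. Then $N$ is Noetherian, $N_{\mathfrak p_i}=M_{\mathfrak p_i}$ for every $i$, and consequently $M/N$ is supported only at maximal ideals of $R_d$ (its support lies in $\bigcup_i V(\mathfrak p_i)$ but avoids the points $\mathfrak p_i$). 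Put $\beta=\alpha_N$. Since $\beta$ is a factor of $\alpha$ it again has entropy rank one, $\beta^{\mathbf n_0}$ is ergodic, and $N$ is Noetherian, so $\coht(\mathfrak q)=1$ for all $\mathfrak q\in\ass(N)$ and \eqref{what_youre_doing_in_my_head} expresses $\nfix_\beta(\mathbf n)$, for any ergodic $\mathbf n$, as a finite product of finite positive numbers; in particular $\nfix_\beta(\mathbf n)<\infty$.

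It then remains to show $\nfix_\alpha(\mathbf n)\le\nfix_\beta(\mathbf n)$ whenever $\alpha^{\mathbf n}$ is ergodic. Fix such an $\mathbf n$ and set $f=u^{\mathbf n}-1$: this is a non-zero-divisor of $R_d$, it lies in $S$, and by ergodicity it annihilates no nonzero element of $M$. Applying $-\otimes_{R_d}R_d/(f)$ to $0\to N\to M\to M/N\to 0$ and using that $R_d/(f)$ has projective dimension one gives the exact sequence
\[
0\longrightarrow (M/N)[f]\longrightarrow N/fN\longrightarrow M/fM\longrightarrow (M/N)/f(M/N)\longrightarrow 0.
\]
The heart of the argument, and the step I expect to be the main obstacle, is to prove that $f$ acts surjectively on $M/N$, i.e.\ $(M/N)/f(M/N)=0$. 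Localising at a maximal ideal $\mathfrak m\ni f$, the module $(M/N)_{\mathfrak m}$ is an $\mathfrak m$-torsion submodule of $(\bigoplus_i M_{\mathfrak p_i})_{\mathfrak m}/N_{\mathfrak m}$, which is $\mathfrak m$-torsion, has finite $f$-torsion, and is $f$-divisible because $f\in S$ was inverted in forming $\bigoplus_i M_{\mathfrak p_i}$; the finiteness of $\mathrm{length}(M_{\mathfrak p_i})$ should force $(M/N)_{\mathfrak m}$ itself to be $f$-divisible (after a further localisation one is reduced to the elementary structure of submodules of the injective hull of a residue field over a discrete valuation ring). Granting this, the displayed sequence exhibits $M/fM$ as a quotient of the finite group $N/fN$, and therefore
\[
\nfix_\alpha(\mathbf n)=|M/fM|\le|N/fN|=\nfix_\beta(\mathbf n)<\infty,
\]
which is the assertion. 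Everything apart from this surjectivity is routine bookkeeping with the standard module dictionary and the cited periodic point formula; it is precisely the entropy-rank-one input $M\subseteq\bigoplus_i M_{\mathfrak p_i}$, together with the inversion of $S$, that one needs to turn the surjectivity of $f$ on $M/N$ into a theorem rather than a hope.
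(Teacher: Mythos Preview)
Your construction of the Noetherian submodule $N$ is sound and achieves the same key property as the paper's submodule $L$: namely, $\ass(M/N)$ consists entirely of maximal ideals, so $M/N$ is locally finite. However, the step you flag as the main obstacle is not merely hard---it is false. The claim that $f=u^{\mathbf n}-1$ acts surjectively on $M/N$ fails in general. For a concrete counterexample, take $d=2$, $\mathfrak p=(2,1+u_1+u_2)$, $D=R_2/\mathfrak p\cong\mathbb F_2[t^{\pm1},(t+1)^{\pm1}]$, choose distinct irreducibles $g_1,g_2,\ldots\in D$ (with $g_1=t^2+t+1$, say), and let $M=D+\sum_k g_k^{-1}D\subset\mathbb K(\mathfrak p)$. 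Then $\ass(M)=\{\mathfrak p\}$, $M_{\mathfrak p}=\mathbb K(\mathfrak p)$ is one-dimensional, $M/D$ is locally finite so $\alpha_M$ has entropy rank one, and $N=D$ is a valid choice under your criterion. But $M/N\cong\bigoplus_k D/(g_k)$, and for $\mathbf n=(3,0)$ (which is ergodic) one has $\overline f\sim g_1$, so $f$ annihilates the summand $D/(g_1)$ and $(M/N)/f(M/N)\cong D/(g_1)\neq 0$. Your sketch via the injective hull breaks down precisely because proper submodules of the Pr\"ufer-type module $D[1/g_1]/D$ are \emph{never} $g_1$-divisible.

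What rescues the inequality is that in this example $(M/N)[f]\cong D/(g_1)$ as well, so the two correction terms in your Tor sequence cancel. The paper exploits exactly this cancellation, but at the level of finite approximations rather than on $M/N$ directly: writing $M=\bigcup_i M_i$ with $M_0=L$ and each $M_i/M_{i-1}$ finite, one has $|M_i/fM_i|=|L/fL|$ for every $i$ because $|F[f]|=|F/fF|$ for any finite module $F$; then $M/fM=\varinjlim M_i/(fM\cap M_i)$ is a directed union of groups each injecting into $M_i/fM_i$, hence of uniformly bounded size $|L/fL|$. You could patch your argument by proving $|(M/N)/f(M/N)|\le|(M/N)[f]|$ via the same direct-limit reasoning, but at that point you are essentially reproducing the paper's proof, and the surjectivity route offers no shortcut.
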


\begin{proof}
If there is no~$\mathbf{n}\in\mathbb{Z}^d$ for which~$\alpha_M^\mathbf{n}$ is ergodic, or if~$M$ is Noetherian, then there is nothing to prove. Hence, assume~$\alpha_M^\mathbf{n}$ is ergodic and~$M$ is not Noetherian. Since~$M$ corresponds to an entropy rank one action, so does every submodule~$L\subset M$ and quotient~$M/L$. Also, since~$M$ can be expressed as a
countable increasing
union of Noetherian submodules, \cite[Prop.~4.4 and Prop.~6.1]{MR2031042} show that every prime ideal~$\mathfrak{p}\in\ass(M)$ has~$\coht(\mathfrak{p})\leqslant 1$. If~$\coht(\mathfrak{p})=0$, then~$\mathfrak{p}$ is maximal,~$R_d/\mathfrak{p}$ is a finite field, and subsequently~$\alpha_{R_d/\mathfrak{p}}^\mathbf{n}$ is not ergodic.
Since there is a submodule of~$M$ isomorphic to~$R_d/\mathfrak{p}$, this contradicts ergodicity of~$\alpha_M^\mathbf{n}$, by~\cite[Prop.~6.6]{MR1345152}. Therefore,
~$\coht(\mathfrak{p})=1$ for all~$\mathfrak{p}\in\ass(M)$.

If~$L\subset M$ is a Noetherian submodule,~$\mathfrak{p}\in\ass(M/L)$ and~$\coht(\mathfrak{p})=1$, then for some~$x\in M$,~$\ann(x+L)=\mathfrak{p}$. So,~$\mathfrak{p}$ contains~$\ann(x)$ and~$\mathfrak{p}\in\supp(Rx)$. Since all associated primes of~$Rx$ have coheight 1,~$\ass(Rx)$ coincides with the elements of~$\supp(Rx)$ with coheight 1, by~\cite[Th.~6.5(iii)]{MR1011461}. Hence,
\begin{equation}\label{from_a_man_whose_plan_is_already_wrecked}
\mathfrak{p}\in\ass(M/L)\mbox{ and }\coht(\mathfrak{p})=1\Rightarrow\mathfrak{p}\in\ass(M)
\end{equation}
We claim that there is a Noetherian submodule~$L\subset M$ such that~$\ass(M/L)$ is comprised entirely of maximal ideals. To see this, choose a submodule~$L\subset M$ such that~$L\cong R_d/\mathfrak{p}$ for some~$\mathfrak{p}\in \ass(M)$. Since~$\alpha_{R_d/\mathfrak{p}}^\mathbf{n}$ is an ergodic automorphism of a zero-dimensional group,~$\mathsf{h}(\alpha_{R_d/\mathfrak{p}}^\mathbf{n})\geqslant\log 2$. Now, if there exists~$\mathfrak{p}'\in\ass(M/L)$ with~$\coht(\mathfrak{p}')=1$, then~(\ref{from_a_man_whose_plan_is_already_wrecked}) shows that~$\mathfrak{p}'\in\ass(M)$, and there is a Noetherian  submodule~$L'\subset M$ containing~$L$ with~$L'/L\cong R/\mathfrak{p}'$. Furthermore, the entropy addition formula shows that~$\mathsf{h}(\alpha_{L'}^\mathbf{n})\geqslant 2\log 2$. If we can find no such prime~$\mathfrak{p}'$, then~$\ass(M/L)$ is comprised entirely of maximal ideals. If we repeat the process inductively with~$L'$ in place of~$L$, it must eventually terminate with a Noetherian submodule~$L\subset M$ with the claimed property, otherwise we contradict the assumption that~$\mathsf{h}(\alpha_M^\mathbf{n})<\infty$. Note also that~$L$ is independent of the specific choice of ergodic automorphism~$\alpha_M^\mathbf{n}$.

We may now find a chain of modules
\[
\{L\}=M_0\subset M_1\subset M_2\subset\dots
\]
with~$M=\bigcup_{i\ge0}M_i$ and~$M_i/M_{i-1}$ finite for all~$i\geqslant 1$. Following the approach of~\cite[Sec.~3]{MR2308145}, we have~$\nfix_\alpha(\mathbf{n})=|M/(u^\mathbf{n}-1)M|$,
\begin{equation}\label{well_i_burned}
M/(u^\mathbf{n}-1)M\cong\varinjlim M_i/((u^\mathbf{n}-1)M\cap M_i),
\end{equation}
and
\begin{equation}\label{all_my_travellers_cheques}
|M_i/((u^\mathbf{n}-1)M\cap M_i)|\leqslant|M_i/(u^\mathbf{n}-1)M_i)|.
\end{equation}
Since each~$M_i$ is a Noetherian~$R_d$-module corresponding to an entropy rank one action, and since each factor~$M_i/M_{i-1}$ is finite, the proof of~\cite[Th.~3.2]{MR2308145} shows that
\[
|M_i/(u^\mathbf{n}-1)M_i)|=|L/(u^\mathbf{n}-1)L|
\]
for all~$i\geqslant 0$. It follows from~(\ref{well_i_burned}) and~(\ref{all_my_travellers_cheques}) that~$|M/(u^\mathbf{n}-1)M|$ is bounded above by~$|L/(u^\mathbf{n}-1)L)|$. If we let~$\beta=\alpha_L$ be the the action corresponding to the Noetherian~$R_d$-module~$L$, then we have~$\nfix_\beta(\mathbf{n})=|L/(u^\mathbf{n}-1)L|$, and the required result follows.
\end{proof}

This leads to the following uniformity result.

\begin{theorem}\label{cos_if_i_win_ill_be_so_confused}
If~$\alpha$ is an entropy rank one~$\mathbb{Z}^d$-action by continuous automorphisms of a compact abelian zero-dimensional group~$X$, then there is a constant~$C>0$ such that, for all~$\mathbf{n}\in\mathbb{Z}^d$ for which~$\alpha^\mathbf{n}$ is ergodic,
\[
\nfix_\alpha(\mathbf{n})\leqslant C^{||\mathbf{n}||}.
\]
\end{theorem}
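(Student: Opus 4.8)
The plan is to reduce to the case of a Noetherian module possessing an ergodic element, where the explicit product formula~(\ref{what_youre_doing_in_my_head}) is available, and then to estimate that product one place at a time using the non-archimedean triangle inequality. If no~$\mathbf{n}\in\mathbb{Z}^d$ has~$\alpha^\mathbf{n}$ ergodic the assertion is vacuous, so assume such an~$\mathbf{n}$ exists. By Lemma~\ref{i_dont_mind_if_i_lose} there is a \emph{single} Noetherian submodule~$L\subseteq M$, independent of the choice of ergodic element, whose associated action~$\beta=\alpha_L$ satisfies~$\nfix_\alpha(\mathbf{n})\leqslant\nfix_\beta(\mathbf{n})<\infty$ and has~$\beta^\mathbf{n}$ ergodic whenever~$\alpha^\mathbf{n}$ is ergodic. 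Since a bound of the asserted shape for~$\beta$ transfers immediately to~$\alpha$, we may assume from now on that~$M$ is Noetherian and that~$\alpha^\mathbf{n}$ is ergodic for some~$\mathbf{n}$. Then~$\coht(\mathfrak{p})=1$ for every~$\mathfrak{p}\in\ass(M)$, each~$\mathbb{K}(\mathfrak{p})$ is a global function field, and~(\ref{what_youre_doing_in_my_head}) holds for all~$\mathbf{n}$ with~$\alpha^\mathbf{n}$ ergodic.

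The crucial point is that the data in~(\ref{what_youre_doing_in_my_head}) --- the finite set~$\ass(M)$, the finite sets~$S(\mathfrak{p})$, and the integers~$m(\mathfrak{p})$ --- depend only on~$M$ and not on~$\mathbf{n}$. Fix~$\mathfrak{p}\in\ass(M)$ and~$v\in S(\mathfrak{p})$. Since~$v$ is non-archimedean and~$|1|_v=1$,
\[
|\overline{u}^\mathbf{n}-1|_v\leqslant\max\bigl(|\overline{u}^\mathbf{n}|_v,\,1\bigr).
\]
Because each~$u_i$ is a unit of~$R_d$, its image~$\overline{u}_i$ is a unit of the domain~$R_d/\mathfrak{p}$, so~$a_i:=|\overline{u}_i|_v$ is a positive real and~$|\overline{u}^\mathbf{n}|_v=\prod_{i=1}^{d}a_i^{n_i}$. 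Putting~$b(v)=\max_{1\leqslant i\leqslant d}\max(a_i,a_i^{-1})\geqslant 1$, we have~$a_i^{n_i}\leqslant b(v)^{|n_i|}$ for each~$i$ (according to the sign of~$n_i$), and~$1\leqslant b(v)^{|n_1|+\cdots+|n_d|}$, whence
\[
|\overline{u}^\mathbf{n}-1|_v\leqslant b(v)^{\,|n_1|+\cdots+|n_d|}.
\]

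Substituting these estimates into~(\ref{what_youre_doing_in_my_head}) and multiplying over the finitely many pairs~$(\mathfrak{p},v)$ gives
\[
\nfix_\alpha(\mathbf{n})\leqslant C_0^{\,|n_1|+\cdots+|n_d|},\qquad C_0:=\prod_{\mathfrak{p}\in\ass(M)}\ \prod_{v\in S(\mathfrak{p})}b(v)^{m(\mathfrak{p})}\geqslant 1,
\]
a constant depending only on~$M$. Finally, by equivalence of norms on~$\mathbb{R}^d$ there is~$\kappa>0$ with~$|n_1|+\cdots+|n_d|\leqslant\kappa\|\mathbf{n}\|$ for all~$\mathbf{n}\in\mathbb{Z}^d$, so~$C=C_0^{\kappa}$ works. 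The one step that genuinely needs care is the opening reduction: Lemma~\ref{i_dont_mind_if_i_lose} is exactly what allows a possibly non-Noetherian~$M$ to be replaced by a single Noetherian submodule that simultaneously controls every ergodic~$\mathbf{n}$; once that is granted, the remainder is a routine consequence of the ultrametric inequality together with the finiteness of all the quantities occurring in~(\ref{what_youre_doing_in_my_head}).
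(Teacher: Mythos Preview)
Your proof is correct and follows essentially the same route as the paper: reduce to the Noetherian case via Lemma~\ref{i_dont_mind_if_i_lose}, apply the product formula~(\ref{what_youre_doing_in_my_head}), bound each factor using the ultrametric inequality together with~$\max(|\overline{u}_i|_v,|\overline{u}_i|_v^{-1})^{|n_i|}$, and finish with~$\|\mathbf{n}\|_1\leqslant\sqrt{d}\,\|\mathbf{n}\|$. The only cosmetic difference is that the paper takes a single global maximum~$\lambda$ over all~$(i,v,\mathfrak{p})$ and raises it to the power~$E=\sum_{\mathfrak{p}}m(\mathfrak{p})|S(\mathfrak{p})|$, whereas you form a place-by-place constant~$b(v)$ and take the product; the resulting constants differ but the argument is the same.
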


\begin{proof}
Since we simply require an upper estimate for~$\nfix_\alpha(\mathbf{n})$, using Lemma~\ref{i_dont_mind_if_i_lose}, we may as well assume that~$(X, \alpha)$ corresponds to a Noetherian module~$M$. In this case, we may apply~(\ref{what_youre_doing_in_my_head}) to calculate~$\nfix_\alpha(\mathbf{n})$.

Considering any place~$v$ appearing in~(\ref{what_youre_doing_in_my_head}), the ultrametric inequality shows that
\[
|\overline{u}^\mathbf{n}-1|_v\leqslant
\prod_{i=1}^d\max\{|\overline{u}_i|_v,|\overline{u}_i|_v^{-1}\}^{|n_i|}.
\]
Let
\[
\lambda=\max\{\max\{|\overline{u}_i|_v,|\overline{u}_i|_v^{-1}\}:i=1\dots d, v\in S(\mathfrak{p}),  \mathfrak{p}\in\ass(M)\}
\]
and~$E=\sum_{\mathfrak{p}\in\ass(M)}{m(\mathfrak{p})}|S(\mathfrak{p})|$. Then the right-hand side of~(\ref{what_youre_doing_in_my_head}) is at most
\[
\lambda^{E\cdot||\mathbf{n}||_1},
\]
where~$||\mathbf{n}||_1=\sum_{i=1}^d|n_i|$. Since~$||\mathbf{n}||_1\leqslant\sqrt{d}||\mathbf{n}||$, the required result follows from~(\ref{what_youre_doing_in_my_head}) with~$C=\lambda^{E\sqrt{d}}$.
\end{proof}

We now turn our attention to estimating the size of~$\nfix_\alpha(\Lambda)$,~$\Lambda\in\mathcal{L}_d$. For any~$\Lambda\in\mathcal{L}_d$, let
\begin{equation}\label{pp_ideal_definition}
\mathfrak{b}_\Lambda=(\overline{u}^\mathbf{n}-1:\mathbf{n}\in \Lambda)\subset R_d.
\end{equation}
Note that this ideal is identical to~$(\overline{u}^\mathbf{n}-1:\mathbf{n}\in \Lambda')$, where~$\Lambda'$ is any set of generators for the group~$\Lambda$.

To begin with, we have the following consequence of Theorem~\ref{cos_if_i_win_ill_be_so_confused}.

\begin{corollary}\label{minkowski_bound}
Let~$\alpha$ be an entropy rank one~$\mathbb{Z}^d$-action by continuous automorphisms of a compact abelian zero-dimensional group~$X$. Then there is a constant~$C>0$ such that, for all~$\Lambda\in\mathcal{L}_d$ with~$\alpha^\mathbf{n}$ ergodic for all ~$\mathbf{n}\in\Lambda\setminus\{\mathbf{0}\}$,
\[
\nfix_{\alpha}(\Lambda)\leqslant C^{\sqrt{d}[\Lambda]^{1/d}},
\]
\end{corollary}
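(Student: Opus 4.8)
The plan is to reduce the bound on $\nfix_\alpha(\Lambda)$ to the single-generator estimate from Theorem~\ref{cos_if_i_win_ill_be_so_confused} by choosing a suitably short nonzero vector in $\Lambda$. Concretely, for any $\Lambda\in\mathcal{L}_d$ the quotient $\mathbb{Z}^d/\Lambda$ has order $[\Lambda]$, and $\Lambda$ is a lattice of covolume $[\Lambda]$ in $\mathbb{R}^d$. By Minkowski's first theorem (applied to a Euclidean ball, or more simply to a cube), $\Lambda$ contains a nonzero vector $\mathbf{n}$ with $\|\mathbf{n}\|\leqslant c_d[\Lambda]^{1/d}$ for an explicit dimensional constant $c_d$; one can even take $c_d=\sqrt{d}$ by using a cube of side $[\Lambda]^{1/d}$, since such a cube has volume $[\Lambda]$ and hence must contain a nonzero lattice point, which then has Euclidean norm at most $\sqrt{d}[\Lambda]^{1/d}$ (this is where the $\sqrt{d}$ in the statement comes from, and I would absorb any constant factor into $C$).

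Next I would use the obvious monotonicity $\nfix_\alpha(\Lambda)\leqslant\nfix_\alpha(\langle\mathbf{n}\rangle)=\nfix_\alpha(\mathbf{n})$ for this chosen $\mathbf{n}\in\Lambda\setminus\{\mathbf{0}\}$: every point fixed by the whole sublattice $\Lambda$ is in particular fixed by $\alpha^\mathbf{n}$, so the set being counted on the left is a subset of the one on the right. Here it is essential that $\alpha^\mathbf{n}$ is ergodic, which holds by the hypothesis that $\alpha^\mathbf{n}$ is ergodic for every $\mathbf{n}\in\Lambda\setminus\{\mathbf{0}\}$; ergodicity guarantees $\nfix_\alpha(\mathbf{n})<\infty$ and allows Theorem~\ref{cos_if_i_win_ill_be_so_confused} to apply. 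Combining, with $C_0$ the constant from that theorem, I get
\[
\nfix_\alpha(\Lambda)\leqslant\nfix_\alpha(\mathbf{n})\leqslant C_0^{\|\mathbf{n}\|}\leqslant C_0^{\sqrt{d}[\Lambda]^{1/d}},
\]
so the corollary holds with $C=C_0$ (or with $C=C_0$ after noting $C_0\geqslant 1$, so that raising to a larger exponent only weakens the bound).

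There is essentially no hard step here; the proof is a one-line combination of a lattice-geometry fact with the previously established exponential bound in $\|\mathbf{n}\|$. The only point requiring a little care is the choice of the short vector: I would state Minkowski's theorem in the cube form precisely so the exponent comes out as $\sqrt{d}[\Lambda]^{1/d}$ rather than some other dimensional constant, and I would remark that since $\nfix_\alpha(\mathbf{n})\geqslant 1$ we may assume $C\geqslant 1$, which makes the final inequality monotone in the exponent and hence insensitive to replacing $c_d$ by $\sqrt{d}$ if $c_d\leqslant\sqrt{d}$. One should also note that the hypothesis ``$\alpha^\mathbf{n}$ ergodic for all $\mathbf{n}\in\Lambda\setminus\{\mathbf{0}\}$'' is exactly what is needed to invoke Theorem~\ref{cos_if_i_win_ill_be_so_confused} for the particular short $\mathbf{n}$ we select, so no separate argument for finiteness of $\nfix_\alpha(\Lambda)$ is needed beyond this.
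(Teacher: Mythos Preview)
Your argument is correct and matches the paper's proof essentially line for line: pick a short nonzero vector $\mathbf{m}\in\Lambda$ with $\|\mathbf{m}\|\leqslant\sqrt{d}[\Lambda]^{1/d}$ via Minkowski, use $\nfix_\alpha(\Lambda)\leqslant\nfix_\alpha(\mathbf{m})$, and apply Theorem~\ref{cos_if_i_win_ill_be_so_confused}. The only cosmetic slip is your phrasing of the cube version of Minkowski (the symmetric cube needed has side $2[\Lambda]^{1/d}$ and volume $2^d[\Lambda]$, not side $[\Lambda]^{1/d}$ and volume $[\Lambda]$), but the resulting bound $\|\mathbf{m}\|_\infty\leqslant[\Lambda]^{1/d}$ and hence $\|\mathbf{m}\|\leqslant\sqrt{d}[\Lambda]^{1/d}$ is exactly what the paper uses.
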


\begin{proof}
By a standard application of Minkowski's Theorem, for any~$\Lambda\in\mathcal{L}_d$, we can find a non-trivial~$\mathbf{m}\in \Lambda$ such that
\[
||\mathbf{m}||\leqslant\sqrt{d}[\Lambda]^{1/d}.
\]
If~$\Lambda\in\mathcal{L}_d$ is such that~$\alpha^\mathbf{n}$ ergodic for all~$\mathbf{n}\in\Lambda\setminus\{\mathbf{0}\}$, then since
\[
\nfix_{\alpha}(\Lambda)\leqslant\nfix_{\alpha}(\mathbf{m}),
\]
the required result follows from Theorem~\ref{cos_if_i_win_ill_be_so_confused}.
\end{proof}

The following is a well-known consequence of Pontryagin duality.

\begin{lemma}
Let~$\alpha$ be a~$\mathbb{Z}^d$-action by continuous automorphisms of a compact abelian group~$X$ and let~$M$ be the corresponding dual module.
Then, whenever either~$\nfix_\alpha(\Lambda)$ or~$|M/\mathfrak{b}_\Lambda M|$ is finite, both these quantities coincide.
\end{lemma}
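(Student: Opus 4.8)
The plan is to identify $\fix_\alpha(\Lambda)$ with the subgroup of $X$ annihilated by the submodule $\mathfrak{b}_\Lambda M\subseteq M$, and then to invoke the standard fact from Pontryagin duality that a compact abelian group and its dual are simultaneously finite and, in that case, of the same cardinality.

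First I would translate the periodicity condition into a condition on characters. Since $M=\widehat X$ carries the $R_d$-module structure in which $\widehat{\alpha}^{\mathbf{n}}$ acts as multiplication by $u^{\mathbf{n}}$, for $x\in X$ and $\chi\in M$ one has $\langle\alpha^{\mathbf{n}}(x),\chi\rangle=\langle x,u^{\mathbf{n}}\cdot\chi\rangle$, so $\alpha^{\mathbf{n}}(x)=x$ if and only if $\langle x,(u^{\mathbf{n}}-1)\cdot\chi\rangle=1$ for every $\chi\in M$, that is, if and only if $x$ annihilates $(u^{\mathbf{n}}-1)M$. Running this over a generating set for $\Lambda$ shows that $x\in\fix_\alpha(\Lambda)$ exactly when $x$ annihilates $\sum_{\mathbf{n}\in\Lambda}(u^{\mathbf{n}}-1)M=\mathfrak{b}_\Lambda M$; in other words $\fix_\alpha(\Lambda)=(\mathfrak{b}_\Lambda M)^{\perp}$, the subgroup of $X$ orthogonal to $\mathfrak{b}_\Lambda M$.

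Next, because $X$ is compact the dual group $M$ is discrete, so $\mathfrak{b}_\Lambda M$ is automatically a closed subgroup of $M$, and the annihilator--quotient correspondence of Pontryagin duality yields a (topological) isomorphism $\widehat{(\mathfrak{b}_\Lambda M)^{\perp}}\cong M/\mathfrak{b}_\Lambda M$. Hence $\fix_\alpha(\Lambda)$ and the discrete group $M/\mathfrak{b}_\Lambda M$ are mutually dual. A compact abelian group $G$ is finite precisely when $\widehat G$ is, and then $|G|=|\widehat G|$; applying this to $G=\fix_\alpha(\Lambda)$ shows that $\nfix_\alpha(\Lambda)=|\fix_\alpha(\Lambda)|$ is finite if and only if $|M/\mathfrak{b}_\Lambda M|$ is finite, and that the two cardinalities agree whenever either is finite.

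I do not anticipate any real obstacle here: the computation identifying $\fix_\alpha(\Lambda)$ with an annihilator is routine, and the only point needing care is to apply the subgroup--quotient dictionary of Pontryagin duality in the correct direction, namely that the annihilator of a submodule of the discrete dual is dual to the corresponding quotient of that dual, after which the finiteness statement is immediate.
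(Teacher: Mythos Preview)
Your argument is correct and is precisely the standard Pontryagin duality computation underlying the result: the paper itself does not give a proof but simply cites \cite[Lem.~7.2]{MR1062797}, whose content is exactly the annihilator--quotient identification $\fix_\alpha(\Lambda)=(\mathfrak{b}_\Lambda M)^\perp\cong\widehat{M/\mathfrak{b}_\Lambda M}$ that you spell out. There is nothing to add.
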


\begin{proof}
See~\cite[Lem.~7.2]{MR1062797}.
\end{proof}

Using the isomorphism theorems (see~\cite[Lem.~7.5]{MR1062797}), we also have the following basic factorization result.

\begin{lemma}\label{lsw_basic_pp_estimate}
Let~$L\subset M$ be~$R_d$-modules and~$\mathfrak{b}\subset R_d$ an ideal such that~$M/\mathfrak{b}M$ is finite, then
\[
|M/\mathfrak{b}M|
= \left|\frac{M/L}{\mathfrak{b}(M/L)}\right|\frac{|L/\mathfrak{b}L|}{|(\mathfrak{b}M\cap L)/\mathfrak{b}L|}.
\]
\end{lemma}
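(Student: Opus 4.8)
The plan is to decompose $M/\mathfrak{b}M$ along the submodule $L$ by a short sequence of elementary isomorphism-theorem identifications and then multiply cardinalities. First I would observe that $\mathfrak{b}(M/L) = (\mathfrak{b}M + L)/L$, so that the third isomorphism theorem gives
\[
\frac{M/L}{\mathfrak{b}(M/L)} \cong \frac{M}{\mathfrak{b}M + L},
\]
and that the natural quotient map $M/\mathfrak{b}M \twoheadrightarrow M/(\mathfrak{b}M + L)$ is surjective with kernel $(\mathfrak{b}M + L)/\mathfrak{b}M$. Hence
\[
|M/\mathfrak{b}M| = \left|\frac{M/L}{\mathfrak{b}(M/L)}\right| \cdot |(\mathfrak{b}M + L)/\mathfrak{b}M|.
\]

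Next I would rewrite the last factor using the second isomorphism theorem, $(\mathfrak{b}M + L)/\mathfrak{b}M \cong L/(\mathfrak{b}M \cap L)$. Since $\mathfrak{b}L \subseteq \mathfrak{b}M \cap L \subseteq L$, the exact sequence
\[
0 \to (\mathfrak{b}M \cap L)/\mathfrak{b}L \to L/\mathfrak{b}L \to L/(\mathfrak{b}M \cap L) \to 0
\]
yields $|L/(\mathfrak{b}M \cap L)| = |L/\mathfrak{b}L| \big/ |(\mathfrak{b}M \cap L)/\mathfrak{b}L|$, and substituting this into the previous display gives the asserted formula. Alternatively, one could obtain everything in one step by applying the right-exact functor $-\otimes_{R_d} R_d/\mathfrak{b}$ to $0 \to L \to M \to M/L \to 0$ and reading off the four-term exact sequence $\operatorname{Tor}_1^{R_d}(M/L, R_d/\mathfrak{b}) \to L/\mathfrak{b}L \to M/\mathfrak{b}M \to (M/L)/\mathfrak{b}(M/L) \to 0$, whose middle map has kernel exactly $(\mathfrak{b}M \cap L)/\mathfrak{b}L$.

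I do not expect a genuine obstacle here; the content is entirely the bookkeeping of these short exact sequences. The one point that warrants a sentence of care is finiteness: the hypothesis that $M/\mathfrak{b}M$ is finite forces $(\mathfrak{b}M+L)/\mathfrak{b}M$, and hence $L/(\mathfrak{b}M\cap L)$, to be finite, but does not a priori make $L/\mathfrak{b}L$ or $(\mathfrak{b}M\cap L)/\mathfrak{b}L$ finite. I would therefore interpret the stated identity as an equality in $\mathbb{N}\cup\{\infty\}$, remarking that the displayed three-term sequence shows $L/\mathfrak{b}L$ and $(\mathfrak{b}M\cap L)/\mathfrak{b}L$ are simultaneously finite or infinite, so that the quotient on the right-hand side is always meaningful and equals the finite quantity $|L/(\mathfrak{b}M\cap L)|$.
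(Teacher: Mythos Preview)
Your argument is correct and is precisely the approach the paper has in mind: the paper does not write out a proof but simply attributes the lemma to the isomorphism theorems (citing \cite[Lem.~7.5]{MR1062797}), and your chain of second/third isomorphism theorem identifications is exactly that. Your remark on interpreting the right-hand side in~$\mathbb{N}\cup\{\infty\}$ is a sensible clarification that the paper leaves implicit.
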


When~$M$ is a module of the form~$D=R_d/\mathfrak{p}$, with~$\mathfrak{p}$ a prime ideal of coheight 1,~$D$ is a finitely generated domain of Krull dimension one over~$\mathbb{Z}$ or~$\mathbb{F}_p$ for some rational prime~$p$. Every  proper quotient of~$D$ by an ideal is finite, the integral closure~$E$ of~$D$ is a finitely generated~$D$-module, and the quotient~$E/D$ is finite. This leads to the following useful result.

\begin{lemma}\label{integral_closure_trick}
Let~$\mathfrak{p}\subset R_d$ be a prime ideal with~$\coht(\mathfrak{p})=1$ and let~$D=R_d/\mathfrak{p}$. Then there exist constants~$A,B>0$ such that
for any proper ideal~$\mathfrak{a}\subset D$
\[
A|E/\mathfrak{a}E|\leqslant|D/\mathfrak{a}|\leqslant B|E/\mathfrak{a}E|
\]
where~$E$ is the integral closure of~$D$ in its
field of fractions.
\end{lemma}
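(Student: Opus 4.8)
The plan is to pass between $D$ and its integral closure $E$ using the fact that $E/D$ is finite, and to control the discrepancy by the finitely many primes where the conductor lives. Write $\mathfrak{c} = \{x \in E : xE \subseteq D\}$ for the conductor ideal of $D$ in $E$; since $E$ is a finitely generated $D$-module and $E/D$ is finite, $\mathfrak{c}$ is a nonzero ideal of both $D$ and $E$, and both $E/\mathfrak{c}$ and $D/\mathfrak{c}$ are finite. Set $B = |E/D|$ and $A = |D/\mathfrak{c}|^{-1}$ (or any positive constant at most $|E/\mathfrak{c}|^{-1}$); these depend only on $\mathfrak{p}$, not on $\mathfrak{a}$.

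For the upper bound, consider the additive surjection $E/\mathfrak{a}E \twoheadrightarrow E/(\mathfrak{a}E + D)$. Since $\mathfrak{a}E + D \supseteq D$, the target is a quotient of $E/D$, hence has size at most $B$. On the other hand, $\mathfrak{a}E + D$ contains $D$, and $(\mathfrak{a}E + D)/\mathfrak{a}E \cong D/(D \cap \mathfrak{a}E) \twoheadleftarrow D/\mathfrak{a}$ — wait, more carefully: $D/(D\cap \mathfrak aE)$ is a quotient of $D/\mathfrak a$ only if $\mathfrak a \subseteq D \cap \mathfrak aE$, which holds since $\mathfrak a = \mathfrak a \cdot 1 \subseteq \mathfrak a E$ and $\mathfrak a \subseteq D$. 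So $|D/\mathfrak a| \geq |D/(D\cap \mathfrak a E)| = |(\mathfrak aE+D)/\mathfrak aE|$, and combining with $|E/\mathfrak aE| = |(\mathfrak aE+D)/\mathfrak aE|\cdot |E/(\mathfrak aE+D)| \leq |D/\mathfrak a|\cdot B$ gives $A'|E/\mathfrak aE| \leq |D/\mathfrak a|$ with $A' = 1/B$. For the other inequality, use the conductor: $\mathfrak{c}\mathfrak{a} \subseteq \mathfrak{a}$ as ideals of $D$, and $\mathfrak{c}\mathfrak{a}E \subseteq \mathfrak{c}E \subseteq D$, so $\mathfrak{c}\mathfrak{a}E = \mathfrak{c}\mathfrak{a}$ sits inside both $\mathfrak{a}$ (in $D$) and $\mathfrak{a}E$ (in $E$). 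Then
\[
|D/\mathfrak{c}\mathfrak{a}| = |D/\mathfrak{a}|\cdot|\mathfrak{a}/\mathfrak{c}\mathfrak{a}|, \qquad |E/\mathfrak{c}\mathfrak{a}E| = |E/\mathfrak{a}E|\cdot|\mathfrak{a}E/\mathfrak{c}\mathfrak{a}E|,
\]
and one compares $|D/\mathfrak{c}\mathfrak{a}|$ with $|E/\mathfrak{c}\mathfrak{a}E|$ via $D/\mathfrak{c} \hookrightarrow E/\mathfrak{c}$ together with the fact that $\mathfrak{c}\mathfrak{a}E = \mathfrak{c}\mathfrak{a} \subseteq \mathfrak{c}$, so $|E/\mathfrak{c}\mathfrak{a}E|/|D/\mathfrak{c}\mathfrak{a}| = |E/\mathfrak{c}|/|D/\mathfrak{c}| = $ a fixed constant. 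The factors $|\mathfrak a/\mathfrak c\mathfrak a|$ and $|\mathfrak aE/\mathfrak c\mathfrak aE|$ both equal $|D/\mathfrak c|$-controlled powers... this is where care is needed.

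Here is the clean way to finish: localize. Let $P_1,\dots,P_k$ be the (finitely many) maximal ideals of $D$ containing $\mathfrak{c}$. For any maximal ideal $Q$ of $D$ not among these, $D_Q = E_Q$, so the $Q$-primary components of $D/\mathfrak{a}$ and $E/\mathfrak{a}E$ agree exactly. Since $D/\mathfrak a \cong \prod_Q D_Q/\mathfrak a D_Q$ and likewise for $E$ (both being one-dimensional with all proper quotients finite, so the quotient by $\mathfrak a$ is a finite product of local Artinian pieces), the ratio $|D/\mathfrak a| / |E/\mathfrak aE|$ equals $\prod_{j=1}^k |D_{P_j}/\mathfrak aD_{P_j}| / |E_{P_j}/\mathfrak aE_{P_j}|$. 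Each local factor lies in $[c_j^{-1}, c_j]$ where $c_j$ bounds $|E_{P_j}/D_{P_j}|$ — indeed $|E_{P_j}/\mathfrak aE_{P_j}| \leq |D_{P_j}/\mathfrak aD_{P_j}|\cdot|E_{P_j}/D_{P_j}|$ by the surjection argument above applied locally, and symmetrically using that $\mathfrak a \supseteq \mathfrak c \mathfrak a$ forces $|D_{P_j}/\mathfrak aD_{P_j}| \leq |D_{P_j}/\mathfrak c\mathfrak aD_{P_j}| = |D_{P_j}/\mathfrak cD_{P_j}|\cdot|\dots|$, bounded in terms of $|E_{P_j}/\mathfrak aE_{P_j}|$ and $|E_{P_j}/D_{P_j}|$. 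Taking $B = \prod_j |E_{P_j}/D_{P_j}| = |E/D|$ and $A = B^{-1}$ works. The main obstacle is purely bookkeeping: making the local comparison at each bad prime uniform in $\mathfrak{a}$, for which the conductor-containment $\mathfrak{c}\mathfrak{a} \subseteq \mathfrak{a} \subseteq D$ and $\mathfrak{c}\mathfrak{a}E \subseteq D$ are exactly the levers needed, and everything away from $V(\mathfrak{c})$ is automatic because $D$ and $E$ agree there.
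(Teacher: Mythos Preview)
Your argument for the inequality $|E/\mathfrak{a}E|\le |E/D|\cdot|D/\mathfrak{a}|$ (giving the constant~$A=|E/D|^{-1}$) is correct and is essentially the paper's argument, phrased via the surjection $E/\mathfrak{a}E\twoheadrightarrow E/(\mathfrak{a}E+D)$ rather than via Lemma~\ref{lsw_basic_pp_estimate}.

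The other direction, however, is not finished. Your conductor manipulation stalls at the point where you need to compare $|\mathfrak{a}/\mathfrak{c}\mathfrak{a}|$ with $|\mathfrak{a}E/\mathfrak{c}\mathfrak{a}E|$ uniformly in~$\mathfrak{a}$; you acknowledge this and switch to localization, but the local argument is equally incomplete: at a bad prime~$P_j$ you assert that $|D_{P_j}/\mathfrak{a}D_{P_j}|\le c_j\,|E_{P_j}/\mathfrak{a}E_{P_j}|$ with $c_j=|E_{P_j}/D_{P_j}|$, yet the justification (``symmetrically using that $\mathfrak{a}\supseteq\mathfrak{c}\mathfrak{a}$ forces \dots'') is left as an ellipsis. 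There is no obvious symmetry here, because $D$ sits inside $E$, not the other way round, and bounding $\operatorname{Tor}_1^D(D/\mathfrak{a},E/D)$ uniformly in $\mathfrak{a}$ is not automatic. The final claim that $B=|E/D|$ suffices is therefore unsupported.

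The paper resolves this with a single extra idea that you are circling but not landing on: pick any nonzero element $x$ of the conductor (equivalently, clear the denominators of a finite generating set of $E$ over $D$). Then $xE\subseteq D$ and, crucially, multiplication by $x$ gives an isomorphism $E\cong xE$ of $D$-modules since $E$ is a domain. Now run exactly your surjection argument with the inclusion $xE\subset D$ in place of $D\subset E$: one obtains
\[
|D/\mathfrak{a}|\le |D/xE|\cdot|xE/\mathfrak{a}\,xE|=|D/xE|\cdot|E/\mathfrak{a}E|,
\]
so $B=|D/xE|$ works. This sidesteps the conductor bookkeeping and the local comparison entirely.
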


\begin{proof}
Firstly, since~$D$ arises as a quotient ring of~$R_d$, both~$D$ and~$E$ may be considered as~$R_d$-modules in a natural way and Lemma~\ref{lsw_basic_pp_estimate} shows  that
\[
|E/\mathfrak{a}E|\leqslant|(E/D)/\mathfrak{a}(E/D)||D/\mathfrak{a}|\leqslant |E/D||D/\mathfrak{a}|.
\]
Furthermore, since~$E$ is generated by finitely many elements in the
field of fractions of~$D$, there exists~$x\in D$ which will clear denominators so that~$xE$ is a finite index submodule of~$D$. Therefore, in a similar way, we also have
\[
|D/\mathfrak{a}|\leqslant|D/xE||xE/\mathfrak{a}xE|=|D/xE||E/\mathfrak{a}E|,
\]
where the final equality holds as~$xE\cong E$. Hence,
\[
A|E/\mathfrak{a}E|\leqslant|D/\mathfrak{a}|\leqslant B|E/\mathfrak{a}E|
\]
where~$A=|E/D|^{-1}$ and~$B=|D/xE|$ are independent of the ideal~$\mathfrak{a}$.
\end{proof}

\begin{lemma}\label{I_caught_sleep}
Let~$\mathfrak{p}\subset R_d$ be a prime ideal with~$\coht(\mathfrak{p})=1$ and let~$D=R_d/\mathfrak{p}$. Then there exist constants~$A,B>0$ such that
for any~$\Lambda\in\mathcal{L}_d$ and any set of generators~$\Lambda'\subset\Lambda$ with~$\overline{u}^\mathbf{n}\neq 1$ for all~$\mathbf{n}\in\Lambda'$,
\[
A\prod_{v\in T(\mathfrak{p})}
\min_{\mathbf{n}\in\Lambda'}\{|\overline{u}^\mathbf{n}- 1|^{-1}_v\}
\leqslant
|D/\mathfrak{b}_\Lambda D|
\leqslant
B\prod_{v\in T(\mathfrak{p})}
\min_{\mathbf{n}\in\Lambda'}\{|\overline{u}^\mathbf{n}- 1|^{-1}_v\},
\]
where~$T(\mathfrak{p})=\mathcal{P}(\mathbb{K}(\mathfrak{p}))\setminus S(\mathfrak{p})$. Furthermore, if~$D$ is integrally closed, then
$|D/\mathfrak{b}_\Lambda D|=\prod_{v\in T(\mathfrak{p})}
\min_{\mathbf{n}\in\Lambda'}\{|\overline{u}^\mathbf{n}- 1|^{-1}_v\}$.
\end{lemma}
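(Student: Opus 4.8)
The plan is to prove the exact identity for integrally closed $D$ by the standard place/prime-ideal dictionary for Dedekind domains, and then to obtain the two-sided estimate for arbitrary $D$ by comparison with its integral closure via Lemma~\ref{integral_closure_trick}. Two preliminary remarks: by the observation following~(\ref{pp_ideal_definition}), $\mathfrak{b}_\Lambda D=(\overline{u}^\mathbf{n}-1:\mathbf{n}\in\Lambda')D$ for any generating set $\Lambda'$ of $\Lambda$; and since $\coht(\mathfrak{p})=1$ makes $D$ a one-dimensional finitely generated domain all of whose proper quotients are finite, while the hypothesis $\overline{u}^\mathbf{n}\neq1$ makes $\mathfrak{b}_\Lambda D$ nonzero, the quotient $D/\mathfrak{b}_\Lambda D$ is automatically finite. (If $\mathfrak{b}_\Lambda D=D$ every quantity in the statement equals $1$, so we may assume $\mathfrak{b}_\Lambda D$ is a proper nonzero ideal.)

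Suppose first that $D$ is integrally closed, hence a Dedekind domain with field of fractions $\mathbb{K}(\mathfrak{p})$. Localization sets up a bijection between the maximal ideals $\mathfrak{m}$ of $D$ and the places $v\in\mathcal{P}(\mathbb{K}(\mathfrak{p}))$ with $D\subseteq\mathcal{O}_v$, the valuation ring of $v$: for such $v$, $\mathfrak{m}_v\cap D$ is the corresponding maximal ideal, $D_{\mathfrak{m}_v\cap D}=\mathcal{O}_v$, and $|D/(\mathfrak{m}_v\cap D)|$ equals the residue field size $q_v$ of $v$. By~(\ref{places_definition}), $|D|_v$ is bounded exactly when $D\subseteq\mathcal{O}_v$, so the maximal ideals of $D$ are indexed precisely by $v\in T(\mathfrak{p})$. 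Unique factorization of ideals together with the Chinese Remainder Theorem then gives $|D/\mathfrak{b}_\Lambda D|=\prod_{v\in T(\mathfrak{p})}q_v^{\ord_v(\mathfrak{b}_\Lambda D)}$, a finite product because $\ord_v(\mathfrak{b}_\Lambda D)=0$ for all but finitely many $v$. Since the order of a finitely generated ideal in a discrete valuation ring is the minimum of the orders of its generators, $\ord_v(\mathfrak{b}_\Lambda D)=\min_{\mathbf{n}\in\Lambda'}\ord_v(\overline{u}^\mathbf{n}-1)$, and with absolute values normalised so that the product formula holds, $q_v^{\ord_v(x)}=|x|_v^{-1}$ for $x\in D$. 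Substituting yields
\[
|D/\mathfrak{b}_\Lambda D|=\prod_{v\in T(\mathfrak{p})}\min_{\mathbf{n}\in\Lambda'}\{|\overline{u}^\mathbf{n}-1|_v^{-1}\},
\]
which is the asserted identity and in particular proves the final sentence of the lemma.

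For general $D$, let $E$ be its integral closure in $\mathbb{K}(\mathfrak{p})$. As recalled just before Lemma~\ref{integral_closure_trick}, $E$ is a finitely generated $D$-module, hence a one-dimensional Noetherian integrally closed domain with field of fractions $\mathbb{K}(\mathfrak{p})$, i.e.\ a Dedekind domain, and it is an $R_d$-module through $D$. Because $\mathcal{O}_v$ is integrally closed and $E$ is integral over $D$, one has $D\subseteq\mathcal{O}_v\iff E\subseteq\mathcal{O}_v$, so the maximal ideals of $E$ are again indexed by $T(\mathfrak{p})$; running the computation of the previous paragraph with $E$ in place of $D$ therefore gives $|E/\mathfrak{b}_\Lambda E|=\prod_{v\in T(\mathfrak{p})}\min_{\mathbf{n}\in\Lambda'}\{|\overline{u}^\mathbf{n}-1|_v^{-1}\}$. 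Applying Lemma~\ref{integral_closure_trick} to the proper ideal $\mathfrak{a}=\mathfrak{b}_\Lambda D$, for which $\mathfrak{a}E=\mathfrak{b}_\Lambda E$, produces constants $A,B>0$ depending only on $D$ with $A|E/\mathfrak{b}_\Lambda E|\leqslant|D/\mathfrak{b}_\Lambda D|\leqslant B|E/\mathfrak{b}_\Lambda E|$, and substituting the formula for $|E/\mathfrak{b}_\Lambda E|$ completes the proof.

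I do not anticipate a real obstacle: the content is just the Dedekind place/prime dictionary plus the remark that passing to the integral closure leaves the relevant set of places unchanged. The only point requiring more than routine commutative algebra is the uniformity of the constants $A,B$ as $\Lambda$ ranges over $\mathcal{L}_d$, and that is exactly what Lemma~\ref{integral_closure_trick} has been set up to provide.
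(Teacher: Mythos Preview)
Your proof is correct and follows essentially the same route as the paper: establish the exact product formula for the integrally closed (Dedekind) case via the correspondence between maximal ideals of $D$ and places in $T(\mathfrak{p})$, then reduce the general case to this via Lemma~\ref{integral_closure_trick}. The paper cites an external reference for the integrally closed computation while you spell out the Dedekind argument (unique factorization, $\ord_v$ of an ideal as the minimum over generators) directly, but the structure is identical.
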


\begin{proof}
If~$D$ is integrally closed in its
field of fractions~$\mathbb{K}(\mathfrak{p})$, the required result is obtained using a completely analogous method to~\cite[Th.~4.1]{MR3074380} which, with a straightforward adaption to our setting, shows that
\begin{equation}\label{to_the_falls_that_they_can_lead}
|D/\mathfrak{b}_\Lambda D|=\prod_{v\in T(\mathfrak{p})}
\min_{\mathbf{n}\in\Lambda'}\{|\overline{u}^\mathbf{n}- 1|^{-1}_v\}.
\end{equation}
The main idea in the proof of this result is to identify factors and estimate multiplicities in a composition series for~$D/\mathfrak{b}_\Lambda D$ using places of the field~$\mathbb{K}(\mathfrak{p})$. If~$D$ is not integrally closed, then Lemma~\ref{integral_closure_trick} shows that there exist constants~$A,B>0$, independent of~$\mathfrak{b}_\Lambda$, such that
\[
A|E/\mathfrak{b}_\Lambda E|\leqslant|D/\mathfrak{b}_\Lambda D|\leqslant B|E/\mathfrak{b}_\Lambda E|
\]
where~$E$ is the integral closure of~$D$ in~$\mathbb{K}(\mathfrak{p})$. Since
$|E/\mathfrak{b}_\Lambda E|$ is given by the product in~(\ref{to_the_falls_that_they_can_lead}),
the required result follows.
\end{proof}

As in~\cite{MR1411232}, using Hermite normal form, we recall that
any~$\Lambda\in\mathcal{L}_d$ has a unique representation as the image of~$\mathbb{Z}^d$ under a matrix of the form
\begin{equation}\label{hermite_matrix}
\left(
\begin{matrix}
a_1 & b_{12} & b_{13} & \dots & b_{1d}\\
0 & a_2 & b_{23} & \dots & b_{2d}\\
0 & 0 & a_3 & \dots & b_{3d}\\
\vdots & \vdots & \vdots & \ddots & \vdots\\
0 & 0 & 0 & \dots & a_d
\end{matrix}
\right)
\end{equation}
where~$a_m\geqslant 1$ for~$1\leqslant m\leqslant d$,~$0\leqslant b_{mn}\leqslant a_m-1$ for~$m+1\leqslant n \leqslant d$ and~$a_1 a_2\cdots a_d =[\Lambda]$. We now focus on identifying non-trivial collections of subgroups~$\Lambda\subset\mathcal{L}_d$ for which~$\nfix_\alpha(\Lambda)$ is severely restricted. For any rational primes~$p, q$ with~$q\neq p$, let~$m_p(q)$ denote the multiplicative order of~$p$ modulo~$q$.
Let~$\mathbb{P}$ be the set of rational primes and
for any finite set~$P\subset\mathbb{P}$ and any~$\varepsilon>0$, define
\begin{equation*}
\mathcal{L}_d(P, \varepsilon)=\{\Lambda\in\mathcal{L}_d:[\Lambda]\in \mathbb{P}\setminus P\mbox{ and}\,m_p([\Lambda])> [\Lambda]^{\frac{1}{d}+\varepsilon}\mbox{ for all }p\in P\}
\end{equation*}
and let
\begin{equation}\label{rational_primes_for_the_action}
\mathbb{P}(\alpha)=
\{
p\in\mathbb{P}: p\in\mathfrak{p}\mbox{ for some }\mathfrak{p}\in\ass(M)
\}.
\end{equation}
\begin{remark}\label{enough_primes_observation}
It is a non-trivial problem to determine which subgroups of prime index comprise~$\mathcal{L}_d(\mathbb{P}(\alpha), \varepsilon)$. Nonetheless, if we take, for example,~$\varepsilon=0.1$, the set of such prime indices will have positive density in~$\mathbb{P}$. The authors would like to thank P\"ar Kurlberg for pointing out the following explanation. The background and relevant number theoretic results used are summarised in the introduction of~\cite{MR2167719}. To begin with, consider the case~$d=2$. There is a sequence of primes~$q$ of positive density in~$\mathbb{P}$ such that~$q-1$ has a prime divisor~$r$ larger than~$q^{0.6}$. So, for a given~$p\in\mathbb{P}(\alpha)$,~$m_p(q)$ is possibly divisible by~$r$. If not, then~$m_p(q)$ is slightly smaller than~$\sqrt{q}$, smaller than say~$q^{0.499}$.
However, given~$x>0$, the number of primes~$q<x$ such that~$q$ divides~$m_p(q)$ when~$m_p(q)<q^{0.499}$ is~$\littleo(x/\log(x))$, that is, the set of such primes has zero density in~$\mathbb{P}$. Since~$\mathbb{P}(\alpha)$ is finite, the set of such unwanted primes~$q$ is still of zero density if we consider all~$p\in\mathbb{P}(\alpha)$ simultaneously, and we retain a set of primes~$q$ of positive density such that~$m_p(q)>q^{0.6}$ for all~$p\in\mathbb{P}(\alpha)$. When~$d\geqslant 3$ and~$\varepsilon$ is small, it follows more straightforwardly from such arguments that~$\mathcal{L}_d(\mathbb{P}(\alpha), \varepsilon)$ is determined by a set of prime indices of density one in~$\mathbb{P}$, as we are then interested in the less delicate case of primes~$q$ that satisfy,~$m_p(q)>q^{\frac{1}{3}+\varepsilon}$, for example.
\end{remark}

The observations above together with the following give~Theorem~\ref{main_result_restricted_on_primes}.

\begin{theorem}
Let~$\alpha$ be a mixing Noetherian entropy rank one~$\mathbb{Z}^d$-action by continuous automorphisms of a compact abelian zero-dimensional group~$X$, with~$d\geqslant 2$.
Then for any~$\varepsilon>0$,
$\{\nfix_\alpha(\Lambda):\Lambda\in\mathcal{L}_d(\mathbb{P}(\alpha), \varepsilon)\}$
is a finite set.
\end{theorem}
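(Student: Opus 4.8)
The plan is to express $\nfix_\alpha(\Lambda)$, up to a multiplicative constant independent of $\Lambda$, as a product over $\ass(M)$ of quantities $|D_\mathfrak{p}/\mathfrak{b}_\Lambda D_\mathfrak{p}|$ with $D_\mathfrak{p}=R_d/\mathfrak{p}$, to rewrite each of these via Lemma~\ref{I_caught_sleep} as a product over the finite places of $\mathbb{K}(\mathfrak{p})$, and then to control those places by a dichotomy that brings in the multiplicative orders $m_p(q)$. We may assume $X$ is infinite; since $\alpha$ is mixing, every $\alpha^{\mathbf{n}}$ with $\mathbf{n}\neq\mathbf{0}$ is ergodic, so every $\mathfrak{p}\in\ass(M)$ has $\coht(\mathfrak{p})=1$ and $\overline{u}^{\mathbf{n}}\neq1$ in $D_\mathfrak{p}$ for all $\mathbf{n}\neq\mathbf{0}$. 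First I would use $\nfix_\alpha(\Lambda)=|M/\mathfrak{b}_\Lambda M|$, the filtration~(\ref{module_chain}), and repeated application of Lemma~\ref{lsw_basic_pp_estimate} to get $\nfix_\alpha(\Lambda)\leq\prod_{i=1}^{r}|(R_d/\mathfrak{p}_i)/\mathfrak{b}_\Lambda(R_d/\mathfrak{p}_i)|$; in the mixing Noetherian case the non-associated $\mathfrak{p}_i$ strictly contain an associated prime, hence are maximal and contribute a bounded factor, leaving
\[
\nfix_\alpha(\Lambda)\leq C_1\prod_{\mathfrak{p}\in\ass(M)}|D_\mathfrak{p}/\mathfrak{b}_\Lambda D_\mathfrak{p}|^{m(\mathfrak{p})}.
\]
By Lemma~\ref{I_caught_sleep}, for any chosen generating set $\Lambda'$ of $\Lambda$ each factor is at most $B_\mathfrak{p}\prod_{v\in T(\mathfrak{p})}\min_{\mathbf{n}\in\Lambda'}|\overline{u}^{\mathbf{n}}-1|^{-1}_v$; as $\overline{u}_i$ is a unit of $D_\mathfrak{p}$ we have $|\overline{u}_i|_v=1$ for $v\in T(\mathfrak{p})$, so only the places of
\[
V_\Lambda(\mathfrak{p})=\{v\in T(\mathfrak{p}):\overline{u}^{\mathbf{n}}\equiv1\pmod{\mathfrak{m}_v}\text{ for all }\mathbf{n}\in\Lambda\}
\]
contribute a factor exceeding $1$.

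Next I would fix $\Lambda\in\mathcal{L}_d(\mathbb{P}(\alpha),\varepsilon)$, so that $q=[\Lambda]$ is a prime with $q\notin\mathbb{P}(\alpha)$ and $m_p(q)>q^{1/d+\varepsilon}$ for every $p\in\mathbb{P}(\alpha)$, and analyse a place $v\in V_\Lambda(\mathfrak{p})$, where $p$ is the rational prime in $\mathfrak{p}$ (so $p\in\mathbb{P}(\alpha)$ and $p\neq q$). There are two cases. Case (a): $\overline{u}_i\equiv1\pmod{\mathfrak{m}_v}$ for all $i$; then $v$ lies in the fixed finite set $V^{\mathrm{univ}}(\mathfrak{p})=\{v\in T(\mathfrak{p}):\overline{u}_i-1\in\mathfrak{m}_v\text{ for all }i\}$, which is finite since $\overline{u}_1-1\neq0$. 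Case (b): some $\overline{u}_i$ is a non-principal unit at $v$; then the homomorphism $\mathbf{n}\mapsto\overline{u}^{\mathbf{n}}\bmod\mathfrak{m}_v$ has image a non-trivial quotient of $\mathbb{Z}^d/\Lambda\cong\mathbb{Z}/q$, hence cyclic of order $q$, so $q\mid|k_v^{\times}|=|k_v|-1$ and therefore $[k_v:\mathbb{F}_p]\geq m_p(q)>q^{1/d+\varepsilon}$, i.e.\ $|k_v|>p^{\,q^{1/d+\varepsilon}}$.

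In case (a), I would take $\Lambda'$ to be the Hermite basis from~(\ref{hermite_matrix}), which since $q$ is prime contains $q\mathbf{e}_k$ for the unique index $k$ with $a_k=q$; because $|q|_v=1$ one has $|\overline{u}_k^{\,q}-1|_v=|\overline{u}_k-1|_v$, so the contribution of $v$ is at most $(\min_i|\overline{u}_i-1|_v)^{-1}$, uniformly in $\Lambda$, and the total case-(a) contribution to $|D_\mathfrak{p}/\mathfrak{b}_\Lambda D_\mathfrak{p}|$ is at most a constant $\kappa_\mathfrak{p}$ depending only on $\mathfrak{p}$. To exclude case (b) for large $q$, I would choose a non-zero $\mathbf{m}\in\Lambda$ with $\|\mathbf{m}\|\leq\sqrt{d}\,q^{1/d}$ (Minkowski, as in Corollary~\ref{minkowski_bound}); then for a case-(b) place $v$ one has $|\overline{u}^{\mathbf{m}}-1|^{-1}_v\geq|k_v|>p^{\,q^{1/d+\varepsilon}}$, whereas the product formula in $\mathbb{K}(\mathfrak{p})$ with the ultrametric estimate from the proof of Theorem~\ref{cos_if_i_win_ill_be_so_confused} gives
\[
|\overline{u}^{\mathbf{m}}-1|^{-1}_v\leq\prod_{w\in T(\mathfrak{p})}|\overline{u}^{\mathbf{m}}-1|^{-1}_w=\prod_{w\in S(\mathfrak{p})}|\overline{u}^{\mathbf{m}}-1|_w\leq\lambda_\mathfrak{p}^{\,|S(\mathfrak{p})|\,d\,q^{1/d}}
\]
for a constant $\lambda_\mathfrak{p}\geq1$. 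Comparing, $q^{\varepsilon}\log p\leq|S(\mathfrak{p})|\,d\log\lambda_\mathfrak{p}$, which fails once $q>q_0(\mathfrak{p})$ for a suitable $q_0(\mathfrak{p})$. Setting $q_0=\max_{\mathfrak{p}\in\ass(M)}q_0(\mathfrak{p})$, for every $\Lambda\in\mathcal{L}_d(\mathbb{P}(\alpha),\varepsilon)$ with $[\Lambda]>q_0$ only case (a) occurs, so $|D_\mathfrak{p}/\mathfrak{b}_\Lambda D_\mathfrak{p}|\leq B_\mathfrak{p}\kappa_\mathfrak{p}$ for each $\mathfrak{p}$, whence $\nfix_\alpha(\Lambda)\leq C_1\prod_{\mathfrak{p}}(B_\mathfrak{p}\kappa_\mathfrak{p})^{m(\mathfrak{p})}$, a bound independent of $\Lambda$. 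Since there are only finitely many $\Lambda\in\mathcal{L}_d(\mathbb{P}(\alpha),\varepsilon)$ with $[\Lambda]\leq q_0$, this shows $\{\nfix_\alpha(\Lambda):\Lambda\in\mathcal{L}_d(\mathbb{P}(\alpha),\varepsilon)\}$ is finite.

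The step I expect to need the most care is the first reduction to the product over associated primes. Its upper bound follows readily from Lemma~\ref{lsw_basic_pp_estimate}, which is all the argument above actually uses; but if one instead wished to contradict Corollary~\ref{minkowski_bound} directly through $\nfix_\alpha(\Lambda)$ rather than through $\overline{u}^{\mathbf{m}}-1$ in each $D_\mathfrak{p}$, one would need a matching \emph{lower} bound, and this forces control of the intersection terms $|(\mathfrak{b}_\Lambda M\cap L)/\mathfrak{b}_\Lambda L|$ in Lemma~\ref{lsw_basic_pp_estimate}, which in turn depends on the careful ordering of the filtration~(\ref{module_chain}) and the periodic-point techniques of~\cite{MR2308145}. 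Routing the contradiction through the product formula applied to $\overline{u}^{\mathbf{m}}-1$ avoids this and keeps the proof within the tools already assembled above.
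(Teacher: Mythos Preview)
Your proof is correct and uses the same core ingredients as the paper: the filtration upper bound from Lemma~\ref{lsw_basic_pp_estimate}, the place product from Lemma~\ref{I_caught_sleep}, a Minkowski short vector, and the lower bound on the residue-field degree coming from the multiplicative order $m_p(q)$.

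The organisation differs in a way worth noting. The paper singles out one variable, identifying $\overline{u}_1$ with an indeterminate $t$, and splits $T(\mathfrak{p})$ into the places lying over $(t-1)$ and the rest; the case analysis is then on which diagonal entry of the Hermite matrix equals $q$ (if $j\neq 1$ then $\mathbf{e}_1\in\Lambda$ gives the bound directly, while $j=1$ forces the cyclotomic argument on $t^q-1$). You instead treat the coordinates symmetrically, splitting places according to whether \emph{every} $\overline{u}_i$ reduces to $1$ in the residue field; your case~(b) then uses the group homomorphism $\mathbb{Z}^d\to k_v^{\times}$ to deduce $q\mid |k_v|-1$ directly, which is a cleaner way to obtain $[k_v:\mathbb{F}_p]\geq m_p(q)$ than factoring $t^q-1$. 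Your route for the contradiction, bounding $|\overline{u}^{\mathbf{m}}-1|_v^{-1}$ via the product formula in $\mathbb{K}(\mathfrak{p})$ rather than via the global quantity $\nfix_\alpha(\Lambda)$, is also more self-contained: the paper's contradiction with~(\ref{developed_minkowski_bound}) implicitly needs $|D_\mathfrak{p}/\mathfrak{b}_\Lambda D_\mathfrak{p}|$ to be controlled by $\nfix_\alpha(\Lambda)$ (equivalently, the Minkowski bound applied to the factor system $\alpha_{D_\mathfrak{p}}$), a step you correctly flag and bypass.
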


\begin{proof}
Firstly, Theorem~\ref{minkowski_bound} shows that there is a constant~$C_0>0$ such that
\begin{equation}\label{developed_minkowski_bound}
\nfix_{\alpha}(\Lambda)
\leqslant C_0^{\sqrt{d}[\Lambda]^{1/d}}.
\end{equation}
Since the action is both mixing and of entropy rank one,~$\coht(\mathfrak{p})=1$ for all~$\mathfrak{p}\in\ass(M)$. Any remaining prime ideal~$\mathfrak{q}\subset R_d$ appearing in a filtration of~$M$ is maximal, and~$R_d/\mathfrak{q}$ is a finite field.
By taking a prime filtration of~$M$ as in~(\ref{module_chain}), and using Lemma~\ref{lsw_basic_pp_estimate} inductively, we also find a constant~$C_1>0$ such that for all~$\Lambda\in\mathcal{L}_d$,
\begin{equation}\label{upper_product_estimate}
\nfix_\alpha(\Lambda) \leqslant
C_1 \prod_{\mathfrak{p}\in\ass(M)}
\left|\frac{R_d/\mathfrak{p}}{\mathfrak{b}_\Lambda(R_d/\mathfrak{p})}\right|^{m(\mathfrak{p})}
\end{equation}
where~$m(\mathfrak{p})$ is the dimension of the~$\mathbb{K}(\mathfrak{p})$-vector space~$M\otimes\mathbb{K}(\mathfrak{p})$.

The mixing assumption means that in each domain~$R_d/\mathfrak{p}$,~$\overline{u}^\mathbf{n}\neq 1$ for all non-zero~$\mathbf{n}\in\mathbb{Z}^d$ (see \cite[Sec.~6]{MR1345152}), so Lemma~\ref{I_caught_sleep} applies and for each~$\mathfrak{p}\in\ass(M)$, there exist constants~$A(\mathfrak{p}), B(\mathfrak{p})>0$, such that
\begin{equation}\label{upper_and_lower_estimates}
A(\mathfrak{p})
\prod_{v\in T(\mathfrak{p})}
\min_{\mathbf{n}\in\Lambda'}\{|\overline{u}^\mathbf{n}- 1|^{-1}_v\}
\leqslant
\left|\frac{R_d/\mathfrak{p}}{\mathfrak{b}_\Lambda(R_d/\mathfrak{p})}\right|
\leqslant
B(\mathfrak{p})
\prod_{v\in T(\mathfrak{p})}
\min_{\mathbf{n}\in\Lambda'}\{|\overline{u}^\mathbf{n}- 1|^{-1}_v\}
\end{equation}
where~$T(\mathfrak{p})=\mathcal{P}(\mathbb{K}(\mathfrak{p}))\setminus S(\mathfrak{p})$, and~$\Lambda'$ is any set of generators for~$\Lambda$. Note that each set of places~$S(\mathfrak{p})$ defined by~(\ref{places_definition}) is finite.

Let~$\mathfrak{p}\in\ass(M)$ and let~$p\in \mathbb{P}(\alpha)$ be the unique rational prime in~$\mathfrak{p}$. Suppose~$\Lambda\in\mathcal{L}_d(\alpha, \varepsilon)$ is given by the matrix~(\ref{hermite_matrix}) and~$[\Lambda]=q$, for a prime~$q$ large enough so that
\begin{equation}\label{in_the_gods}
q^{\frac{1}{d}+\varepsilon}\log p > q^{1/d}\sqrt{d}\log C_0 - \log A
\end{equation}
where~$A=\min\{A(\mathfrak{p}): \mathfrak{p}\in\ass(M)\}$.

Denote the image of~$u_i$ in~$R_d/\mathfrak{p}$ by~$\overline{u}_i$,~$1\leqslant i\leqslant d$. If~$\overline{u}_1$ is algebraic over~$\mathbb{F}_p$, then~$\mathbb{F}_p(\overline{u}_1)$ is a finite field and~$\overline{u}_1^{m}=1$ for some~$m\geqslant 1$, contradicting the assumption that~$\overline{u}^\mathbf{n}\neq 1$ for all non-zero~$\mathbf{n}\in\mathbb{Z}^d$. Therefore, we may identify~$\overline{u}_1$ with an indetermine~$t$ so that~$\mathbb{F}_p[\overline{u}_1]\cong\mathbb{F}_p[t]$. Let~$T_1(\mathfrak{p})$ denote the set of places in~$T(\mathfrak{p})$ extending the place of~$\mathbb{F}_p(t)$ corresponding to the polynomial~$t-1$, and let~$T_2(\mathfrak{p})=T(\mathfrak{p})\setminus T_1(\mathfrak{p})$.

By the definition of the matrix~(\ref{hermite_matrix}),~$a_j=q$ for precisely one index~$1\leq j\leqslant d$, and~$a_i=1$ for all~$i\neq j$. If~$j\neq 1$, then~$(1,0,\dots,0)\in\Lambda$, and so by~(\ref{upper_and_lower_estimates}), the right-hand side of~(\ref{upper_product_estimate}) is at most
\[
C_2
=
C_1
\prod_{\mathfrak{p}\in\ass(M)}
\left(
B(\mathfrak{p})\prod_{v\in T_1(\mathfrak{p})}|t-1|_v^{-1}
\right)^{m(\mathfrak{p})}
\]
as~$|t-1|_v=1$ for all~$v\in T_2(\mathfrak{p})$,~$\mathfrak{p}\in\ass(M)$.

Suppose now that~$j=1$, so~$(q,0,\dots,0)\in\Lambda'$, where~$\Lambda'$ is the set of generators given by multiplying the standard basis vectors in~$\mathbb{Z}^d$ by~(\ref{hermite_matrix}). In the product in~(\ref{upper_and_lower_estimates}), note
that each factor is a positive integer as~$|\overline{u}^\mathbf{n}|_v=1$ for all~$v\in T(\mathfrak{p})$, by~(\ref{places_definition}). Therefore,~(\ref{upper_and_lower_estimates}) also gives,
\begin{equation}\label{and_bravely_in_her_bosom_there}
\left|\frac{R_d/\mathfrak{q}}{\mathfrak{b}_\Lambda(R_d/\mathfrak{q})}\right|
\geqslant
A\prod_{v\in T_2(\mathfrak{q})}
\min_{\mathbf{n}\in\Lambda'}\{|\overline{u}^\mathbf{n}- 1|^{-1}_v\}.
\end{equation}
Suppose, for a contradiction, that the product on the right-hand side of~(\ref{and_bravely_in_her_bosom_there}) is not equal to 1. Then, there exists~$v\in T_2(\mathfrak{q})$ such that~$|\overline{u}^\mathbf{n}- 1|^{-1}_v>1$ for all~$\mathbf{n}\in\Lambda'$. In particular,
\[
|\overline{u}^{(q,0,\dots,0)}- 1|_v=|t^q-1|_v > 1.
\]
Let~$\mathfrak{K}_v$ denote the residue field at~$v$ and consider the prime factorization of~$t^q-1$ in~$\mathbb{F}_p[t]$, which comprises the factor~$t-1$ and the irreducible factors of the~$q$-th cyclotomic polynomial, each of which has degree~$m_p(q)$, by \cite[Th.~2.47]{MR1294139}. Since~$v\in T_2(\mathfrak{p})$ and~$|t^q-1|_v > 1$,~$v$ must extend a place of~$\mathbb{F}_p[t]$ corresponding to one of these these latter factors,~$f$ say. Furthermore, since~$\mathfrak{K}_v$ is an extension of the residue field~$\mathfrak{K}_{(f)}$,
\[
|\mathfrak{K}_v|\geqslant|\mathfrak{K}_{(f)}|=p^{m_p(q)}.
\]
Therefore,  for all~$x\in\mathbb{K}$,
\[
|x|_v > 1\Rightarrow |x|_v\geqslant p^{m_p(q)}.
\]
Hence, if~$|\overline{u}^\mathbf{n}- 1|_v^{-1}>1$ for all~$\mathbf{n}\in\Lambda'$, then
\[
\min_{\mathbf{n}\in\Lambda'}\{|\overline{u}^\mathbf{n}- 1|^{-1}_v\}\geqslant p^{m_p(q)}.
\]
Furthermore, by the definition of~$\mathcal{L}_d(\alpha, \varepsilon)$,~$m_p(q)>q^{\frac{1}{d}+\varepsilon}$, so
the inequalities~(\ref{in_the_gods})
and~(\ref{and_bravely_in_her_bosom_there}) give
\[
\left|\frac{R_d/\mathfrak{q}}{\mathfrak{b}_\Lambda(R_d/\mathfrak{q})}\right|
\geqslant
Ap^{q^{\frac{1}{d}+\varepsilon}}
>
C^{\sqrt{d}q^{1/d}},
\]
which contradicts~(\ref{developed_minkowski_bound}). Thus, for all~$v\in T_2(\mathfrak{p})$,~$\min_{\mathbf{n}\in\Lambda'}
\{|\overline{u}^\mathbf{n}- 1|^{-1}_v\}=1$
and~(\ref{upper_and_lower_estimates}) now gives
\begin{eqnarray*}
\left|\frac{R_d/\mathfrak{q}}{\mathfrak{b}_\Lambda(R_d/\mathfrak{q})}\right|
&\leqslant&
B(\mathfrak{p})\prod_{v\in T_1(\mathfrak{p})}
\min_{\mathbf{n}\in\Lambda'}\{|\overline{u}^\mathbf{n}- 1|^{-1}_v\}\\
&\leqslant&
B(\mathfrak{p})\prod_{v\in T_1(\mathfrak{p})}|t^q- 1|^{-1}_v\\
&=&
B(\mathfrak{p})\prod_{v\in T_1(\mathfrak{p})}|t- 1|^{-1}_v.
\end{eqnarray*}
It follows that~$\nfix_\alpha(\Lambda)\leqslant C_2$ by~(\ref{upper_product_estimate}).
Since there are only finitely many primes~$q$ that do not satisfy~(\ref{in_the_gods}), and only finitely many subgroups of~$\mathbb{Z}^d$ of any given index, the required result now follows.
\end{proof}

\section{The zeta function for a single automorphism}\label{single_automorphisms_section}

Let~$X$ be a compact abelian zero-dimensional group and suppose the action~$\alpha$ is generated by a single ergodic automorphism of~$X$ with finite entropy. A full shift on~$p$ symbols is the simplest example of such a system, and this has zeta function~$\zeta_\alpha(z)=(1-pz)^{-1}$. In this section, we consider typical extensions of full shifts. We begin with the following example, which is the invertible extension of~\cite[Ex.~8.5]{MR1461206} and is naturally related to Ledrappier's example (Example~\ref{ledrappiers_example}). It may be considered to be one of the most fundamental examples given by a single automorphism of a compact abelian zero-dimensional group that is not a full shift.

\begin{example}[An extension of a full~$p$-shift]\label{typical_full_shift_extension}
Let~$p$ be a rational prime and consider the compact abelian group
\[
X=\{x=(x_{i,j})\in\mathbb{F}_p^{\mathbb{Z}^2}:
x_{i+1,j}-x_{i,j+1}-x_{i,j}=0\mbox{ for all }(i,j)\in\mathbb{Z}^2\}.
\]
When~$p=2$, this group is the shift space underlying Ledrappier's example. In that case, a~$\mathbb{Z}^2$-action is generated by the horizontal and vertical shifts, defined by~$(x)_{i,j}\mapsto x_{i+1,j}$ and~$(x)_{i,j}\mapsto x_{i,j+1}$ respectively. Dually,~$\widehat{X}$ can be identified with the ring~$D=\mathbb{F}_p[t^{\pm 1},(t-1)^{\pm 1}]$ and the map dual to the horizontal shift identifies with multiplication by~$t$ on~$D$. Denote the~$\mathbb{Z}$-action generated by horizontal shift by~$\alpha$. Then~(\ref{periodic_point_formula_infinite_places}) gives
\[
\nfix_\alpha(n)=|t^n-1|_\infty|t^n-1|_{(t)}|t^n-1|_{(t-1)}=p^{n-\nu(n)},
\]
where~$\nu(n)=|n|_p^{-1}$. Therefore,~$\zeta_\alpha$ has radius of convergence~$p^{-1}$. Let~$a_n=p^{-\nu(n)}$ and~$F(z)=\sum_{n\geqslant 1}a_nz^n$. Clearly,~$\limsup_{n\rightarrow\infty}a_n^{1/n}=1$ and consequently~$F$ has radius of convergence 1. Furthermore,~$F(pz)=z\zeta_\alpha'(z)/\zeta_\alpha(z)$,
so if~$\zeta_\alpha$ has analytic continutation beyond the circle~$|z|=p^{-1}$, then~$F$ has analytic continutation beyond the unit circle. However, for~$n_k=p^k$, we have
\[
\limsup_{n_k\rightarrow\infty}a_{n_k}^{1/n_k}=p^{-1},
\]
and so~$F$ contains a sequence of partial sums that is uniformly convergent for~$|z|<p$. Hence, the series~$F$ is overconvergent,
so the series may be written as
a sum of a series convergent on~$\vert z\vert<p$
and a lacunary series, and hence the unit circle is a natural boundary for~$F$ (see~\cite[Sec.~6.2]{MR2376066}).
It follows that ~$|z|=p^{-1}$ is a natural boundary for~$\zeta_\alpha$.
\end{example}

The overconvergence phenomenon in Example~\ref{typical_full_shift_extension} is visible more generally, and this leads to the following.

\begin{proof}[Proof of Theorem~\ref{single_automorphism_theorem}]
Following the method of~\cite{MR2441142},
there exist function fields~$\mathbb{K}_1,\dots,\mathbb{K}_r$ of the form~$\mathbb{K}_i=\mathbb{F}_{p(i)}(t)$, where each~$p(i)$ is a rational prime, and sets of places~$S_i\subset\mathcal{P}(\mathbb{K}_i)$,~$1\leqslant i\leqslant r$, such that~(\ref{periodic_point_formula_infinite_places}) gives the number of periodic points for~$\alpha$, and the topological entropy is~$\entropy=\sum_{i=1}^r\log p(i)$. Moreover, each set of places~$S_i$ contains the infinite place of~$\mathbb{K}_i$. Subsequently, if~$S_i'\subset\mathcal{P}(\mathbb{K}_i)$ denotes the set~$S_i$ with the infinite place removed,~$1\leqslant i\leqslant r$, then
$\nfix_\alpha(n)=e^{\entropy n}a_n$, where
\begin{equation}\label{i_am_on_a_lonely_road}
a_n=\prod_{i=1}^{r}\prod_{v\in S_i'}|t^n-1|_v.
\end{equation}
Define~$F(z)=\sum_{n\geqslant 1}a_nz^n$ and note that~$F(e^{\entropy}z)=z\zeta_\alpha'(z)/\zeta_\alpha(z)$.

By hypothesis,~$F$ has radius of convergence~$1$. If~$S_i'=\{(t)\}$ for all~$1\leqslant i\leqslant r$, then the product in~(\ref{i_am_on_a_lonely_road}) is trivial, and it follows immediately that~$\zeta_\alpha(z)=(1-e^{\entropy}z)^{-1}$. If not, then there exists~$1\leqslant j\leqslant r$ such that~$S_j'$ contains a place~$w$ corresponding to a polynomial of degree~$d_w\geqslant 1$, with~$|t|_w=1$. Let~$\ell_w$ denote the multiplicative order of the image of~$t$ in the residue field at~$w$ and write~$p=p(i)$. For~$n_k=\ell_w p^k$, a standard argument using the binomial theorem shows that~$|t^{n_k}-1|_w=p^{-d_w p^k}$. For all~$n\geqslant 1$,~$a_n\leqslant |t^n-1|_w$,  so this gives
\[
\limsup_{n_k\rightarrow\infty}a_{n_k}^{1/n_k}\leqslant p^{-d_w/\ell_w},
\]
and~$F$ therefore contains a sequence of partial sums that is uniformly convergent for~$|z|<p^{d_w/\ell_w}$. As in Example~\ref{typical_full_shift_extension}, it follows that the series~$F$ is overconvergent and has the unit circle as a natural boundary. Therefore, the circle~$|z|=e^{-\entropy}$ is a natural boundary for~$\zeta_\alpha$.
\end{proof}

It is clear from the proof above that Theorem~\ref{single_automorphism_theorem} can be improved upon. In particular, overconvergence may be detected even if the function~$F$ appearing in the proof does not have radius of convergence~1 (equivalently,~$\zeta_\alpha$ would have radius of convergence greater than~$e^{-\entropy}$). However, in light of the discussion in the
introduction, we do not pursue this further here.

\section{The zeta function for~$\mathbb{Z}^d$-actions with zero entropy}\label{examples_section}

In this section, we consider~$\mathbb{Z}^d$-actions
of zero entropy
on compact abelian zero-dimensional groups, where $d\geqslant2$.
These are easily classified using dual modules and the entropy formula of Lind, Schmidt and the second author~\cite{MR1062797}. In particular, the action~$\alpha_M$ belongs to this class if and only if for each~$\mathfrak{p}\in\ass(M)$, the ideal
$\mathfrak{p}\mbox{ is non-principal}$ and~$\mathfrak{p}\cap\mathbb{Z}\neq \{0\}$. The most amenable examples in this class are the entropy rank one actions, where each ~$\mathfrak{p}\in\ass(M)$ also satisfies~$\coht(\mathfrak{p})\leqslant 1$. For these actions, we deduce Theorem~\ref{main_result_natural_boundary_entropy_rank_one} from the estimates obtained in Section~\ref{periodic_points_section}. We also include an example which is not of entropy rank one, to emphasize subtler properties of the zeta function
not present in the entropy rank one case. We begin with the well-known example due to Ledrappier~\cite{MR512106}, who used it to demonstrate a~$\mathbb{Z}^2$-action that is mixing but not mixing of higher orders. The dynamical zeta function for this example was first considered by Lind in~\cite{MR1411232}.

\begin{example}[Ledrappier's example]\label{ledrappiers_example}
Let~$\alpha$ denote the~$\mathbb{Z}^2$-action given by the restriction of the full~$\mathbb{Z}^2$-shift to the compact abelian group
\[
\{x=(x_{i,j})\in\mathbb{F}_2^{\mathbb{Z}^2}:
x_{i+1,j}+x_{i,j+1}+x_{i,j}=0\mbox{ for all }(i,j)\in\mathbb{Z}^2\}.
\]
This corresponds to the dual~$R_2$-module~$R_2/\mathfrak{p}$, where~$\mathfrak{p}=(2, 1+u_1+u_2)$, and
\[
R_2/\mathfrak{p}\cong\mathbb{F}_2[t^{\pm 1},(t+1)^{\pm 1}],
\]
for an indeterminate~$t$, where the isomorphism is given by~$u_1\mapsto t$ and~$u_2\mapsto t+1$. Using a geometric argument, Lind shows that there is a constant~$C>0$ such that~$\nfix_\alpha(\Lambda)\leqslant 2^{C\sqrt{[\Lambda]}}$ for all~$\Lambda\in\mathcal{L}_2$. This means that the upper growth rate of periodic points for this is example is~$0$ and using the P{\'o}lya-Carlson Theorem and integrality of coefficients in the Taylor
expansion (see Proposition~\ref{pc_route_to_natural_boundary}), one may deduce that the unit circle is a natural boundary for~$\zeta_\alpha$.

Note that Lind's estimate agrees with that of Theorem~\ref{minkowski_bound}. In particular, following the methods of Section~\ref{periodic_points_section}, we may calculate a value for the constant~$C$. We have~$S(\mathfrak{p})=\{\infty,(t),(t+1)\}$ and, using a standard application of Minkowski's theorem, it can be shown that for any~$\Lambda\in\mathcal{L}_2$, there exists~$\mathbf{n}\in\Lambda$ such that~$||\mathbf{n}||\leq 2\pi^{-1/2}\sqrt{[\Lambda]}$. Hence,
\[
\nfix_\alpha(\Lambda)
\leqslant
\nfix_\alpha(\mathbf{n})
=
\prod_{v\in S(\mathfrak{p})}|t^{n_1}(t+1)^{n_2}-1|_v
\leqslant 2^{|n_1|+|n_2|}
\leqslant 2^{\sqrt{2}||\mathbf{n}||}
\leqslant 2^{2\sqrt{2/\pi}\sqrt{[\Lambda]}}
\]
Lind's calculations also show that this bound cannot be significantly improved upon. In particular, if~$\Lambda=\langle(2^n-1,0), (0,2^n-1)\rangle$,~$n\geqslant 1$,
then~$\nfix_\alpha(\Lambda)=\frac{1}{2}2^{\sqrt{[\Lambda]}}$.

A phenomenon that seems peculiar to this example and minor variations thereof is that~$\nfix_\alpha(\Lambda)$ is trivial for subgroups of~$\mathbb{Z}^d$ of the following form. Suppose that~$[\Lambda]=2^n$,~$n\geqslant 1$. By~(\ref{hermite_matrix}),~$\Lambda=\langle(2^j,0), (b,2^k)\rangle$, where~$j+k=n$ and~$0\leqslant b\leqslant 2^j-1$. Since
\[
\nfix_\alpha((2^j,0))=
|t^{2^j}-1|_\infty
|t^{2^j}-1|_{(t)}
|t^{2^j}-1|_{(t+1)}=2^{2^j}\cdot 1 \cdot 2^{-2^j}=1,
\]
it follows that,~$\nfix_\alpha(\Lambda)=1$.
Note that Theorem~\ref{main_result_restricted_on_primes} identifies a more homogeneous collection of subgroups on which~$\nfix_\alpha(\Lambda)$ is similarly restricted for this and similar examples of entropy rank one.
\end{example}

More generally, under the hypotheses of Theorem~\ref{main_result_natural_boundary_entropy_rank_one},  Corollary~\ref{minkowski_bound} shows that
\[
\ugr(\alpha)=\limsup_{[\Lambda]\rightarrow\infty}
\frac{1}{[\Lambda]}\log\nfix_\alpha(\Lambda)\leqslant
\lim_{[\Lambda]\rightarrow\infty}
\frac{1}{[\Lambda]}\log C^{\sqrt{d}[\Lambda]^{1/d}}
=0.
\]
So,~$\zeta_\alpha$ has radius of convergence~$1$. Subsequently, Theorem~\ref{main_result_natural_boundary_entropy_rank_one} may be deduced as a consequence of Corollary~\ref{minkowski_bound} and the following result.

\begin{proposition}\label{pc_route_to_natural_boundary}
Let~$\alpha$ be a~$\mathbb{Z}^d$-action
by automorphisms of a compact abelian group. If~$\zeta_\alpha$ has radius of convergence 1 and~$d\geqslant 2$, then~$\zeta_\alpha$ admits the unit circle as a natural boundary.
\end{proposition}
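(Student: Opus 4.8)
The plan is to invoke the P{\'o}lya--Carlson theorem and then eliminate the rational alternative, using the fact that for~$d\geqslant 2$ the number of index-$n$ subgroups of~$\mathbb{Z}^d$ is unbounded.

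The first ingredient is that~$\zeta_\alpha$ has non-negative integer coefficients; this is part of Lind's analysis in~\cite{MR1411232}, and it can also be seen directly from the orbit decomposition. Write~$O_k$ for the number of~$\alpha$-orbits of cardinality~$k$; since we are assuming~$\zeta_\alpha$ is a genuine power series, each~$\nfix_\alpha(\Lambda)$, and hence each~$O_k$, is finite. Collecting the contribution of each finite orbit to~$\nfix_\alpha$ (a~$\mathbb{Z}^d$-orbit of size~$k$ having a common stabiliser of index~$k$) gives the identity
\[
\zeta_\alpha(z)=\prod_{k\geqslant 1}Z_d\bigl(z^k\bigr)^{O_k},
\qquad
Z_d(w)=\exp\left(\sum_{m\geqslant 1}\frac{a_d(m)}{m}\,w^m\right),
\]
where~$a_d(m)$ is the number of index-$m$ subgroups of~$\mathbb{Z}^d$ and~$Z_d$ is precisely the zeta function of the trivial~$\mathbb{Z}^d$-action on a point, which has non-negative integer coefficients. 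Since~$Z_d(0)=1$ and each~$O_k\geqslant 0$, the product again has non-negative integer coefficients.

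As~$\zeta_\alpha$ has radius of convergence~$1$ by hypothesis, the P{\'o}lya--Carlson theorem now gives that either~$\zeta_\alpha$ is rational or the unit circle is a natural boundary for~$\zeta_\alpha$; it remains to rule out rationality, and this is where~$d\geqslant 2$ enters. Suppose~$\zeta_\alpha$ is rational. Because the series~$\log\zeta_\alpha(z)=\sum_{\Lambda\in\mathcal{L}_d}\frac{\nfix_\alpha(\Lambda)}{[\Lambda]}z^{[\Lambda]}$ has radius of convergence~$e^{-\ugr(\alpha)}=1$, the function~$\zeta_\alpha=\exp(\log\zeta_\alpha)$ is holomorphic and zero-free on~$|z|<1$; hence the rational function
\[
z\,\frac{\zeta_\alpha'(z)}{\zeta_\alpha(z)}=\sum_{n\geqslant 1}c_n z^n,
\qquad
c_n=\sum_{[\Lambda]=n}\nfix_\alpha(\Lambda),
\]
has radius of convergence~$1$, all of its poles have modulus~$\geqslant 1$, and --- being a logarithmic derivative --- all of them are simple. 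In the partial-fraction expansion, the poles of modulus~$>1$ contribute geometrically decaying terms to~$c_n$ and each simple pole of modulus~$1$ contributes a bounded term, so~$(c_n)_{n\geqslant 1}$ is bounded. But~$\nfix_\alpha(\Lambda)\geqslant 1$ for every~$\Lambda$ (the identity of~$X$ is fixed), whence~$c_n\geqslant a_d(n)$, and taking~$n$ to be a prime~$p$ gives~$c_p\geqslant a_d(p)=1+p+\dots+p^{d-1}$, which is unbounded because~$d\geqslant 2$. This contradiction forces~$\zeta_\alpha$ to be non-rational, and P{\'o}lya--Carlson then yields the natural boundary.

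The step needing the most care is the integrality claim in the stated generality: establishing the orbit-product identity carefully and invoking the non-negativity of the coefficients of~$Z_d$, together with checking that the hypothesis that~$\zeta_\alpha$ is a power series of radius of convergence~$1$ already forces all periodic point counts, hence all~$O_k$, to be finite. Everything after that is a short complex-analytic argument, with~$d\geqslant 2$ used only to make the subgroup counts~$a_d(n)$ unbounded.
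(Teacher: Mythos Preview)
Your proof is correct and follows the same strategy the paper points to: establish integer coefficients for~$\zeta_\alpha$ via the orbit product~$\zeta_\alpha(z)=\prod_{k\geqslant 1} Z_d(z^k)^{O_k}$, apply the P{\'o}lya--Carlson theorem, and then eliminate the rational branch using the unboundedness of~$a_d(n)$ for~$d\geqslant 2$. The paper's proof consists only of pointers to Lind's calculations and to~\cite[Lem.~3.3]{MR3336617}, so your write-up is actually more explicit; your device for ruling out rationality---noting that a rational~$\zeta_\alpha$ with no zeros or poles in~$|z|<1$ would force the coefficients~$c_n$ of~$z\zeta_\alpha'/\zeta_\alpha$ to remain bounded, while in fact~$c_n\geqslant a_d(n)\to\infty$---is a clean self-contained replacement for what the paper outsources to those references.
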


\begin{proof}
This is based on Lind's calculations in~\cite[Ex.~3.4]{MR1411232} and is proved using the P{\'o}lya-Carlson Theorem. A crucial step in the proof is to demonstrate that the Taylor series of~$\zeta_\alpha$ has integer coefficients. See~\cite[Lem.~3.3]{MR3336617} and~\cite{MR1411232} for details.
\end{proof}

In the setting above, a natural boundary for~$\zeta_\alpha$ arises via dense singularities on the circle of convergence~$|z|=1$. In the following example,~$\ugr(\alpha)>\mathsf{h}(\alpha)=0$, so the radius of convergence is~$e^{-\ugr(\alpha)}<1$, yet~$\zeta_\alpha$ can be analytically continued beyond~$|z|=e^{-\ugr(\alpha)}$, ultimately encountering a natural boundary at the unit circle. Lind gives an example of an algebraic~$\mathbb{Z}^2$-action of this type~\cite[Ex.~3.3]{MR1411232} which is non-mixing and of entropy rank one. Since  Theorem~\ref{main_result_natural_boundary_entropy_rank_one} precludes the possibility of such examples for mixing entropy rank one~$\mathbb{Z}^d$-actions with~$d\geqslant 2$, to potentially witness this phenomenon for mixing actions, one must either consider~$\mathbb{Z}^2$-actions with positive entropy or consider actions of~$\mathbb{Z}^3$. In light of Theorem~\ref{main_result_natural_boundary_z2}, this leads us to consider the latter case.

\begin{example}[A shift extension of Ledrappier's example]\label{shift extension_of_ledrappier}
By adding an extra dimension to Ledrappier's example, we obtain the compact abelian group
\[
\{x=(x_{i,j,k})\in\mathbb{F}_2^{\mathbb{Z}^3}:
x_{i,j,k}+x_{i+1,j,k}+x_{i,j+1,k}=0\mbox{ for all }(i,j,k)\in\mathbb{Z}^3\},
\]
and we consider the restriction of the full~$\mathbb{Z}^3$-shift to this group. Denote this action by~$\alpha$ and note that the corresponding dual~$R_3$-module is~$D=R_3/\mathfrak{p}$, where~$\mathfrak{p}=(2,1+u_1+u_2)$. Let~$\beta$ denote the~$\mathbb{Z}^2$-action given by the~$R_2$-module~$\mathbb{F}_2[\overline{u}_1^{\pm 1}, \overline{u}_1^{\pm 1}]\subset D$, which corresponds to Ledrappier's example.
If~$\Lambda\in\mathcal{L}_3$ is given by a~$3\times3$ matrix of the form~(\ref{hermite_matrix}), then a straightforward algebraic calculation shows that
\begin{equation}\label{i_dreamed_of_incredible_heights}
\nfix_\alpha(\Lambda)=\nfix_\beta(\Lambda')^{a_3}
\end{equation}
where~$\Lambda'\in\mathcal{L}_2$ is given by the top left~$2\times2$ matrix in~(\ref{hermite_matrix}), with the parameters~$a_1, b_{12}, a_2$ inherited from~$\Lambda$. Note that~$[\mathbb{Z}^3:\Lambda]=[\mathbb{Z}^2:\Lambda']a_3$. Hence,
\begin{eqnarray*}
\ugr(\alpha)
&=&
\limsup_{\Lambda\in\mathcal{L}_3:[\Lambda]\rightarrow\infty}
\frac{1}{[\Lambda]}\log\nfix_\alpha(\Lambda)\\
&=&
\limsup_{\Lambda\in\mathcal{L}_2, n\geqslant 1:[\Lambda]n\rightarrow\infty}
\frac{1}{[\Lambda] n}\log\nfix_\beta(\Lambda)^n\\
&=&
\sup_{\Lambda\in\mathcal{L}_2}
\log\nfix_\beta(\Lambda)^{1/[\Lambda]}.
\end{eqnarray*}

For ease of notation, write~$\Lambda\in\mathcal{L}_2$ in the form
$\Lambda =\langle (a,0), (b,c)\rangle$, where~$a,c\geqslant 1$ satisfy~$ac=[\Lambda]$ and~$0\leqslant b\leqslant a-1$. Let~$T(m)$ denote the set of places corresponding to irreducible polynomials of~$\mathbb{F}_2[t]$ of degree at most~$m$, excluding~$t$ and~$t+1$. By Lemma~\ref{I_caught_sleep},
\begin{equation}\label{the_roses_and_their_prayers}
\nfix_\beta(\Lambda) = \prod_{v\in T(\max\{a,b+c\})} \min\{|t^a-1|^{-1}_v,|t^b(t+1)^c-1|^{-1}_v\}
\end{equation}
Using~(\ref{the_roses_and_their_prayers}), it can easily be verified that~$\nfix_\alpha(\Lambda)\in\{1,4\}$ for all~$\Lambda$ with~$[\Lambda]\leq 6$. For example, when~$\Lambda=\langle (3,0), (1,1)\rangle$,~$\nfix_\beta(\Lambda)=4$, as
\[
|t^3-1|^{-1}_{t^2+t+1}=|t(t+1)-1|^{-1}_{t^2+t+1}=4,
\]
and this is the only place that gives a non-trivial contribution in~(\ref{the_roses_and_their_prayers}). Furthermore, this choice of~$\Lambda$ also gives largest value of~$\nfix_\alpha(\Lambda)^{1/[\Lambda]}=2^{2/3}$ for~$[\Lambda]\leq 6$. When~$[\Lambda]\geqslant 7$,
\[
\nfix_\beta(\Lambda)^{1/[\Lambda]}
\leqslant
2^{2\sqrt{2/(7\pi)}}<2^{2/3}
\]
by the estimate given in Example~\ref{ledrappiers_example}. Thus,~$\ugr(\alpha)=\frac{2}{3}\log 2$ and~$\zeta_\alpha$ has radius of convergence~$2^{-2/3}$.

Given any~$n\geqslant 1$, under the map~$\Lambda\mapsto\Lambda'$, there are precisely~$n^2$ subgroups~$\Lambda\in\mathcal{L}_3$ such that~$[\mathbb{Z}^3:\Lambda]=[\mathbb{Z}^2:\Lambda']n$. Hence, using~(\ref{i_dreamed_of_incredible_heights}),
\[
\zeta_\alpha(z)
=
\exp\left(
\sum_{\Lambda\in\mathcal{L}_2, n\geq 1}
\frac{n\nfix_\beta(\Lambda)^{n}}{[\Lambda]}z^{[\Lambda]n}
\right).
\]
Writing~$\mathcal{J}(N)=\{\Lambda\in\mathcal{L}_2:[\Lambda]\leqslant N-1\}$ and~$\mathcal{K}(N)=\{\Lambda\in\mathcal{L}_2:[\Lambda]\geqslant N\}$, we may split the series inside the brackets into the sum of two series~$F_{\mathcal{J}(N)}(z)$ and~$F_{\mathcal{K}(N)}(z)$ corresponding to this partition of~$\mathcal{L}_2$. Since~$\mathcal{J}(N)$ is finite, and since
\begin{eqnarray*}
F_{\mathcal{J}(N)}(z)
&=&
\sum_{\Lambda\in\mathcal{J}(N), n\geq 1}
\frac{n\nfix_\beta(\Lambda)^{n}}{[\Lambda]}z^{[\Lambda]n}\\
&=&
\sum_{\Lambda\in\mathcal{J}(N)}
\frac{1}{[\Lambda]}
\sum_{n\geq 1}
n(\nfix_\beta(\Lambda)z^{[\Lambda]})^n\\
&=&
\sum_{\Lambda\in\mathcal{J}(N)}
\frac{\nfix_\beta(\Lambda)z^{[\Lambda]}}
{[\Lambda](1-\nfix_\beta(\Lambda)z^{[\Lambda]})^2},
\end{eqnarray*}
we see that~$F_{\mathcal{J}(N)}(z)$ has an analytic continuation to a rational function with~$[\Lambda]$ evenly distributed poles on the circle~$|z|=\nfix_\beta(\Lambda)^{-1/[\Lambda]}$,~$\Lambda\in\mathcal{J}(N)$. On the other hand,
\[
\limsup_{[\Lambda]\geqslant N, n\geq 1:[\Lambda]n\rightarrow\infty}
\left(
\frac{n\nfix_\beta(\Lambda)^{n}}{[\Lambda]}
\right)^{\frac{1}{[\Lambda]n}}
\leqslant
2^{2\sqrt{2/(\pi N)}},
\]
using the estimate obtained for~$\nfix_{\beta}(\Lambda)$. So, the series~$F_{\mathcal{K}(N)}(z)$ has radius of convergence at least~$2^{-2\sqrt{2/(\pi N)}}$. Since~$N$ may be chosen to be arbitrarily large,~$\zeta_\alpha$ has an analytic continuation to the interior of the unit disk. Upon defining~$\Lambda(n)=\langle(2^{n}-1,0),(0,2^n-1)\rangle$, by Example~\ref{ledrappiers_example}, we have~$\nfix_\beta(\Lambda(n))=\frac{1}{2}2^{2^n-1}$, as~$[\Lambda(n)]=(2^n-1)^2$. This means that if~$N>[\Lambda(n)]$, then~$F_{\mathcal{J}(N)}(z)$ has at least~$[\Lambda(n)]$ poles evenly distributed on the circle~$|z|=\nfix_\beta(\Lambda(n))^{-1/[\Lambda(n)]}$. Thus, the poles of~$F_{\mathcal{J}(N)}(z)$ cluster towards a dense set on the unit circle as~$N\rightarrow\infty$.
\end{example}

\section{The zeta function for~$\mathbb{Z}^2$-actions with positive entropy}\label{positive_entropy_section}

In this section we consider a positive entropy~$\mathbb{Z}^2$-action~$\alpha$ by automorphisms of a compact abelian zero-dimensional group~$X$. Let~$\mathbb{P}(\alpha)$ be given by~(\ref{rational_primes_for_the_action}) and let~$M$ denote the dual module. For each~$p\in\mathbb{P}(\alpha)$, the local ring~$M_{(p)}$ may be considered naturally as vector space over the field of fractions of~$R_2/(p)$, which we denote by~$\mathbb{K}(p)$, and~$m(p)=\dim_{\mathbb{K}(p)}M_{(p)}$ is equal to the multiplicity of~$(p)$ in any prime filtration~(\ref{module_chain}) of~$M$. Furthermore, the entropy formula for~$\mathbb{Z}^d$-actions shows that
\begin{equation}\label{entropy_formula}
\entropy(\alpha)=\log
\prod_{p\in\mathbb{P}(\alpha)}p^{m(p)}.
\end{equation}

\begin{theorem}\label{z2_pp_count_theorem}
Let~$\alpha$ be a mixing Noetherian~$\mathbb{Z}^2$-action by automorphisms of a compact abelian zero-dimensional group~$X$. Then there is a mixing Noetherian entropy rank one~$\mathbb{Z}^2$-action~$\beta$ by automorphisms of a compact abelian zero-dimensional group and constants~$A, B>0$ such that for any~$\Lambda\in\mathcal{L}_2$
\[
Ae^{\entropy[L]}
\leqslant
\nfix_\alpha(\Lambda)
\leqslant
Be^{\entropy[L]}
\nfix_\beta(\Lambda),
\]
where~$\entropy=\entropy(\alpha)$ is given by~(\ref{entropy_formula}).
\end{theorem}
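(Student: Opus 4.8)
The plan is to separate the positive-entropy part of $M$, which will contribute the factor $e^{\entropy(\alpha)[\Lambda]}$ up to a bounded constant, from an entropy rank one remainder. First I would reduce to a $p$-primary module. Since $X$ is zero-dimensional, $M$ is a torsion $\mathbb Z$-module, so $M=\bigoplus_p M_p$ is a finite direct sum over the rational primes of its $p$-primary parts; this gives $\nfix_\alpha(\Lambda)=\prod_p\nfix_{\alpha_{M_p}}(\Lambda)$, and $\entropy(\alpha)=\sum_p\entropy(\alpha_{M_p})$ by~(\ref{entropy_formula}), while each $M_p$ is Noetherian and $\alpha_{M_p}$ mixing (as $M_p\subseteq M$, so $\ass(M_p)\subseteq\ass(M)$). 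Since the class of mixing Noetherian entropy rank one actions is closed under direct sums, it suffices to fix a $p$-primary $M$; write $\bar R=R_2/(p)=\mathbb F_p[u_1^{\pm1},u_2^{\pm1}]$. I will use that $\nfix_{\alpha_V}(\Lambda)=|V/\mathfrak b_\Lambda V|$ is finite for every finitely generated $\bar R$-module $V$ and every $\Lambda\in\mathcal L_2$ (since $V\otimes_{R_2}R_2/\mathfrak b_\Lambda$ is finitely generated over $\mathbb Z$ and killed by $p$), that $\entropy(\alpha_V)=\operatorname{rank}_{\bar R}(V)\log p$ for such $V$, and that the multiplicity $m(p)$ of $(p)$ in a prime filtration of $M$ equals $\operatorname{length}_{R_{2,(p)}}(M_{(p)})$.

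The central point is an estimate for a finitely generated \emph{torsion-free} $\bar R$-module $V$ of rank $m$. Fix an embedding $V\hookrightarrow\bar R^m$ with torsion cokernel $W$, and a basis $\mathbf a,\mathbf b$ of $\Lambda$; then $\mathfrak b_\Lambda=(u^{\mathbf a}-1,u^{\mathbf b}-1)$ is generated by a regular sequence on the two-dimensional regular ring $\bar R$, so the Koszul complex on $u^{\mathbf a}-1,u^{\mathbf b}-1$ resolves $\bar R/\mathfrak b_\Lambda$, and for $\bar R$-modules $\operatorname{Tor}^{R_2}_\bullet(-,R_2/\mathfrak b_\Lambda)$ can be computed from it over $\bar R$. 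Since $W$ is supported in dimension at most $1$, the Koszul--Euler characteristic of $W$ vanishes (reduce via a prime filtration of $W$ to the case of a one-dimensional domain or a field, or invoke Serre's vanishing theorem). Comparing the Koszul homologies of $V$, $\bar R^m$ and $W$ along $0\to V\to\bar R^m\to W\to0$ then yields $\operatorname{Tor}^{R_2}_i(V,R_2/\mathfrak b_\Lambda)=0$ for $i\ge2$, an isomorphism $\operatorname{Tor}^{R_2}_1(V,R_2/\mathfrak b_\Lambda)\cong W^\Lambda$ (the $\Lambda$-fixed submodule of $W$), and the exact formula $\nfix_{\alpha_V}(\Lambda)=p^{m[\Lambda]}\,|W^\Lambda|$. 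As $W$ is finitely generated, its submodule $W_{\mathrm{fin}}$ of elements with finite $\mathbb Z^2$-orbit has finite length, hence is finite, and $W^\Lambda\subseteq W_{\mathrm{fin}}$ for all $\Lambda$; writing $B_V:=|W_{\mathrm{fin}}|$ this gives both $p^{m[\Lambda]}\le\nfix_{\alpha_V}(\Lambda)\le B_V\,e^{\entropy(\alpha_V)[\Lambda]}$ and the uniform bound $|\operatorname{Tor}^{R_2}_1(V,R_2/\mathfrak b_\Lambda)|\le B_V$.

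For the upper bound I would run the $p$-power-torsion filtration $0=M[1]\subseteq M[p]\subseteq\cdots\subseteq M[p^k]=M$. Multiplication by $p^i$ identifies $N_i:=M[p^{i+1}]/M[p^i]$ with a submodule of $M[p]\subseteq M$, so each $N_i$ is a finitely generated $\bar R$-module with $\ass(N_i)\subseteq\ass(M)$ (hence $\alpha_{N_i}$ mixing), and $\sum_i\operatorname{rank}_{\bar R}(N_i)=m(p)$ (localizing the filtration at $(p)$ computes $\operatorname{length}_{R_{2,(p)}}(M_{(p)})$). Applying Lemma~\ref{lsw_basic_pp_estimate} repeatedly along the filtration and discarding the correction factors (integers $\ge1$) gives $\nfix_{\alpha_M}(\Lambda)\le\prod_i\nfix_{\alpha_{N_i}}(\Lambda)$. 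Writing $0\to T_i\to N_i\to F_i\to0$ with $T_i$ the $\bar R$-torsion submodule and $F_i$ torsion-free of rank $m_i$: the associated primes of $T_i$ lie in $\ass(M)$ and have coheight $1$, so $\alpha_{T_i}$ is mixing, Noetherian and entropy rank one, and Lemma~\ref{lsw_basic_pp_estimate} together with the previous paragraph gives $\nfix_{\alpha_{N_i}}(\Lambda)\le\nfix_{\alpha_{F_i}}(\Lambda)\,\nfix_{\alpha_{T_i}}(\Lambda)\le B_{F_i}\,p^{m_i[\Lambda]}\,\nfix_{\alpha_{T_i}}(\Lambda)$. Multiplying over $i$ and over $p$, and setting $\beta=\alpha_{\bigoplus_p\bigoplus_iT_i}$ (a mixing Noetherian entropy rank one $\mathbb Z^2$-action) and $B=\prod_p\prod_iB_{F_i}$, gives $\nfix_\alpha(\Lambda)\le B\,e^{\entropy(\alpha)[\Lambda]}\,\nfix_\beta(\Lambda)$.

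For the lower bound, if $\entropy(\alpha)=0$ there is nothing to prove, so in the $p$-primary case assume $(p)\in\ass(M)$, let $Q$ be the $(p)$-primary component of $(0)$ in $M$, and set $M^*=M/Q$, so $\ass(M^*)=\{(p)\}$, $M^*$ is $p$-primary, $m_{M^*}(p)=m(p)$ (localize at $(p)$) and hence $\entropy(\alpha_{M^*})=\entropy(\alpha)$, while $\nfix_{\alpha_M}(\Lambda)\ge\nfix_{\alpha_{M^*}}(\Lambda)$ as $M^*$ is a quotient of $M$. Because $\ass(M^*)=\{(p)\}$, every quotient $N^*_i$ in the $p$-power-torsion filtration of $M^*$ is torsion-free over $\bar R$, of rank $m^*_i$ with $\sum_im^*_i=m(p)$; running this filtration, the correction factor in Lemma~\ref{lsw_basic_pp_estimate} applied to $0\to M^*[p^i]\to M^*[p^{i+1}]\to N^*_i\to0$ is a quotient of $\operatorname{Tor}^{R_2}_1(N^*_i,R_2/\mathfrak b_\Lambda)$, which is bounded independently of $\Lambda$ by the second paragraph, so combining with $\nfix_{\alpha_{N^*_i}}(\Lambda)\ge p^{m^*_i[\Lambda]}$ and inducting on $i$ gives $\nfix_{\alpha_{M^*}}(\Lambda)\ge A\,p^{m(p)[\Lambda]}=A\,e^{\entropy(\alpha)[\Lambda]}$ for some $A>0$, which finishes the proof. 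The hard part is that the correction terms in Lemma~\ref{lsw_basic_pp_estimate} must genuinely be controlled---the crude bound from a full prime filtration overcounts the entropy rank one factors exponentially---so the decisive inputs are the exact identity $\nfix_{\alpha_V}(\Lambda)=p^{\operatorname{rank}(V)[\Lambda]}|W^\Lambda|$ with $|W^\Lambda|$ uniformly bounded, and, for the lower bound, the observation that on a genuinely entropy rank one module the relevant $\operatorname{Tor}_1$ is only subexponentially bounded in $[\Lambda]$ rather than bounded, which is precisely why one must first pass to the $(p)$-primary quotient $M^*$, on which the filtration quotients become torsion-free.
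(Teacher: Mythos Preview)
Your argument is correct and takes a genuinely different route from the paper. The paper works with a single prime filtration of~$M$ and develops three auxiliary lemmas (Lemmas~\ref{pp_count_induction_step_lemma}, \ref{error_factor_control_lemma}, \ref{non_mixing_prime_lemma}) to control the correction factor~$|(\mathfrak b_\Lambda\cap\mathfrak p)/\mathfrak a|$ case by case: it vanishes for principal~$(p)$, is dominated by~$|D/\mathfrak b_\Lambda D|$ for coheight-one mixing primes (via passage to the integral closure and Dedekind arithmetic), and is handled for non-mixing primes by the trick of multiplying~$M$ by~$u^{\mathbf n}-1$. The entropy rank one action~$\beta$ is then assembled from the quotients~$R_2/\mathfrak p_i$ of type~(ii).

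Your approach replaces all of this with Koszul homology. The key identity~$\nfix_{\alpha_V}(\Lambda)=p^{m[\Lambda]}\lvert W^\Lambda\rvert$ for torsion-free~$\bar R$-modules, together with the uniform bound~$\lvert W^\Lambda\rvert\le\lvert W_{\mathrm{fin}}\rvert$, packages the paper's Lemma~\ref{error_factor_control_lemma}(\ref{principal_prime_error_factor}) and the main part of Lemma~\ref{error_factor_control_lemma}(\ref{coheight_one_error_factor}) into a single homological statement, and the vanishing of the Koszul--Euler characteristic of a torsion~$\bar R$-module replaces the explicit Dedekind computations. Using the~$p$-power-torsion filtration rather than a prime filtration means your subquotients are~$\bar R$-modules from the start, which is what makes the Koszul machinery over~$\bar R$ available. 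For the lower bound you pass to the~$(p)$-primary quotient~$M^*$ so that the filtration quotients become torsion-free; this plays the role of the paper's Lemma~\ref{non_mixing_prime_lemma}, which instead exploits mixing directly via the injectivity of multiplication by~$u^{\mathbf n}-1$. Your~$\beta$ (built from the torsion parts~$T_i$) is generally not the same module as the paper's, but the theorem only asserts existence. Overall your argument is more homological and conceptually uniform, while the paper's is more elementary and hands-on; both require roughly the same amount of work.
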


The proof proceeds by an induction argument and a series of lemmas that develop the product estimate given by Lemma~\ref{lsw_basic_pp_estimate}. A related problem is dealt with in~\cite[Sec.~7]{MR1062797}, but we require more refined estimates than the ones obtained there.

Recall that if~$\Lambda\in\mathcal{L}_2$, then~(\ref{hermite_matrix}) shows that
the ideal~$\mathfrak{b}_\Lambda$ defined by~(\ref{pp_ideal_definition}) is of the form
\begin{equation}\label{pp_ideal_z2_form}
\mathfrak{b}_\Lambda=(u_1^a-1,u_1^b u^c-1)
\end{equation}
where~$a, c\geqslant 1$,~$0\leqslant b\leqslant a-1$ and~$[\Lambda]=ac$. It is also helpful to note that if~$M$ is a Noetherian~$R_2$-module such that each~$\mathfrak{p}\in\ass(M)$ contains a rational prime, then~\cite[Th.~6.5(3)]{MR1345152} shows that~$M/\mathfrak{b}_\Lambda M$ is finite.

\begin{lemma}\label{pp_count_induction_step_lemma}
Let~$L\subset M$ be Noetherian~$R_2$-modules such that~$M/L\cong R_2/\mathfrak{p}$ for a prime ideal~$\mathfrak{p}\subset R_2$, and assume that~$M/\mathfrak{b}_\Lambda M$ is finite. Then there is an ideal~$\mathfrak{a}\subset R_2$ satisfying ~$\mathfrak{b}_\Lambda\mathfrak{p}\subset\mathfrak{a}\subset\mathfrak{b}_\Lambda\cap\mathfrak{p}$ such that
\[
|M/\mathfrak{b}_\Lambda M|=\left|\frac{R_2/\mathfrak{p}}{\mathfrak{b}_\Lambda(R_2/\mathfrak{p})}\right|\frac{|L/\mathfrak{b}_\Lambda L|}{|(\mathfrak{b}_\Lambda\cap\mathfrak{p})/\mathfrak{a}|}
\]
\end{lemma}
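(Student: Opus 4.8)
The plan is to derive the identity directly from the basic factorization of Lemma~\ref{lsw_basic_pp_estimate}, applied to the ideal $\mathfrak{b}=\mathfrak{b}_\Lambda$ and the inclusion $L\subset M$, and then to re-express the ``correction factor'' $|(\mathfrak{b}_\Lambda M\cap L)/\mathfrak{b}_\Lambda L|$ as $|(\mathfrak{b}_\Lambda\cap\mathfrak{p})/\mathfrak{a}|$ for a suitable ideal $\mathfrak{a}$ of $R_2$. Since $M/L\cong R_2/\mathfrak{p}$ and $M/\mathfrak{b}_\Lambda M$ is finite, Lemma~\ref{lsw_basic_pp_estimate} gives
\[
|M/\mathfrak{b}_\Lambda M|=\left|\frac{R_2/\mathfrak{p}}{\mathfrak{b}_\Lambda(R_2/\mathfrak{p})}\right|\frac{|L/\mathfrak{b}_\Lambda L|}{|(\mathfrak{b}_\Lambda M\cap L)/\mathfrak{b}_\Lambda L|},
\]
where each term is finite: $L$ is Noetherian with $\ass(L)\subseteq\ass(M)$, so every prime in $\ass(L)$ contains a rational prime and $L/\mathfrak{b}_\Lambda L$ is finite by the remark preceding this lemma. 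Thus the statement reduces to producing an ideal $\mathfrak{a}$ with $\mathfrak{b}_\Lambda\mathfrak{p}\subseteq\mathfrak{a}\subseteq\mathfrak{b}_\Lambda\cap\mathfrak{p}$ together with a group isomorphism $(\mathfrak{b}_\Lambda M\cap L)/\mathfrak{b}_\Lambda L\cong(\mathfrak{b}_\Lambda\cap\mathfrak{p})/\mathfrak{a}$.

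To construct $\mathfrak{a}$, I would fix $x\in M$ whose image generates the cyclic module $M/L\cong R_2/\mathfrak{p}$, so that $M=L+R_2x$ and $\ann_{R_2}(x+L)=\mathfrak{p}$, i.e.\ $rx\in L\iff r\in\mathfrak{p}$; in particular $\mathfrak{p}x\subseteq L$. Since $\mathfrak{b}_\Lambda M=\mathfrak{b}_\Lambda L+\mathfrak{b}_\Lambda x$ (writing $\mathfrak{b}_\Lambda x=\{bx:b\in\mathfrak{b}_\Lambda\}$), a short computation identifies the intersection with $L$: an element $\ell+bx$ of $\mathfrak{b}_\Lambda M$ (with $\ell\in\mathfrak{b}_\Lambda L$, $b\in\mathfrak{b}_\Lambda$) lies in $L$ exactly when $bx\in L$, i.e.\ when $b\in\mathfrak{p}$, so that
\[
\mathfrak{b}_\Lambda M\cap L=\mathfrak{b}_\Lambda L+(\mathfrak{b}_\Lambda\cap\mathfrak{p})x.
\]
(The decomposition $\ell+bx$ is not unique, but both inclusions are immediate once one also uses $(\mathfrak{b}_\Lambda\cap\mathfrak{p})x\subseteq\mathfrak{p}x\subseteq L$.) Hence the $R_2$-linear map $\varphi\colon\mathfrak{b}_\Lambda\cap\mathfrak{p}\to(\mathfrak{b}_\Lambda M\cap L)/\mathfrak{b}_\Lambda L$, $b\mapsto bx+\mathfrak{b}_\Lambda L$, is surjective, and I set $\mathfrak{a}=\ker\varphi=\{\,b\in\mathfrak{b}_\Lambda\cap\mathfrak{p}:bx\in\mathfrak{b}_\Lambda L\,\}$, which is an ideal as the kernel of an $R_2$-linear map out of the ideal $\mathfrak{b}_\Lambda\cap\mathfrak{p}$.

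Finally I would check the two-sided containment. The inclusion $\mathfrak{a}\subseteq\mathfrak{b}_\Lambda\cap\mathfrak{p}$ holds by construction; for $\mathfrak{b}_\Lambda\mathfrak{p}\subseteq\mathfrak{a}$, write a typical element of $\mathfrak{b}_\Lambda\mathfrak{p}$ as $b=\sum_i c_ip_i$ with $c_i\in\mathfrak{b}_\Lambda$ and $p_i\in\mathfrak{p}$; then $b\in\mathfrak{b}_\Lambda\cap\mathfrak{p}$ and $bx=\sum_i c_i(p_ix)\in\mathfrak{b}_\Lambda L$ because each $p_ix\in L$, so $b\in\mathfrak{a}$. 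The first isomorphism theorem for $\varphi$ then yields $(\mathfrak{b}_\Lambda M\cap L)/\mathfrak{b}_\Lambda L\cong(\mathfrak{b}_\Lambda\cap\mathfrak{p})/\mathfrak{a}$, and substituting this into the displayed formula gives the claim. The argument is essentially elementary module theory, so I do not anticipate a genuine obstacle; the only points requiring a little care are the bookkeeping of finiteness of the quotients involved (handled above via the preceding remark) and the fact that $\mathfrak{a}$ depends on the choice of lift $x$, which is harmless since the statement only asserts existence.
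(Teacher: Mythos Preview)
Your proposal is correct and follows essentially the same approach as the paper: apply Lemma~\ref{lsw_basic_pp_estimate}, pick a lift~$x$ of a generator of~$M/L\cong R_2/\mathfrak{p}$, identify~$\mathfrak{b}_\Lambda M\cap L=\mathfrak{b}_\Lambda L+(\mathfrak{b}_\Lambda\cap\mathfrak{p})x$, and take~$\mathfrak{a}$ to be the kernel of the resulting surjection~$\mathfrak{b}_\Lambda\cap\mathfrak{p}\twoheadrightarrow(\mathfrak{b}_\Lambda M\cap L)/\mathfrak{b}_\Lambda L$. The only cosmetic difference is that the paper writes~$\mathfrak{b}_\Lambda=(b_1,b_2)$ explicitly when expanding an element of~$\mathfrak{b}_\Lambda M$, whereas you argue directly from~$\mathfrak{b}_\Lambda M=\mathfrak{b}_\Lambda L+\mathfrak{b}_\Lambda x$; your version is marginally cleaner in that it never uses the two-generator form and hence works verbatim for an arbitrary ideal in place of~$\mathfrak{b}_\Lambda$.
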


\begin{proof}
Firstly, using~(\ref{pp_ideal_z2_form}), we may assume~$\mathfrak{b}=\mathfrak{b}_\Lambda$ is of the form~$\mathfrak{b}=(b_1, b_2)$.
As in Lemma~\ref{lsw_basic_pp_estimate},
\[
|M/\mathfrak{b}M|
=
 \left|\frac{M/L}{\mathfrak{b}(M/L)}\right|\left|\frac{L/\mathfrak{b}L}{(\mathfrak{b}M\cap L)/\mathfrak{b}L}\right|\\
\]
Hence, we need to show~$(\mathfrak{b}M\cap L)/\mathfrak{b}L\cong (\mathfrak{b}\cap\mathfrak{p})/\mathfrak{a}$, where the ideal~$\mathfrak{a}\subset R_2$ has the desired property. The isomorphism~$M/L\cong R_2/\mathfrak{p}$ implies that there exists~$x\in M$ such that~$M=L+R_2x$, and~$fx\in L$ if and only if~$f\in\mathfrak{p}$. Notice that~$(\mathfrak{b}\cap\mathfrak{p})x + \mathfrak{b}L\subset\mathfrak{b}M\cap L$. On the other hand, if~$y\in \mathfrak{b}M\cap L$, then there exist~$f_1,f_2\in R_2$ and~$l_1,l_2\in L$ such that
\[
y=b_1(f_1 x + l_1)+b_2(f_2 x + l_2)\Rightarrow (b_1 f_1+b_2 f_2)x\in L.
\]
So,~$b_1 f_1 + b_2 f_2\in \mathfrak{p}$, and hence~$y\in (\mathfrak{b}\cap\mathfrak{p})x + \mathfrak{b}L$. Therefore,
\[
\mathfrak{b}M\cap L=(\mathfrak{b}\cap\mathfrak{p})x + \mathfrak{b}L,
\]
and
\[
\frac{\mathfrak{b}M\cap L}{\mathfrak{b}L}
=F,
\frac{(\mathfrak{b}\cap\mathfrak{p})x + \mathfrak{b}L}{\mathfrak{b}L}
\cong
\frac{(\mathfrak{b}\cap\mathfrak{p})x}{\mathfrak{b}L\cap (\mathfrak{b}\cap\mathfrak{p})x}
\]
Finally, since~$\mathfrak{b}\mathfrak{p}x\subset\mathfrak{b}L$, it follows that the natural surjection
\[
\mathfrak{b}\cap\mathfrak{p}
\rightarrow
\frac{(\mathfrak{b}\cap\mathfrak{p})x}{\mathfrak{b}L\cap (\mathfrak{b}\cap\mathfrak{p})x}
\]
has a kernel~$\mathfrak{a}$ containing~$\mathfrak{b}\mathfrak{p}$.
\end{proof}

\begin{lemma}\label{error_factor_control_lemma}
Let~$\mathfrak{p}\subset R_2$ be a non-zero prime ideal and set~$D=R_2/\mathfrak{p}$. Suppose~$D/\mathfrak{b}_\Lambda D$ is finite and~$\mathfrak{a}\subset R_2$ is an ideal satisfying~$\mathfrak{b}_\Lambda \mathfrak{p}\subset\mathfrak{a}\subset\mathfrak{b}_\Lambda \cap\mathfrak{p}$. Then ~$\ass((\mathfrak{b}_\Lambda \cap\mathfrak{p})/\mathfrak{a})\subset\ass(D/\mathfrak{b}_\Lambda D)$ and
\begin{enumerate}
\item\label{coheight_zero_error_factor} If~$\coht(\mathfrak{p})=0$, then~$|(\mathfrak{b}_\Lambda \cap\mathfrak{p})/\mathfrak{a}|=|R_2/\mathfrak{p}|^k$, where~$k\in\{0,1,2\}$, with~$k=0$ if~$\mathfrak{b}_\Lambda \not\subset\mathfrak{p}$.
\item \label{coheight_one_error_factor} If~$\coht(\mathfrak{p})=1$, then there exists a constant~$C$ depending only on~$\mathfrak{p}$
such that
\[
|(\mathfrak{b}_\Lambda \cap\mathfrak{p})/\mathfrak{a}|\leqslant C|D/\mathfrak{b}_\Lambda D|.
\]
\item \label{principal_prime_error_factor} If~$\mathfrak{p}=(p)$ for a rational prime~$p$, then~$|(\mathfrak{b}\cap\mathfrak{p})/\mathfrak{a}|=1$.
\end{enumerate}
\end{lemma}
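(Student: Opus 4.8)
The plan is to isolate one structural fact and run the three cases off it, the common assertion coming along for free. The fact is that $(\mathfrak{b}_\Lambda\cap\mathfrak{p})/\mathfrak{a}$ is a finitely generated module over $R_2/(\mathfrak{b}_\Lambda+\mathfrak{p})\cong D/\mathfrak{b}_\Lambda D$: multiplying an element of $\mathfrak{b}_\Lambda\cap\mathfrak{p}$ (which lies in $\mathfrak{b}_\Lambda$ and in $\mathfrak{p}$) by a generator of $\mathfrak{b}_\Lambda$, resp.\ of $\mathfrak{p}$, gives $(\mathfrak{b}_\Lambda+\mathfrak{p})(\mathfrak{b}_\Lambda\cap\mathfrak{p})\subseteq\mathfrak{b}_\Lambda\mathfrak{p}\subseteq\mathfrak{a}$, so $\mathfrak{b}_\Lambda+\mathfrak{p}$ annihilates the quotient. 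Since $D/\mathfrak{b}_\Lambda D$ is finite, hence Artinian, $\ass(D/\mathfrak{b}_\Lambda D)=V(\mathfrak{b}_\Lambda+\mathfrak{p})$, whence $\ass\bigl((\mathfrak{b}_\Lambda\cap\mathfrak{p})/\mathfrak{a}\bigr)\subseteq\supp\bigl((\mathfrak{b}_\Lambda\cap\mathfrak{p})/\mathfrak{a}\bigr)\subseteq V(\mathfrak{b}_\Lambda+\mathfrak{p})=\ass(D/\mathfrak{b}_\Lambda D)$. This proves the first assertion and shows the error factor is supported only at the finitely many maximal ideals over $\mathfrak{b}_\Lambda+\mathfrak{p}$.

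Cases (\ref{coheight_zero_error_factor}) and (\ref{principal_prime_error_factor}) are then short. If $\coht(\mathfrak{p})=0$ then $\mathfrak{p}$ is maximal and $R_2/\mathfrak{p}$ is a finite field; when $\mathfrak{b}_\Lambda\not\subseteq\mathfrak{p}$ we get $\mathfrak{b}_\Lambda+\mathfrak{p}=R_2$, so the quotient is a module over the zero ring and $k=0$, while when $\mathfrak{b}_\Lambda\subseteq\mathfrak{p}$ we get $\mathfrak{b}_\Lambda\cap\mathfrak{p}=\mathfrak{b}_\Lambda$ and $(\mathfrak{b}_\Lambda\cap\mathfrak{p})/\mathfrak{a}$ is a quotient of the $R_2/\mathfrak{p}$-vector space $\mathfrak{b}_\Lambda/\mathfrak{b}_\Lambda\mathfrak{p}=\mathfrak{b}_\Lambda\otimes_{R_2}(R_2/\mathfrak{p})$, which is spanned by the two generators of $\mathfrak{b}_\Lambda$ displayed in (\ref{pp_ideal_z2_form}), so $k\leqslant 2$. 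For (\ref{principal_prime_error_factor}), with $\mathfrak{p}=(p)$, the key point is that $R_2/\mathfrak{b}_\Lambda\cong\mathbb{Z}[\mathbb{Z}^2/\Lambda]$ is $\mathbb{Z}$-free, so multiplication by $p$ on it is injective, i.e.\ $(\mathfrak{b}_\Lambda:p)=\mathfrak{b}_\Lambda$; then $\mathfrak{b}_\Lambda\cap(p)=p(\mathfrak{b}_\Lambda:p)=p\mathfrak{b}_\Lambda=\mathfrak{b}_\Lambda\mathfrak{p}$, which forces $\mathfrak{a}=\mathfrak{b}_\Lambda\cap\mathfrak{p}$ and $|(\mathfrak{b}_\Lambda\cap\mathfrak{p})/\mathfrak{a}|=1$.

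The substantive case is (\ref{coheight_one_error_factor}), $\coht(\mathfrak{p})=1$. Here it is enough to bound $|(\mathfrak{b}_\Lambda\cap\mathfrak{p})/\mathfrak{b}_\Lambda\mathfrak{p}|$, which equals $|\operatorname{Tor}_1^{R_2}(R_2/\mathfrak{b}_\Lambda,D)|$. The generators $b_1=u_1^a-1$ and $b_2=u_1^bu_2^c-1$ of $\mathfrak{b}_\Lambda$ form a regular sequence in $R_2$ ($b_1$ is a non-zero-divisor, and $b_2$, viewed as a Laurent polynomial in $u_2$ over $R_2/(b_1)\cong\mathbb{Z}[\mathbb{Z}/a\mathbb{Z}][u_2^{\pm1}]$, has a unit leading coefficient, hence is a non-zero-divisor), so $\mathfrak{b}_\Lambda$ is a complete intersection, the Koszul complex of $b_1,b_2$ is a free resolution of $R_2/\mathfrak{b}_\Lambda$, and the relevant $\operatorname{Tor}$ modules are Koszul homology of $\bar b_1,\bar b_2$ with various coefficients. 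I would then pass to the integral closure $E$ of $D$, which is a Dedekind domain and a finite $D$-module: tensoring the Koszul resolution with $0\to D\to E\to E/D\to 0$ gives a short exact sequence of complexes whose homology long exact sequence yields
\[
\bigl|\operatorname{Tor}_1^{R_2}(R_2/\mathfrak{b}_\Lambda,D)\bigr|
\leqslant
\bigl|\operatorname{Tor}_2^{R_2}(R_2/\mathfrak{b}_\Lambda,E/D)\bigr|\cdot\bigl|\operatorname{Tor}_1^{R_2}(R_2/\mathfrak{b}_\Lambda,E)\bigr|.
\]
The first factor is a submodule of the degree-two Koszul term with coefficients in $E/D$, so is at most $|E/D|$; the second equals $|E/\mathfrak{b}_\Lambda E|$ by a direct Koszul computation in the Dedekind domain $E$ (writing $\bar b_iE=\mathfrak{d}\mathfrak{a}_i$ with $\mathfrak{d}=\mathfrak{b}_\Lambda E$ and $\mathfrak{a}_1+\mathfrak{a}_2=E$, the syzygy module of $(\bar b_1,\bar b_2)$ is $\cong\mathfrak{a}_2$ while the Koszul syzygy spans $\mathfrak{d}\mathfrak{a}_2$, giving $\operatorname{Tor}_1\cong\mathfrak{a}_2/\mathfrak{d}\mathfrak{a}_2$; the degenerate cases where some $\bar b_i$ vanishes are easy). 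Finally Lemma~\ref{integral_closure_trick} bounds $|E/\mathfrak{b}_\Lambda E|$ by $|E/D|\cdot|D/\mathfrak{b}_\Lambda D|$, so $C=|E/D|^2$ works and depends only on $\mathfrak{p}$. (Alternatively, localising at the maximal ideals over $\mathfrak{b}_\Lambda D$ and invoking the classical vanishing of the Euler characteristic of the Koszul complex of $\bar b_1,\bar b_2$ over the one-dimensional local rings $D_{\mathfrak{m}}$ gives, since $H_2=0$, the sharper identity $|(\mathfrak{b}_\Lambda\cap\mathfrak{p})/\mathfrak{b}_\Lambda\mathfrak{p}|=|D/\mathfrak{b}_\Lambda D|$, i.e.\ $C=1$.)

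The main obstacle is clearly case (\ref{coheight_one_error_factor}): the difficulty lies in finding the right homological packaging — identifying the error factor with $\operatorname{Tor}_1$, using that $\mathfrak{b}_\Lambda$ is a complete intersection to have the Koszul resolution available, and comparing $D$ with its normalisation through that complex — after which the Dedekind computation and Lemma~\ref{integral_closure_trick} close the argument; the long-exact-sequence bookkeeping and the degenerate vanishing subcases are the fiddly but routine parts.
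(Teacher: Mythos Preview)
Your proof is correct. The treatment of the associated-primes assertion and of cases~(\ref{coheight_zero_error_factor}) and~(\ref{principal_prime_error_factor}) is essentially the same as the paper's (the paper does not split case~(\ref{coheight_zero_error_factor}) into~$\mathfrak{b}_\Lambda\subset\mathfrak{p}$ versus~$\mathfrak{b}_\Lambda\not\subset\mathfrak{p}$, but the content is identical).

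For case~(\ref{coheight_one_error_factor}) your route is genuinely different. The paper works directly with the syzygy module~$K=\{(g_1,g_2)\in D\oplus D:g_1\bar b_1+g_2\bar b_2=0\}$, shows that~$|(\mathfrak{b}_\Lambda\cap\mathfrak{p})/\mathfrak{a}|$ divides~$|K/\mathfrak{b}_\Lambda K|$, identifies~$K$ with a colon ideal~$\mathfrak{c}\subset D$, and then bounds~$|\mathfrak{c}/\mathfrak{b}_\Lambda\mathfrak{c}|$ by passing to the integral closure~$E$ and using multiplicativity of ideal norms there together with Lemma~\ref{integral_closure_trick}. You instead recognise~$(\mathfrak{b}_\Lambda\cap\mathfrak{p})/\mathfrak{b}_\Lambda\mathfrak{p}$ as~$\operatorname{Tor}_1^{R_2}(R_2/\mathfrak{b}_\Lambda,D)$, use the Koszul resolution coming from the regular sequence~$b_1,b_2$, and compare~$D$ with~$E$ through the long exact sequence. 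Both arguments ultimately rely on the same two ingredients (the Dedekind property of~$E$ and Lemma~\ref{integral_closure_trick}), but your packaging is more homological and yields a clean constant~$C=|E/D|^2$; the paper's hands-on computation gives~$C=BA^{-2}$ in the notation of Lemma~\ref{integral_closure_trick}. Your parenthetical alternative via the vanishing Euler characteristic of the Koszul complex over the one-dimensional localisations~$D_{\mathfrak{m}}$ is a nice sharpening, producing the exact identity~$|(\mathfrak{b}_\Lambda\cap\mathfrak{p})/\mathfrak{b}_\Lambda\mathfrak{p}|=|D/\mathfrak{b}_\Lambda D|$, which neither the paper's argument nor your main argument obtains.
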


\begin{proof}
Using~(\ref{pp_ideal_z2_form}), we may assume~$\mathfrak{b}=\mathfrak{b}_\Lambda$ is of the form~$\mathfrak{b}=(b_1, b_2)$.
Furthermore, since~$\mathfrak{b}\mathfrak{p}\subset\mathfrak{a}$, all elements of~$(\mathfrak{b}\cap\mathfrak{p})/\mathfrak{a}$ are annihilated by~$\mathfrak{b}+\mathfrak{p}$. Therefore,
\[
\ass((\mathfrak{b}\cap\mathfrak{p})/\mathfrak{a})
\subset
\ass(R_2/(\mathfrak{b}+\mathfrak{p}))
=
\ass((R_2/\mathfrak{p})/\mathfrak{b}(R_2/\mathfrak{p}))
\]

The surjective module homomorphism~$R_2\oplus R_2 \rightarrow \mathfrak{b}/\mathfrak{a}$ given by
\[
f_1\oplus f_2\mapsto f_1 b_1 + f_2 b_2 + \mathfrak{a}
\]
has~$\mathfrak{p}\oplus\mathfrak{p}$ in the kernel as~$\mathfrak{b}\mathfrak{p}\subset\mathfrak{a}$. Hence, this map also naturally defines a surjective module homomorphism~$\phi:D\oplus D\rightarrow \mathfrak{b}/\mathfrak{a}$. Denote the images of~$b_1$ and~$b_2$ in~$D$ by~$\overline{b}_1$ and ~$\overline{b}_2$ respectively. If
\[
K=\{g_1\oplus g_2\in D\oplus D:g_1\overline{b}_1+g_2\overline{b}_2=0\},
\]
then~$\phi(K)$ is also the image of~$\mathfrak{b}\cap\mathfrak{p}$ in~$\mathfrak{b}/\mathfrak{a}$. That is,~$\phi(K)\cong (\mathfrak{b}\cap\mathfrak{p})/\mathfrak{a}$. Furthermore,~$\mathfrak{b}K\subset\ker(\phi)$, as~$\mathfrak{b}\mathfrak{p}\subset\mathfrak{a}$. Therefore,
\begin{equation}\label{if_you_ever}
|(\mathfrak{b}\cap\mathfrak{p})/\mathfrak{a}|\mbox{ divides }|K/\mathfrak{b}K|.
\end{equation}

If~$\coht(\mathfrak{p})=0$, then~$\mathfrak{p}$ is maximal and~$D$ is hence a finite field. Therefore,~$K/\mathfrak{b}K$ is a vector space of dimension at most~$2$ over~$D$, and hence the same is true for the homomorphic image~$(\mathfrak{b}\cap\mathfrak{p})/\mathfrak{a}$. Furthermore, since all elements of~$(\mathfrak{b}\cap\mathfrak{p})/\mathfrak{a}$ are annihilated by~$\mathfrak{b}+\mathfrak{p}$, if~$\mathfrak{b}\not\subset\mathfrak{p}$,~$\mathfrak{b}+\mathfrak{p}=R_2$ and so~$(\mathfrak{b}\cap\mathfrak{p})/\mathfrak{a}$ must be trivial as it is annihilated by~$R_2$. This gives~(\ref{coheight_zero_error_factor}).

Now assume~$\coht(\mathfrak{p})=1$. If~$\mathfrak{c}\subset D$ is a non-trivial ideal, then Lemma~\ref{integral_closure_trick} shows that there exist  constants~$A, B>0$ independent of~$\mathfrak{c}$ such that
\[
A|E/\mathfrak{c}E|\leqslant|D/\mathfrak{c}|\leqslant B|E/\mathfrak{c}E|
\]
where~$E$ is the integral closure of~$D$ in its field of fractions. Furthermore, we may exploit the favourable ideal arithmetic in~$E$, as~$E$ is a Dedekind domain.
By assumption, the image~$\overline{\mathfrak{b}}$ of~$\mathfrak{b}$ in~$D$ is non-trivial. This means that at most one of~$\overline{b}_1$ and~$\overline{b}_2$ is zero. Furthermore, if precisely one of~$\overline{b}_1$ or~$\overline{b}_2$ is zero, then~$K\cong D$ and~(\ref{coheight_one_error_factor}) follows immediately from~(\ref{if_you_ever}). Hence, assume~$\overline{b}_1\neq0$ and~$\overline{b}_2\neq 0$. In this case,~$K\cong\mathfrak{c}=\{g\in D:g\overline{b}_1\in(\overline{b}_2)\}$ which is a non-trivial ideal of~$D$. Hence,
\begin{equation}\label{change_your_mind}
\left|\frac{K}{\mathfrak{b}K}\right|
=\left|\frac{\mathfrak{c}}{\overline{\mathfrak{b}}\mathfrak{c}}\right|
=\frac{|D/\overline{\mathfrak{b}}\mathfrak{c}|}{|D/\mathfrak{c}|}
\leqslant
\frac{B|E/\overline{\mathfrak{b}}\mathfrak{c}E|}{|D/\mathfrak{c}|}
=
\frac{B|E/\overline{\mathfrak{b}}E||E/\mathfrak{c}E|}{|D/\mathfrak{c}|},
\end{equation}
where the final equality holds since~$E$ is a Dedekind domain. Since
\[
|E/\overline{\mathfrak{b}}E||E/\mathfrak{c}E|
\leqslant
A^{-2}|D/\overline{\mathfrak{b}}||D/\mathfrak{c}|,
\]
(\ref{if_you_ever}) and (\ref{change_your_mind}) give
\[
|(\mathfrak{b}\cap\mathfrak{p})/\mathfrak{a}|\leqslant|K/\mathfrak{b}K|\leqslant BAF,^{-2}|D/\overline{\mathfrak{b}}|,
\]
thus yielding~(\ref{coheight_one_error_factor}) with~$C=BA^{-2}$.

Finally, if~$\mathfrak{p}=(p)$ for a rational prime~$p$, then it is straightforward to show that~$\mathfrak{b}\cap\mathfrak{p}=\mathfrak{b}\mathfrak{p}$, so~(\ref{principal_prime_error_factor}) follows immediately (note that this is also shown more generally for principal ideals with a trivial complex unitary variety in~\cite[Lem.~7.6]{MR1062797}).
\end{proof}

\begin{remark}\label{a_red_red_rose}
From Lemma~\ref{error_factor_control_lemma}, it follows that if~$L\subset M$ are Noetherian~$R_2$-modules such that each~$\mathfrak{p}\in\ass(M)$ contains a rational prime, then there is a constant~$C\geq 0$ such that for all~$\Lambda\in\mathcal{L}_2$
\[
|M/\mathfrak{b}_\Lambda M|>C|L/\mathfrak{b}_\Lambda L|.
\]
This may be proved by induction as follows.
Firstly, note that by lifting a prime filtration of~$M/L$ to~$M$, we obtain a sequence of modules of the form~(\ref{module_chain}) with~$M_0=L$. Then,
$M_i/M_{i-1}\cong R_2/\mathfrak{p}_i$, where~$\mathfrak{p}_i$ is a prime ideal satisfying one of the three cases in Lemma~\ref{error_factor_control_lemma}. Furthermore, by Lemma~\ref{pp_count_induction_step_lemma}
\[
|M_i/\mathfrak{b}_\Lambda M_i|=
\left|\frac{R_2/\mathfrak{p}_i}{\mathfrak{b}_\Lambda(R_2/\mathfrak{p}_i)}\right|\frac{|M_{i-1}/\mathfrak{b}_\Lambda M_{i-1}|}
{|(\mathfrak{b}_\Lambda\cap\mathfrak{p}_i)/\mathfrak{a}_i|},
\]
where~$\mathfrak{a}_i\subset R_2$ is an ideal satisfying~$\mathfrak{b}_\Lambda \mathfrak{p}_i\subset\mathfrak{a}_i\subset\mathfrak{b}_\Lambda \cap\mathfrak{p}_i$. From Lemma~\ref{error_factor_control_lemma}, it follows that there is a constant~$C_i>0$, independent of~$\Lambda$, such that
\[
|M_i/\mathfrak{b}_\Lambda M_i|\geqslant
C_i|M_{i-1}/\mathfrak{b}_\Lambda M_{i-1}|
\]
Therefore,~$|M/\mathfrak{b}_\Lambda M|\geqslant \left(\prod_{i=1}^r C_i\right)|L/\mathfrak{b}_\Lambda L|$.
\end{remark}

\begin{lemma}\label{non_mixing_prime_lemma}
Suppose that~$L\subset M$ are Noetherian~$R_2$-modules such that each~$\mathfrak{p}\in\ass(M)$ contains a rational prime, and suppose that~$M/L\cong R_2/\mathfrak{q}$ for a prime ideal~$\mathfrak{q}\subset R_2$ which contains a polynomial of the form~$u^{\mathbf{n}}-1$,
~$\mathbf{n}\in\mathbb{Z}^d\setminus\{0\}$. If~$\alpha_M$ is mixing, then there exist constants~$A, B >0$ such that for all~$\Lambda\in\mathcal{L}_2$
\[
A|L/\mathfrak{b}_\Lambda L|
\leqslant
|M/\mathfrak{b}_\Lambda M|
\leqslant
B|L/\mathfrak{b}_\Lambda L|.
\]
\end{lemma}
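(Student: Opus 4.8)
The plan is to sandwich $M$ between $L$ and a submodule of $L$ that is isomorphic to $M$, and then to apply the comparison estimate of Remark~\ref{a_red_red_rose} twice. The first, and essentially only, substantive point is to use the mixing hypothesis to produce a non-zero-divisor on $M$ lying in $\mathfrak{q}$. Since $\alpha_M$ is mixing, the criterion recalled in \cite[Sec.~6]{MR1345152} gives $u^\mathbf{m}-1\notin\mathfrak{p}$ for every $\mathfrak{p}\in\ass(M)$ and every nonzero~$\mathbf{m}$. As $u^\mathbf{n}-1\in\mathfrak{q}$ with $\mathbf{n}\neq\mathbf{0}$, no associated prime of $M$ can contain~$\mathfrak{q}$ (such a prime would contain $u^\mathbf{n}-1$); since $\ass(M)$ is finite and the zero-divisors on $M$ form the union $\bigcup_{\mathfrak{p}\in\ass(M)}\mathfrak{p}$, prime avoidance then yields some $f\in\mathfrak{q}$ that is a non-zero-divisor on~$M$.

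I would then fix $x\in M$ lifting a generator of $M/L\cong R_2/\mathfrak{q}$, so that $M=L+R_2x$ and $\ann_{R_2}(x+L)=\mathfrak{q}$. Since $f\in\mathfrak{q}$ we have $fx\in L$, and therefore $fM=fL+R_2(fx)\subseteq L$. Multiplication by $f$ is an $R_2$-module isomorphism from $M$ onto $fM$, injective because $f$ is a non-zero-divisor on $M$, and it maps $\mathfrak{b}_\Lambda M$ onto $\mathfrak{b}_\Lambda(fM)$; hence $|M/\mathfrak{b}_\Lambda M|=|fM/\mathfrak{b}_\Lambda(fM)|$ for every $\Lambda\in\mathcal{L}_2$. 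Because $\ass(fM)=\ass(M)$ and $\ass(L)\subseteq\ass(M)$, each of $M$, $L$ and $fM$ has every associated prime containing a rational prime, so the relevant quotients are finite by \cite[Th.~6.5(3)]{MR1345152} and Remark~\ref{a_red_red_rose} applies to both inclusions $fM\subseteq L$ and $L\subseteq M$. Applying it to $L\subseteq M$ gives a constant $A>0$ with $A|L/\mathfrak{b}_\Lambda L|\leqslant|M/\mathfrak{b}_\Lambda M|$ for all $\Lambda$, and applying it to $fM\subseteq L$ gives a constant $A'>0$ with $A'|M/\mathfrak{b}_\Lambda M|=A'|fM/\mathfrak{b}_\Lambda(fM)|\leqslant|L/\mathfrak{b}_\Lambda L|$, so $B=1/A'$ serves as the upper constant.

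The step I expect to be the conceptual crux is exactly the extraction of $f$: the mixing hypothesis cannot be dropped, since for the split extension $M=L\oplus R_2/\mathfrak{q}$ (in which $\mathfrak{q}$ is an associated prime) one has $|M/\mathfrak{b}_\Lambda M|=|L/\mathfrak{b}_\Lambda L|\cdot\bigl|(R_2/\mathfrak{q})/\mathfrak{b}_\Lambda(R_2/\mathfrak{q})\bigr|$ with the last factor unbounded as $[\Lambda]\to\infty$. So the heart of the matter is the observation that, under mixing, a prime containing a polynomial $u^\mathbf{n}-1$ is so far from being associated to $M$ that $\mathfrak{q}$ must meet the non-zero-divisors on $M$; once that is in hand, the two estimates are a purely formal squeeze on top of Remark~\ref{a_red_red_rose}, and the remaining ingredients (the finiteness statements, the isomorphism $fM\cong M$, and the containment $fM\subseteq L$) are routine.
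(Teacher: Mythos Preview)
Your argument is correct and follows the same sandwich strategy as the paper: obtain the lower bound from Remark~\ref{a_red_red_rose} applied to~$L\subset M$, and the upper bound from Remark~\ref{a_red_red_rose} applied to~$fM\subset L$ together with the isomorphism~$fM\cong M$. The only difference is that the paper takes~$f=u^{\mathbf{n}}-1$ directly---your own first observation that~$u^{\mathbf{m}}-1\notin\mathfrak{p}$ for every~$\mathfrak{p}\in\ass(M)$ already shows this particular element is a non-zero-divisor on~$M$, so the prime-avoidance step is unnecessary.
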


\begin{proof}
First note that the lower estimate is provided immediately by Remark~\ref{a_red_red_rose}. For the upper estimate, let~$f=u^{\mathbf{n}}-1$ and consider the~$R_2$-module~$fM$. Since~$\alpha_M$ is mixing, multiplication by~$f$ on~$M$ is injective, so~$M$ and~$fM$ are isomorphic as~$R_2$-modules. On the other hand~$f\in\mathfrak{p}$, so~$fM\subset L$ and Remark~\ref{a_red_red_rose} again provides a constant~$C>0$ such that
\[
|L/\mathfrak{b}L|\geqslant
C|fM/\mathfrak{b}_\Lambda fM|=C|M/\mathfrak{b}_\Lambda M|.
\]
Dividing by~$C$ gives the required upper estimate.
\end{proof}

\begin{proof}[Proof of Th.~\ref{z2_pp_count_theorem}]
We begin by taking a prime filtration of the dual module~$M$ of the form~(\ref{module_chain}). Each prime~$\mathfrak{p}_i$ appearing in such a filtration  contains a rational prime, since~$X$ is zero-dimensional. If~$u^{\mathbf{n}}-1\not\in\mathfrak{p}_i$ for all~$\mathbf{n}\in\mathbb{Z}^d\setminus\{0\}$, we say that~$\mathfrak{p}_i$ is \emph{mixing}.
Note that whenever~$\mathfrak{p}_i\in\ass(M)$,~$\mathfrak{p}_i$ is mixing, as~$\alpha$~is mixing.
When~$\coht(\mathfrak{p}_i)=0$,~$\mathfrak{p}_i$ is maximal and~$R_2/\mathfrak{p}_i$ is a finite field, so~$\mathfrak{p}_i$ is not mixing and~$\mathfrak{p}_i\not\in\ass(M)$. Hence, each prime~$\mathfrak{p}_i$ is of one of the following three types.
\begin{enumerate}
\item \label{prime_type_principal}~$\mathfrak{p}_i=(p_i)$ for a rational prime~$p_i$ and~$\mathfrak{p}_i\in\ass(M)$,
\item \label{prime_type_non_principal_mixing}~$\coht(\mathfrak{p}_i)=1$ and~$\mathfrak{p}_i$ is mixing,
\item \label{prime_type_non_mixing}~$\coht(\mathfrak{p}_i)\leqslant1$ and~$\mathfrak{p}_i$ is not mixing.
\end{enumerate}

If~$\mathfrak{p}_i$ is of type (\ref{prime_type_principal}), a straightforward calculation shows that for any~$\Lambda\in\mathcal{L}_2$,
\[
\left|\frac{R_2/\mathfrak{p}_i}{\mathfrak{b}_\Lambda(R_2/\mathfrak{p}_i)}\right|
=
p_i^{[\Lambda]}
\]
and hence Lemma~\ref{pp_count_induction_step_lemma} and Lemma~\ref{error_factor_control_lemma}(\ref{principal_prime_error_factor}) give
\begin{equation}\label{type_1_induction_step}
|M_i/\mathfrak{b}_\Lambda M_i|
=p_i^{[\Lambda]}|M_{i-1}/\mathfrak{b}_\Lambda M_{i-1}|
\end{equation}

If~$\mathfrak{p}_i$ is of type (\ref{prime_type_non_principal_mixing}), then Lemma~\ref{pp_count_induction_step_lemma} and Lemma~\ref{error_factor_control_lemma}(\ref{coheight_one_error_factor}) show that there are constants~$A_i, B_i>0$ such that for all~$\Lambda\in\mathcal{L}_2$
\begin{equation}\label{type_2_induction_step}
A_i \leqslant |M_i/\mathfrak{b}_\Lambda M_i|
\leqslant
B_i
\left|\frac{R_2/\mathfrak{p}_i}{\mathfrak{b}_\Lambda(R_2/\mathfrak{p}_i)}\right|
|M_{i-1}/\mathfrak{b}_\Lambda M_{i-1}|.
\end{equation}

Finally, if~$\mathfrak{p}_i$ is of type (\ref{prime_type_non_mixing}), then Lemma~\ref{pp_count_induction_step_lemma} and Lemma~\ref{non_mixing_prime_lemma} show that
there are positive constants~$A_i, B_i>0$ such that for all~$\Lambda\in\mathcal{L}_2$
\begin{equation}\label{type_3_induction_step}
A_i |M_{i-1}/\mathfrak{b}_\Lambda M_{i-1}|
\leqslant
|M_i/\mathfrak{b}_\Lambda M_i|
\leqslant
B_i
|M_{i-1}/\mathfrak{b}_\Lambda M_{i-1}|.
\end{equation}

Recall that any prime of type~(\ref{prime_type_principal}) appears in the filtration~(\ref{module_chain}) with a fixed multiplicity, as given in the entropy formula~(\ref{entropy_formula}). Hence,  applying~(\ref{type_1_induction_step}),~(\ref{type_2_induction_step}) and~(\ref{type_3_induction_step}) inductively, the required result follows, where~$\beta=\beta_L$ is simply defined by an~$R_2$-module~$L$ which is a direct sum of the modules~$R_2/\mathfrak{p}_i$, where~$\mathfrak{p}_i$ runs through the primes of type~(\ref{prime_type_non_principal_mixing}).
\end{proof}

When combined with Corollary~\ref{minkowski_bound}, Theorem~\ref{z2_pp_count_theorem} has the following consequence which we shall apply to demonstrate the existence of a natural boundary for the dynamical zeta function. For the rest of this section, denote the multiplicative group~$\langle p:p\in\mathbb{P}(\alpha)\rangle\subset\mathbb{Q}^\times$ by~$G(\alpha)$ and note that all values~$\nfix_\alpha(\Lambda)$,~$\Lambda\in\mathcal{L}_2$, lie in~$G(\alpha)\cap\mathbb{N}$.

\begin{corollary}\label{pp_count_final_product}
Let~$\alpha$ be a mixing Noetherian~$\mathbb{Z}^2$-action by automorphisms of a compact abelian zero-dimensional group~$X$. Then there exist constants~$A\in G(\alpha)$,~$C>0$, and a function~$\psi:\mathcal{L}_2\rightarrow G(\alpha)\cap\mathbb{N}$ such that for all~$\Lambda\in\mathcal{L}_2$, ~$\psi(\Lambda)\leqslant C^{\sqrt{[\Lambda]}}$ and
\begin{equation}\label{pp_count_final_product_terms} \nfix_\alpha(\Lambda)=Ae^{\entropy[\Lambda]}\psi(\Lambda).
\end{equation}
\end{corollary}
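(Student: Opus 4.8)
The plan is to write $\nfix_\alpha(\Lambda)$ as an \emph{exact} product over a prime filtration of the dual module $M$, to peel off the factor $e^{\entropy[\Lambda]}$ contributed by the principal associated primes, and to show that what remains is a positive integer supported on $\mathbb{P}(\alpha)$ and of size at most $C^{\sqrt{[\Lambda]}}$. First I would fix a prime filtration $\{0\}=M_0\subset M_1\subset\cdots\subset M_r=M$ as in~(\ref{module_chain}), so that $M_i/M_{i-1}\cong R_2/\mathfrak{p}_i$; since $X$ is zero-dimensional, each $\mathfrak{p}_i$ contains a rational prime $p_i\in\mathbb{P}(\alpha)$, and $\coht(\mathfrak{p}_i)\le 2$ with $\coht(\mathfrak{p}_i)=2$ precisely when $\mathfrak{p}_i=(p_i)$. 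Applying Lemma~\ref{pp_count_induction_step_lemma} to each inclusion $M_{i-1}\subset M_i$ (each $M_i/\mathfrak{b}_\Lambda M_i$ is finite, as every associated prime of $M_i$ is one of the $\mathfrak{p}_j$ and hence contains a rational prime) and telescoping gives
\[
\nfix_\alpha(\Lambda)=|M/\mathfrak{b}_\Lambda M|=\prod_{i=1}^{r}\frac{\bigl|(R_2/\mathfrak{p}_i)/\mathfrak{b}_\Lambda(R_2/\mathfrak{p}_i)\bigr|}{\bigl|(\mathfrak{b}_\Lambda\cap\mathfrak{p}_i)/\mathfrak{a}_i\bigr|},
\]
where $\mathfrak{a}_i$ is the ideal produced by the lemma. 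Since $p_i\in\mathfrak{p}_i$, the numerator and denominator of the $i$-th factor are finite $\mathbb{F}_{p_i}$-vector spaces, so that factor equals $p_i^{e_i(\Lambda)}$ for some $e_i(\Lambda)\in\mathbb{Z}$.

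Next I would analyse these factors according to the coheight of $\mathfrak{p}_i$. When $\mathfrak{p}_i=(p_i)$, the numerator is $p_i^{[\Lambda]}$ and, by Lemma~\ref{error_factor_control_lemma}(\ref{principal_prime_error_factor}), the denominator is $1$; as $(p)$ appears in the filtration with multiplicity $m(p)$, the product over these indices is $\prod_{p\in\mathbb{P}(\alpha)}p^{m(p)[\Lambda]}=e^{\entropy[\Lambda]}$ by the entropy formula~(\ref{entropy_formula}). For every other $i$ we have $\coht(\mathfrak{p}_i)\le 1$, and parts~(\ref{coheight_zero_error_factor}) and~(\ref{coheight_one_error_factor}) of Lemma~\ref{error_factor_control_lemma} furnish a constant $C_i\ge 1$, depending only on $\mathfrak{p}_i$, with $|(\mathfrak{b}_\Lambda\cap\mathfrak{p}_i)/\mathfrak{a}_i|\le C_i\,|(R_2/\mathfrak{p}_i)/\mathfrak{b}_\Lambda(R_2/\mathfrak{p}_i)|$, whence $e_i(\Lambda)\ge-\log_{p_i}C_i$. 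Collecting exponents prime by prime, this gives $\nfix_\alpha(\Lambda)=e^{\entropy[\Lambda]}\prod_{p\in\mathbb{P}(\alpha)}p^{f_p(\Lambda)}$ with $f_p(\Lambda)\ge-N_p$ for integers $N_p\ge 0$ independent of $\Lambda$. Setting $A=\prod_{p}p^{-N_p}\in G(\alpha)$ and $\psi(\Lambda)=\prod_p p^{f_p(\Lambda)+N_p}$, we obtain $\psi(\Lambda)\in G(\alpha)\cap\mathbb{N}$ and $\nfix_\alpha(\Lambda)=A\,e^{\entropy[\Lambda]}\psi(\Lambda)$. For the size bound: since $\beta$ is mixing, every $\beta^{\mathbf{n}}$ with $\mathbf{n}\ne\mathbf{0}$ is ergodic, so Corollary~\ref{minkowski_bound} gives $\nfix_\beta(\Lambda)\le C_0^{\sqrt{2}\sqrt{[\Lambda]}}$; combined with $\nfix_\alpha(\Lambda)\le B\,e^{\entropy[\Lambda]}\nfix_\beta(\Lambda)$ from Theorem~\ref{z2_pp_count_theorem}, this yields $\psi(\Lambda)=A^{-1}e^{-\entropy[\Lambda]}\nfix_\alpha(\Lambda)\le A^{-1}B\,C_0^{\sqrt{2}\sqrt{[\Lambda]}}\le C^{\sqrt{[\Lambda]}}$ for a suitable $C>0$.

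The delicate point is the uniform lower bound $f_p(\Lambda)\ge-N_p$, equivalently the assertion that $\psi(\Lambda)$ really is an integer. It is not enough to know from Theorem~\ref{z2_pp_count_theorem} that $\nfix_\alpha(\Lambda)/e^{\entropy[\Lambda]}$ is bounded below by a positive constant: an element of $G(\alpha)$ confined to a fixed bounded interval may still have arbitrarily large denominator, because the additive subgroup $\sum_{p\in\mathbb{P}(\alpha)}\mathbb{Z}\log p$ of $\mathbb{R}$ is dense as soon as $|\mathbb{P}(\alpha)|\ge 2$. What makes integrality work is precisely the filtration-wise control of each error factor $|(\mathfrak{b}_\Lambda\cap\mathfrak{p}_i)/\mathfrak{a}_i|$ \emph{relative to its own numerator}, i.e.\ the full strength of Lemma~\ref{error_factor_control_lemma}, together with the exact vanishing of these factors at the principal primes; once this is in place, the estimate $\psi(\Lambda)\le C^{\sqrt{[\Lambda]}}$ is a routine consequence of the bounds already assembled.
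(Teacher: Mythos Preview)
Your argument is correct, but it takes a different route from the paper's. You telescope Lemma~\ref{pp_count_induction_step_lemma} along a prime filtration of~$M$ and then invoke Lemma~\ref{error_factor_control_lemma} factor by factor to bound each error term~$|(\mathfrak{b}_\Lambda\cap\mathfrak{p}_i)/\mathfrak{a}_i|$ against its own numerator; this is exactly what is needed to pin down the~$p$-adic denominator of~$\nfix_\alpha(\Lambda)/e^{\entropy[\Lambda]}$ and hence force~$\psi(\Lambda)\in\mathbb{N}$. The paper sidesteps this bookkeeping with a single structural observation: the torsion group~$X$ splits as~$\bigoplus_{p\in\mathbb{P}(\alpha)}X_p$ into~$\alpha$-invariant~$p$-primary pieces, so~$\nfix_\alpha(\Lambda)=\prod_p\nfix_{\alpha_p}(\Lambda)$ with each factor already a power of~$p$. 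Applying the lower bound of Theorem~\ref{z2_pp_count_theorem} to each~$\alpha_p$ separately then immediately shows that~$\nfix_{\alpha_p}(\Lambda)/p^{m(p)[\Lambda]}$ is a power of~$p$ bounded below, hence bounded below by a fixed power of~$p$; integrality of~$\psi$ follows without ever opening up the filtration. Your remark that the global lower bound in Theorem~\ref{z2_pp_count_theorem} is, by itself, insufficient (because of the density of~$\sum_p\mathbb{Z}\log p$) is on the mark, and the paper's primary decomposition is precisely the device that converts that global bound into the required prime-by-prime integrality. What your approach buys is an explicit filtration-level formula for~$\psi$; what the paper's approach buys is brevity and a cleaner separation of the integrality step from the analytic estimates.
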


\begin{proof}
Standard properties of torsion abelian groups show that we may write~$X=\oplus_{p\in\mathbb{P}(\alpha)}X_p$, where each~$X_p$ is a~$p$-group and is also~$\alpha$-invariant. Write~$\alpha_p$ for the restriction of~$\alpha$ to~$X_p$, so that for all~$\Lambda\in\mathcal{L}$
\begin{equation}\label{p_group_pp_product}
\nfix_\alpha(\Lambda)=\prod_{p\in\mathbb{P}(\alpha)}\nfix_{\alpha_p}(\Lambda).
\end{equation}
Theorem~\ref{z2_pp_count_theorem} shows that given any~$p\in\mathbb{P}(\alpha)$, there exists a constant~$A_p>0$ such that for all~$\Lambda\in\mathcal{L}_2$,~$\nfix_{\alpha_p}(\Lambda) \geqslant A_p\,p^{m(p)[L]}$,
where~$m(p)$ is as in~(\ref{entropy_formula}). Since~$\nfix_{\alpha_p}(\Lambda)$ is a power of~$p$, we can therefore assume~$A_p$ is a power of~$p$ and that there is a function~$\psi_p:\mathcal{L}_2\rightarrow \{p^n:n\geqslant 0\}$ such that
\[
\nfix_{\alpha_p}(\Lambda) = A_p\,p^{m(p)[L]}\psi_p(\Lambda).
\]
So, (\ref{pp_count_final_product_terms}) follows from~(\ref{p_group_pp_product}) and~(\ref{entropy_formula}) with~$A=\prod_{p\in\mathbb{P}(\alpha)}A_p$ and~$\psi=\prod_{p\in\mathbb{P}(\alpha)}\psi_p$. Furthermore, for each~$p\in\mathbb{P}(\alpha)$, the upper estimates provided by Theorem~\ref{z2_pp_count_theorem} and Corollary~\ref{minkowski_bound} show that there is a constant~$C_p>0$ such that for all~$\Lambda\in\mathcal{L}_2$
\[
\psi_p(\Lambda)\leqslant C_p^{\sqrt{[\Lambda]}}.
\]
The required upper estimate for~$\psi$ now follows with~$C=\prod_{p\in\mathbb{P}(\alpha)}C_p$.
\end{proof}

We now use Corollary~\ref{pp_count_final_product} and Theorem~\ref{main_result_restricted_on_primes} to prove Theorem~\ref{main_result_natural_boundary_z2}.
In order to convert an
implausibility into an impossibility, we will also need some elementary bounds
for the sum of divisors function~$\sigma$
along the lines of Gronwall's theorem~\cite{MR1500940}.
In addition to showing that
\[
\limsup_{n\to\infty}\frac{\sigma(n)}{n\log\log n}=e^{\gamma},
\]
Gronwall showed that the sequence~$(n_k)$
defined by
\[
n_k=(p_1p_2\cdots p_k)^{\lfloor\log p_n\rfloor},
\]
where~$p_1,p_2,\dots$ are the rational primes in their
natural order, has
\[
\lim_{k\to\infty}\frac{\sigma(n_k)}{n_k\log\log n_k}=e^{\gamma}.
\]
We need lower bounds infinitely often
along arithmetic progressions
for the same expression,
but require very little information about exact
values. We thank Stephan Baier for suggesting
how to do this, and Pieter Moree for further
suggestions on how the methods of Choie {\it{et al.}}~\cite[Sec.~4]{MR2394891}
giving explicit constants for the upper limit with~$n$ restricted to being
odd, square-free, square-full, and so on
should give a complete picture.

\begin{lemma}\label{lyingunderneaththestars}
Given integers~$t,q\ge1$ there is a constant~$C>0$
with
\[
\frac{\sigma(n)}{n\log\log n}\ge C
\]
for infinitely
many values of~$n$ congruent to~$t$ modulo~$q$.
\end{lemma}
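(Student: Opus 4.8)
The plan is to adapt the classical construction behind Gronwall's theorem so that it respects the congruence $n\equiv t\pmod q$, exploiting — as the discussion above anticipates — only crude features of $\sigma$ and never the sharp constant $e^{\gamma}$. First I would reduce to the case $\gcd(t,q)=1$. Write $d=\gcd(t,q)$, $t=dt^{*}$, $q=dq^{*}$; comparing $p$-adic valuations gives $\gcd(t^{*},q^{*})=1$, and since $d(n^{*}-t^{*})\equiv 0\pmod{dq^{*}}$ if and only if $n^{*}\equiv t^{*}\pmod{q^{*}}$, an integer $n=d\,n^{*}$ lies in the class $t$ modulo $q$ precisely when $n^{*}$ lies in the class $t^{*}$ modulo $q^{*}$. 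Because $\sigma(m)/m=\sum_{e\mid m}e^{-1}$ cannot decrease when $m$ is replaced by a multiple of itself, and $\log\log(d\,n^{*})\sim\log\log n^{*}$ as $n^{*}\to\infty$, a lower bound of the desired shape for $(t^{*},q^{*})$ transfers to $(t,q)$ with a smaller but still positive constant. So it suffices to treat the case $\gcd(t,q)=1$, which I now assume.

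Next I would build the test numbers. Let $p_{1}<p_{2}<\cdots$ enumerate the primes not dividing $q$ and put $N_{k}=p_{1}p_{2}\cdots p_{k}$, so that $\gcd(N_{k},q)=1$ and $N_{k}\bmod q\in(\mathbb{Z}/q)^{\times}$. By Dirichlet's theorem on primes in arithmetic progressions, each residue in $(\mathbb{Z}/q)^{\times}$ contains a prime exceeding every prime factor of $q$; taking the least such prime over the finitely many residues yields a bound $\Lambda=\Lambda(q)$ so that for every $k$ one may choose a prime $\ell_{k}\le\Lambda$, coprime to $q$, with $\ell_{k}N_{k}\equiv t\pmod q$. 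Set $n_{k}=\ell_{k}N_{k}$; since $n_{k}\ge N_{k}\to\infty$ these are infinitely many distinct integers all lying in the class $t$ modulo $q$.

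It then remains to estimate. From $N_{k}\mid n_{k}$ one gets $\sigma(n_{k})/n_{k}\ge\sigma(N_{k})/N_{k}=\prod_{i\le k}(1+p_{i}^{-1})$, and writing $1+p^{-1}=\tfrac{p}{p-1}(1-p^{-2})$ together with $\prod_{p\le y}\tfrac{p}{p-1}\ge\sum_{n\le y}n^{-1}\ge\log y$ yields $\sigma(N_{k})/N_{k}\ge c_{q}\log k$ for a constant $c_{q}>0$ (absorbing the factor $\phi(q)/q$ from the primes dividing $q$, the factor $\zeta(2)^{-1}$, and the bound $p_{k}>k$). For the denominator, $p_{k}\le C_{q}2^{k}$ (Bertrand's postulate, say) forces $\log N_{k}\ll_{q}k^{2}$, whence $\log\log n_{k}\le\log\log(\Lambda N_{k})\le 3\log k$ for all large $k$. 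Dividing, $\sigma(n_{k})/(n_{k}\log\log n_{k})\ge c_{q}/3$ for all large $k$, which, after undoing the reduction of the first paragraph, gives the asserted constant $C$.

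The hard part is conceptual rather than computational: reconciling the abundance mechanism with the congruence. Restricting to primes coprime to $q$ is what makes the residue of $N_{k}$ invertible, hence correctable by a single further prime, and one must confirm this restriction does not spoil the logarithmic size of $\prod(1+p^{-1})$ — it does not, since only finitely many primes are discarded. When $\gcd(t,q)>1$ no product of primes coprime to $q$ can have the right residue, which is exactly why the factoring-out step must come first; the only points genuinely needing care are the identity $\gcd(t^{*},q^{*})=1$ and the monotonicity of $\sigma(m)/m$ along divisibility, and both are routine.
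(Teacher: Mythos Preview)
Your proof is correct. Both your argument and the paper's follow the Gronwall template of building integers divisible by a primorial, but the mechanism for hitting the residue class differs. The paper works directly: with $n_k=\prod_{p\le k,\,p\nmid q}p$ it invokes the Chinese remainder theorem to find $N_k\le qn_k$ with $N_k\equiv 0\pmod{n_k}$ and $N_k\equiv t\pmod q$, which needs only $\gcd(n_k,q)=1$ and so requires no reduction to the coprime case and no appeal to Dirichlet. Your route---reduce to $\gcd(t,q)=1$, then correct the residue of $N_k$ by a single bounded prime $\ell_k$ supplied by Dirichlet---also works, but it trades an elementary CRT step for a deeper theorem and an extra preliminary reduction. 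The paper's estimate $\log N_k\asymp k$ (Chebyshev/Mertens) is sharper than your $\log N_k\ll k^{2}$ from Bertrand, though either suffices since only $\log\log N_k\ll\log k$ is needed.
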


\begin{proof}
Write~$p$ for a prime running through the set of primes
in their natural order.
For $k\ge1$ let
\[
n_k=\prod_{p\le k\atop p{\mathrel{\kern-2pt\not\!\kern3.5pt|}}q}p
\]
be the product of
all primes not dividing~$q$ and less than or equal to~$k$.
By the Chinese remainder theorem there is some~$N_k\le qn_k$
with~$N_k\equiv t$ modulo~${q}$ and~$N_k\equiv 0$ modulo~${n_k}$.
By construction
\[
\sigma(n_k)\ge N_k\prod_{p\le k\atop p{\mathrel{\kern-2pt\not\!\kern3.5pt|}}q}
\left(1+\textstyle\frac1p\right)
\sim C(q)\log k
\]
for some constant~$C(q)>0$
by Mertens' theorem. On the other hand,
\[
k\asymp\log n_k\asymp\log N_k,
\]
so
\[
\sigma(N_k)\gg N_k\log\log N_k
\]
for infinitely many~$k$ as required.
\end{proof}

\begin{proof}[Proof of Theorem~\ref{main_result_natural_boundary_z2}] By Corollary~\ref{pp_count_final_product}
\[
\zeta_\alpha(z)=
\exp\left(A\sum_{n\geqslant 1} a_n(e^\entropy z)^n\right)
\]
where~$a_n=\sum_{[L]=n}\psi(L)$.
There are exactly~$\sigma(n)$ subgroups~$\Lambda\in\mathcal{L}_2$ such that~$[\Lambda]=n$. Furthermore,
Corollary~\ref{pp_count_final_product} shows that~$\psi(\Lambda)\leqslant C^{\sqrt{[\Lambda]}}$, so~$a_n$ satisfies
\begin{equation}\label{bounds_for_taylor_coefficients}
\sigma(n)\leqslant a_n\leqslant \sigma(n)\cdot C^{\sqrt{[\Lambda]}},
\end{equation}
whereby~$\limsup_{n\rightarrow\infty}a_n^{1/n}=1$.
Thus the series~$F(z)=\sum_{n\geqslant 1} a_n z^n$ has radius
of convergence~$1$ and~$\zeta_\alpha$ has radius of convergence~$e^{-\entropy}$. Furthermore,
\[
F(e^{\entropy}z)=\frac{z\zeta_\alpha'(z)}{A\zeta_\alpha(z)},
\]
so if~$\zeta_\alpha$ has analytic continuation beyond the circle~$|z|=e^{-\entropy}$ then~$F$ has analytic continuation beyond the unit circle. We assume this to be the case for a contradiction.

Since~$F$ has integer coefficients and radius of convergence~1, the P{\'o}lya-Carlson theorem implies that~$F$ is a rational function. Therefore,~$(a_n)$ is an integer linear recurrence sequence.
Using~\cite[Th.~1.2]{MR1990179}, the sequence~$(a_n)$ may be decomposed into finitely many non-degenerate subsequences corresponding to a finite set of congruence classes,
as no subsequence can be identically zero
by~(\ref{bounds_for_taylor_coefficients}). Each such subsequence is
given by a linear recurrence with
characteristic roots that are algebraic integers,
as it is comprised of positive
integers.
None of these linear recurrence sequences can have a characteristic root of absolute
value
greater than~$1$ by~\cite[Th.~2.3]{MR1990179}
and the upper bound from~\eqref{bounds_for_taylor_coefficients}.
Now by Theorem~\ref{main_result_restricted_on_primes}
there is some integer constant~$A$ with
\[
a_p=A\sigma(p)=A(p+1)
\]
for infinitely many primes~$p$, so
by the pigeon-hole principle one of the
non-degenerate linear recurrence sequences, say~$(b_n=a_{qn+t})$
has integer constants~$A'$ and~$t'$ with infinitely many terms satisfying
\begin{equation}\label{equation:sherlockholmes}
b_n=a_{qn+t}=A'n+t'.
\end{equation}
The characteristic roots of the sequence~$(b_n)$
and their conjugates
are algebraic integers,
since the sequence is
integral, and have modulus no greater than~$1$.
By Kronecker's lemma~\cite[Th.~1.31]{MR1700272}, they must therefore be roots
of unity, so there can only be one since the
sequence is non-degenerate.
Thus, by~\cite[Eq.~(1.2)]{MR1990179}, we may write~$b_n=P(n)\omega^n$
for some polynomial~$P$ and root of unity~$\omega$.
On the other hand,~\eqref{equation:sherlockholmes}
shows that~$\vert P(n)\omega^n\vert\ll n$ for
infinitely many prime values of~$n$, so~$P$ is linear.
Therefore,~$b_n=A'n+t'$
for all~$n$, and so~$a_n=A''n+t''$ for
some constants~$A''$ and~$t''$ for all~$n$
lying in an arithmetic progression.
However, according to the lower bound in~(\ref{bounds_for_taylor_coefficients}) and Lemma~\ref{lyingunderneaththestars} this is impossible, thus giving the required contradiction.
\end{proof}

\bibliographystyle{plain}

\end{document}